\documentclass[oneside,11pt]{amsart}

\pdfoutput=1
\usepackage{amsmath}
\usepackage{amsthm}



\usepackage{amsfonts}
\usepackage{amssymb}
\usepackage{amsxtra}

\usepackage[arrow, tips, matrix]{xy}
\SelectTips{cm}{12}
\CompileMatrices

\newtheorem{thm}{Theorem}[section]
\newtheorem{prop}[thm]{Proposition}
\newtheorem{lem}[thm]{Lemma}
\newtheorem{cor}[thm]{Corollary}

\theoremstyle{definition}
\newtheorem{defn}[thm]{Definition}
\newtheorem{rem}[thm]{Remark}

\newcommand{\abs}[1]{\lvert{#1}\rvert}

\renewcommand{\bar}[1]{\overline{#1}}
\newcommand{\boundary}{\partial}

\newcommand{\presentation}[2]{\langle\, {#1} \mid {#2} \,\rangle}

\newcommand{\set}[2]{\{\,{#1} \mid {#2} \,\}}
\newcommand{\bigset}[2]{ \bigl\{ \, {#1} \bigm| {#2} \, \bigr\} }
\renewcommand{\emptyset}{\varnothing}

\newcommand{\field}[1]{\mathbb{#1}}

\newcommand{\R}{\field{R}}

\newcommand{\N}{\field{N}}

\newcommand{\Hyp}{\field{H}}

\newcommand{\PP}{\field{P}}

\newcommand{\inclusion}{\hookrightarrow}
\newcommand{\of}{\circ}

\DeclareMathOperator{\Isom}{Isom}
\DeclareMathOperator{\CAT}{CAT}

\DeclareMathOperator{\Stab}{Stab}

\newcommand{\ball}[2]{B ( {#1}, {#2} )}


\newcommand{\nbd}[2]{\mathcal{N}_{#2}({#1})}  
\newcommand{\bignbd}[2]{\mathcal{N}_{#2} \bigl( {#1} \bigr)}
\newcommand{\Set}[1]{\mathcal{#1}}

\DeclareMathOperator{\Cayley}{Cayley}
\DeclareMathOperator{\diam}{diam}
\DeclareMathOperator{\Sat}{Sat} 
\DeclareMathOperator{\Hull}{Hull}

\DeclareMathOperator{\SL}{SL}

\DeclareMathOperator{\join}{join}

\def\RomanianComma#1{\setbox0=\hbox{#1}{\ooalign{\hidewidth
    \lower1.2ex\hbox{$\mspace{1mu}^{,}$}\hidewidth\crcr\unhbox0}}}
\newcommand{\Drutu}{Dru{\RomanianComma{t}u}}

\hyphenation{geo-desic geo-des-ics quasi-convex quasi-convex-ity
             quasi-geo-desic quasi-geo-des-ics
             state-ment prop-o-si-tion equi-vari-ant equi-vari-antly
             pa-ram-e-trized Rie-mann-ian
         half-space half-spaces}


\usepackage{ifthen}

\newcommand{\showcomments}{yes}

\newsavebox{\commentbox}
%
{\ifthenelse{\equal{\showcomments}{yes}}%
{\footnotemark
    \begin{lrbox}{\commentbox}
    \begin{minipage}[t]{1.25in}\raggedright\sffamily\upshape\tiny
    \footnotemark[\arabic{footnote}]}
{\begin{lrbox}{\commentbox}}}%
{\ifthenelse{\equal{\showcomments}{yes}}%
{\end{minipage}\end{lrbox}\marginpar{\usebox{\commentbox}}}
{\end{lrbox}}}


\usepackage{pinlabel}


\begin{document}

\title[Relative hyperbolicity for countable groups]{Relative
hyperbolicity and relative quasiconvexity for countable groups}

\author{G. Christopher Hruska$^{\dag}$}
\address{Dept.\ of Mathematical Sciences\\
University of Wisconsin--Milwaukee\\
P.O.~Box 413\\
Milwaukee, WI 53201\\
USA}
\email{chruska@uwm.edu}
\thanks{$^{\dag}$ Research supported by NSF grants DMS-0505659 and DMS-0731759.}

\date{\today}

\begin{abstract}
We lay the foundations for the study of relatively quasiconvex subgroups
of relatively hyperbolic groups.  These foundations require that we first
work out a coherent theory of countable relatively hyperbolic groups
(not necessarily finitely generated).  We prove the equivalence
of Gromov, Osin, and Bowditch's definitions of relative hyperbolicity
for countable groups.

We then give several equivalent definitions of relatively quasiconvex subgroups
in terms of various natural geometries on a relatively hyperbolic group.
We show that each relatively quasiconvex subgroup is itself
relatively hyperbolic, and that the intersection
of two relatively quasiconvex subgroups is again relatively quasiconvex.
In the finitely generated case, we prove that every undistorted
subgroup is relatively quasiconvex,
and we compute the distortion of a finitely generated
relatively quasiconvex subgroup.
\end{abstract}

\subjclass[2000]{%
20F67, 
20F65} 

\maketitle

\section{Introduction}
\label{sec:Introduction}

Gromov introduced the theory of hyperbolic groups in \cite{Gromov87}.
In this theory, the quasiconvex subgroups play a central
role.  They are geometrically the simplest and most natural subgroups:
those whose intrinsic geometry (the word metric with respect to a
finite generating set) is preserved under the embedding
into the hyperbolic group.
The two most fundamental properties of quasiconvex subgroups of a hyperbolic
group are the following.
\begin{enumerate}
\item Each quasiconvex subgroup is itself word hyperbolic.
\item The intersection of two quasiconvex subgroups is quasiconvex.
\end{enumerate}

In the same article, Gromov also introduced relatively hyperbolic groups.
In the present article, we lay the foundations for a theory of
``relatively quasiconvex'' subgroups, which are expected to play
a central role in the theory of relatively hyperbolic groups.
There are several equivalent ways to formulate this idea.
Two of these formulations were introduced by Dahmani and Osin
\cite{Dahmani03Combination,Osin06} in special cases.
Indeed Osin's definition has already been used in work of
Mart{\'\i}nez-Pedroza \cite{MartinezPedroza09,MartinezPedroza_QCNote}
and Manning--Mart{\'\i}nez-Pedroza \cite{ManningMartinezPedroza_Separation}.

Yet Dahmani's and Osin's definitions were not previously known to be 
equivalent, and neither of these notions was known to satisfy both of the
fundamental properties above.
In addition, no criterion was previously known for relative quasiconvexity
using the ``intrinsic'' geometry of geodesics in the Cayley graph
(for a finite generating set).
In this article, we give several definitions of relative quasiconvexity,
prove their equivalence, and use them to prove
many properties of such subgroups.
In particular, we show that relatively quasiconvex subgroups
satisfy analogues of the two fundamental properties listed above.

\subsection{Non--finitely generated groups and relative hyperbolicity}
Historically
the main conceptual difficulty to forming a satisfying theory of relative
quasiconvexity has been the necessity of considering
non--finitely generated groups.  Relatively hyperbolic groups have been defined
in different ways by Gromov, Farb, and \Drutu--Sapir
\cite{Gromov87,Farb98,DrutuSapirTreeGraded}.
Three different model geometries arise in these three definitions.
The geometry in Gromov's definition can be obtained by attaching
``horoballs'' to the peripheral subgroups.
This definition generalizes the fundamental group of a finite volume
hyperbolic manifold.
The geometry in Farb's definition (of strong relative hyperbolicity)
is obtained by collapsing each peripheral subgroup to a set of bounded diameter.
This definition generalizes the structure of a free product
acting on its Bass--Serre tree.
The geometry in \Drutu--Sapir's definition is the ``intrinsic''
geometry of the word metric with respect to a finite generating set.
This definition generalizes the geometry of a $\CAT(0)$ space with isolated 
flats.
These model geometries have substantially different flavors.

Farb's and \Drutu--Sapir's definitions each require that the group in question
be finitely generated.  However, Gromov's definition
requires only that the group be countable.
(It must act properly discontinuously on a proper metric space.)
In the setting of finitely generated groups, these three definitions are
equivalent. (See Bowditch \cite{BowditchRelHyp} and \Drutu--Sapir 
\cite{DrutuSapirTreeGraded} for details.)

Bowditch and Osin have given variants of Farb's definition that do not
require finitely generated groups \cite{BowditchRelHyp,Osin06}.
(They are variants in the sense
that they essentially use the same model geometry introduced by Farb.)
The exact relation between these definitions has not been clear.
Many researchers have concluded that finite generation should be part
of the definition of relative hyperbolicity.
(Gromov and Osin are notable exceptions to this trend.)
Indeed, no version of \Drutu--Sapir's definition is known
for non--finitely generated groups.

Yet any natural definition of relatively quasiconvex subgroups
will include non--finitely generated groups.
Additionally, the intersection of two finitely generated relatively
quasiconvex subgroups is often not finitely generated.
Thus in order to formulate a natural theory of relative hyperbolicity,
we must allow non--finitely generated relatively hyperbolic groups.

The following theorem reconciles the various existing definitions
into a unified theory of relative hyperbolicity that includes all
of the above examples.  See Section~\ref{sec:RelHypDefn} for precise
statements of the various definitions of relative hyperbolicity.

\begin{thm}
If $G$ is countable and $\PP$ is a finite collection of infinite
subgroups of $G$, then the definitions of relative hyperbolicity
for $(G,\PP)$ given by Gromov, Bowditch, and Osin are equivalent.
\end{thm}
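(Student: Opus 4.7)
The plan is to establish a cycle of implications Osin $\implies$ Bowditch $\implies$ Gromov $\implies$ Osin, taking care that none of the auxiliary spaces we build requires $G$ to be finitely generated. Throughout, the relative generating set supplied by Osin's definition together with $\bigcup_{P \in \PP} P$ plays the role that a finite generating set plays in the classical case; all of the Cayley graphs, cusped spaces, and coned-off graphs produced during the proof may have infinite valence at vertices arising from infinite peripheral cosets, but will be locally finite away from such vertices.

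\textbf{Osin $\implies$ Bowditch.} Assume $(G,\PP)$ is relatively hyperbolic in Osin's sense: there is a (possibly infinite) relative generating set $S$ such that $\Cayley(G, S \cup \bigcup\PP)$ is hyperbolic and the pair $(G,\PP)$ satisfies the bounded coset penetration property. I would form Bowditch's coned-off graph $\hat K$, whose vertex set is $G \sqcup \bigset{gP}{g \in G, P \in \PP}$, with edges joining $g$ to each $gs$ for $s \in S$ and joining $g$ to each coset $gP$ containing it. Hyperbolicity of $\hat K$ follows from quasi-isometry with the relative Cayley graph. The core task is fineness: I would show that each edge of $\hat K$ lies in only finitely many embedded circuits of any given length. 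The embedded circuits through an edge at a peripheral vertex $gP$ correspond, via bounded coset penetration, to pairs of entry/exit points of relative geodesics through $gP$; the BCP property bounds the number of such configurations, which yields fineness. Edge and non-peripheral vertex stabilisers are finite because they inject into $G$ via a bounded region of the Cayley graph.

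\textbf{Bowditch $\implies$ Gromov.} Starting from a fine hyperbolic graph $K$ with finite edge stabilisers, finitely many orbits of edges, and infinite-valence vertex stabilisers precisely the conjugates of the $P \in \PP$, I would attach a Groves--Manning style combinatorial horoball to each infinite-valence vertex. Call the result $X$. One needs (i) $X$ is $\delta$-hyperbolic and (ii) $X$ is proper. Hyperbolicity is the standard horoball-attaching argument, isolating each horoball so that thin triangles remain thin. Properness is the delicate point; this is where I expect the principal obstacle to lie, because $K$ itself is typically not locally finite at peripheral vertices. Fineness of $K$ saves the day: the fineness hypothesis implies that after replacing each infinite-valence vertex by its horoball, closed metric balls in $X$ are finite, since only finitely many edges of $K$ contribute to any bounded neighbourhood of a horoball level. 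The action of $G$ on $X$ is then properly discontinuous with the conjugates of the $P$ as maximal parabolic subgroups, as required by Gromov's definition.

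\textbf{Gromov $\implies$ Osin.} Given a properly discontinuous action of $G$ on a proper hyperbolic space $X$ of the appropriate type, I would choose horoball neighbourhoods $H_P$ of each parabolic fixed point, equivariantly, with pairwise disjoint $G$-translates, and pick a basepoint $x_0$ outside the horoballs. Set $Y = X - \bigsqcup gH_P$ and let $S$ be a set of representatives for the (countably many) $G$-orbits of adjacencies between translates of a compact fundamental domain for $G$ acting on $Y$. Then $S \cup \bigcup\PP$ generates $G$ relatively and the relative Cayley graph is quasi-isometric to $X$ with horoballs collapsed, hence hyperbolic. Bounded coset penetration follows from the Morse lemma in $X$: any two quasi-geodesics with the same endpoints that enter and leave the same horoball must do so at uniformly close points.

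In each step the work is essentially geometric and parallels the finitely generated theory; the genuinely new ingredient is checking that the ``locally finite modulo peripherals'' structure is preserved through each construction, since this is what allows properness in the Gromov model despite the relative generating set being infinite.
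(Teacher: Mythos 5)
There is a genuine gap, and it starts with the identification of the definitions. What you call ``Osin's definition'' (hyperbolic coned-off Cayley graph plus bounded coset penetration) is Farb's definition; Osin's definition is the existence of a \emph{finite relative presentation} whose relative Dehn function is well-defined and linear. Your cycle never touches the isoperimetric characterization, so even if it closed it would prove equivalence of the Gromov, fine-graph, and Farb formulations but not the statement at hand. Bridging from the Dehn-function definition to the cusp-uniform one is where the paper does its real new work: one builds an augmented space by attaching Groves--Manning combinatorial horoballs along peripheral cosets, where each $P_i$ is metrized by a \emph{proper left-invariant metric} (not a word metric, which need not exist), and shows this space is proper, hyperbolic, and has the Cayley graph for a finite relative generating set as a truncated space. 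Relatedly, you allow a possibly infinite relative generating set; all the definitions in play require a finite one, and for non--finitely generated $G$ even the independence of the BCP condition from the choice of finite relative generating set is nontrivial (the Cayley graph for a non-generating set is disconnected), so this cannot be waved through.

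The second serious problem is your ``Bowditch $\implies$ Gromov'' step. The implication from the fine-graph definition to the cusp-uniform definition is precisely the one whose known proof uses finite generation in an essential way and is not known to extend to countable groups; a careful treatment routes around it (proving fine graph $\implies$ BCP $\implies$ linear relative Dehn function $\implies$ cusp uniform instead). Your justification of properness --- ``fineness saves the day \dots only finitely many edges of $K$ contribute to any bounded neighbourhood of a horoball level'' --- is exactly the point that fails to be obvious: a fine graph still has infinite-valence vertices, and to build a \emph{proper} horoball over the coset $gP$ you need a proper $P$-invariant metric on that coset together with an argument that the resulting space is proper and that $G$ still acts cocompactly off the horoballs. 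The graph structure alone does not hand you this, and no argument is supplied. Finally, the theorem's ``Bowditch'' definitions also include the geometrically finite convergence-group formulations, whose equivalence with the cusp-uniform definition requires Yaman's theorem and Bowditch's geometrically-finite-iff-cusp-uniform theorem; your proposal does not address these at all.
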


In fact, there are numerous examples of non--finitely generated
relatively hyperbolic groups:
\begin{itemize}
\item A free product $A*B$ where $A$ or $B$ is not finitely generated
is relatively hyperbolic with respect to the factors.
\item A nonuniform lattice $\Gamma$ in a rank one Lie
group $G$ over a nonarchimedian local field is relatively hyperbolic with respect
to its parabolic subgroups by Lubotzky \cite{Lubotzky91}.
These lattices are never finitely generated.
A typical example is the lattice
$\Gamma = \SL_2 \bigl( \mathbb{F}_q[t] \bigr)$
in the group $G = \SL_2 \bigl( \mathbb{F}_q((1/t)) \bigr)$.
In this case $\Gamma$ acts on the
Bruhat--Tits tree for $G$ with quotient a ray.
\item Many relatively quasiconvex subgroups of relatively hyperbolic groups
are not finitely generated.
\end{itemize}

\subsection{Relative quasiconvexity}
As mentioned above, two different definitions of a relatively
quasiconvex subgroup $H$ of a relatively hyperbolic group $G$
have been introduced by Dahmani and Osin
in the special case that $G$ is finitely generated.
Dahmani's ``dynamical quasiconvexity'' is defined in terms of
the dynamics at infinity in Gromov's model
geometry.  Osin's ``relative quasiconvexity'' is defined in terms
of Farb's ``coned--off'' model geometry and makes sense only for
subgroups of a finitely generated relatively hyperbolic group.
Dahmani's definition immediately implies that a dynamically
quasiconvex subgroup is itself relatively hyperbolic.
On the other hand, Mart{\'\i}nez-Pedroza
used Osin's definition to prove that the intersection of two relatively
quasiconvex subgroups is again relatively quasiconvex
\cite{MartinezPedroza09}.
Osin asked whether the two definitions are equivalent, and also asked whether
all relatively quasiconvex subgroups are relatively hyperbolic
(using his definitions) \cite{Osin06}.

In this article, we clarify the notion of relatively quasiconvex subgroups
by giving criteria for relative quasiconvexity
in terms of the model geometries of Gromov, Farb, and \Drutu--Sapir.
(Recall that \Drutu--Sapir's geometry, the word metric for a finite generating set, is defined only when $G$ is finitely generated.)
When $G$ is finitely generated, these subgroups coincide with those studied by Dahmani and Osin.
A significant part of this paper involves showing that
various definitions of relative quasiconvexity are equivalent.
This equivalence has several consequences.

\begin{thm}
\label{thm:QCBasicProps}
Let $G$ be a countable group that is relatively hyperbolic with respect
to a finite family of subgroups $\PP=\{P_1,\dots,P_n\}$.
\begin{enumerate}
\item If $H\le G$ is relatively quasiconvex, then $H$ is relatively
hyperbolic with respect to a natural induced collection of subgroups.
\item If $H_1,H_2 \le G$ are relatively quasiconvex, then
$H_1 \cap H_2$ is also relatively quasiconvex.
\end{enumerate}
\end{thm}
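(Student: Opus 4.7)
The plan is to leverage the unified framework established by Theorem 1.1 and the equivalence of the various formulations of relative quasiconvexity. Throughout, fix Gromov's cusped space $X$ for $(G,\PP)$: a proper, $\delta$-hyperbolic space obtained by attaching a combinatorial horoball to each left coset of each $P_i$ in a Cayley graph of $G$. By hypothesis $G$ acts properly on $X$, and each point of $\partial X$ is either conical or bounded parabolic for $G$.

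For part (1), I would work dynamically. Let $\Lambda H \subset \partial X$ denote the limit set of $H$. The main observation is that relative quasiconvexity of $H$ (in any of the equivalent formulations) forces every $\xi \in \Lambda H$ to be either conical or a bounded parabolic fixed point for the $H$-action. Define the induced peripheral family $\PP_H$ to be an $H$-conjugacy transversal of the infinite subgroups of the form $H \cap gP_ig^{-1}$; equivalently, the $H$-stabilizers of the $H$-orbits of parabolic points in $\Lambda H$. I must verify that $\PP_H$ is finite, which follows from a cocompactness-modulo-horoballs argument applied to the quasiconvex hull of $\Lambda H$: $H$ admits only finitely many orbits of parabolic points. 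Attaching horoballs to a Cayley graph of $H$ along the cosets of $\PP_H$ produces a candidate cusped space $X_H$, and a coarse $H$-equivariant embedding $X_H \hookrightarrow X$ transports hyperbolicity and horoball structure from $X$. Applying the direction of Theorem 1.1 that recognizes Gromov's picture then yields that $(H,\PP_H)$ is relatively hyperbolic.

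For part (2), I would adapt Mart\'\i nez-Pedroza's intersection argument to the countable setting, phrased in the cusped space rather than the coned-off Cayley graph. Let $\gamma$ be a geodesic in $X$ between two points of $(H_1 \cap H_2) \cdot o$, where $o$ is a basepoint. Relative quasiconvexity of $H_i$ gives a constant $\sigma$ so that every vertex of $\gamma$ lies within $\sigma$ of both $H_1 \cdot o$ and $H_2 \cdot o$. The heart of the argument is a \emph{double-coset lemma}: there is a constant $R = R(G,\PP,H_1,H_2,\sigma)$ such that whenever $h_1 \in H_1$, $h_2 \in H_2$ satisfy $d_X(h_1,h_2) \le 2\sigma$, there exists $h \in H_1 \cap H_2$ with $d_X(h_1,h) \le R$. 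Granting this, a fellow-traveller construction along $\gamma$ assembles the resulting $h$'s into a track in $(H_1 \cap H_2) \cdot o$ that uniformly shadows $\gamma$, establishing the required quasiconvexity.

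The main obstacle in both parts is the absence of local finiteness in $X$ at horoball boundary points; the substitute is the \emph{fineness} of Bowditch's hyperbolic graph $K$ from Theorem 1.1. Fineness is precisely what forces $\PP_H$ to contain finitely many $H$-conjugacy classes in (1), by bounding the number of $H$-orbits of parabolic vertices in the hull of $\Lambda H$. It also drives the double-coset lemma in (2): pairs $(h_1,h_2) \in H_1 \times H_2$ with $h_1^{-1}h_2$ lying in a fixed ball split into only finitely many $(H_1 \cap H_2)$-orbits, because only finitely many edges of $K$ can connect a given vertex into a bounded subgraph. I expect this fineness-based finiteness to be the technical crux around which the proof is organized; once it is in place, the remaining steps are parallel to the now-classical word-hyperbolic arguments transplanted to the cusped-space framework.
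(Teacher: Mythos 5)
Your overall strategy is close to the paper's, but two of your load-bearing claims are either misstated or attributed to the wrong mechanism. For part (1) the paper's proof is essentially immediate from definition (QC-1): relative quasiconvexity is \emph{defined} (in that formulation) as geometrical finiteness of the $H$-action on $\Lambda H$, which is (RH-1) for $H$; so your ``main observation'' is the definition rather than something to be forced out of the cusped space. The actual content, which your sketch passes over, is the identification of the maximal parabolic subgroups of $H$ acting on $\Lambda H$ with the infinite subgroups $H\cap gP_ig^{-1}$. This requires Tukia's theorem that a point cannot be simultaneously a conical limit point and a parabolic point for the same convergence action (otherwise a point of $\Lambda H$ could be conical for $G$ yet parabolic for $H$, giving ``new'' peripherals not of the stated form). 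Likewise, finiteness of the number of $H$-conjugacy classes is obtained in the paper by citing Tukia's general finiteness theorem for geometrically finite convergence actions, not by a fineness argument on Bowditch's graph; your alternative route through cocompactness of the truncated hull (i.e.\ (QC-2)) would also work, but the construction of a cusped space $X_H$ with a coarse embedding into $X$ is unnecessary once you notice that (RH-1) is already in hand.

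For part (2), the statement ``every vertex of $\gamma$ lies within $\sigma$ of both $H_1\cdot o$ and $H_2\cdot o$'' is false for a geodesic $\gamma$ in the cusped space $X$: relative quasiconvexity ((QC-3)/(QC-4)) only controls $\gamma\cap(X-U)$, and the portion of $\gamma$ deep inside a horoball can be arbitrarily far from the orbit (only \emph{strongly} relatively quasiconvex subgroups have orbits quasiconvex in all of $X$). The repair is harmless --- you only need to show $\gamma\cap(X-U)\subseteq\nbd{(H_1\cap H_2)o}{\mu}$, so restrict attention to vertices in the truncated part --- but as written the claim is wrong. Your double-coset lemma is exactly the paper's Proposition~\ref{prop:CloseCosets}, and it is driven by \emph{properness} (a ball in a proper left-invariant metric on $G$, or in the proper space $X$, contains finitely many orbit points, so a pigeonhole on the ``offsets'' $p_i,q_i$ produces an element of $H_1\cap H_2$ nearby), not by fineness of Bowditch's graph; no fellow-traveller assembly is needed afterwards, since the pointwise estimate on each vertex of $\gamma\cap(X-U)$ is already the definition of relative quasiconvexity for $H_1\cap H_2$. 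The paper runs this same argument in the relative Cayley graph using (QC-5), which avoids the horoball subtlety entirely.
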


In the manifold setting, our characterization of relative quasiconvexity
has the following corollary.

\begin{cor}
\label{cor:GeomFiniteManifolds}
Let $G$ be a geometrically finite subgroup of $\Isom(X)$
for $X$ a complete, simply connected manifold with pinched negative curvature.
Then $H\le G$ is relatively quasiconvex
\textup{(}with respect to the maximal parabolic
subgroups of $G$\textup{)} if and only if $H$ is geometrically finite.

Furthermore, if $H,K \le G$ are geometrically finite then $H \cap K$
is also geometrically finite.
\end{cor}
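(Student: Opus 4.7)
The plan is to leverage Bowditch's work on geometrically finite groups acting on pinched negatively curved manifolds, combined with the equivalence of definitions (Theorem~1.1) and the preservation properties (Theorem~1.2) already established in the paper.

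I would begin by invoking Bowditch's theorem that a geometrically finite subgroup $G\le \Isom(X)$ is relatively hyperbolic (in Gromov's sense) with respect to conjugacy representatives of its maximal parabolic subgroups. Theorem~1.1 then hands this relatively hyperbolic structure back in every equivalent formulation used in the paper. A Gromov model space for $(G,\PP)$ can be taken, up to $G$-equivariant quasi-isometry, to be the truncated manifold $\hat X$ obtained by excising a disjoint $G$-invariant family of open horoballs centered at the parabolic fixed points of $G$ in $\partial X$. Under this identification, the Gromov boundary of the cusped space is equivariantly identified with the limit set $\Lambda G\subset \partial X$, and the convergence dynamics on the two boundaries agree.

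For the main equivalence, I would work through the dynamical (Dahmani-style) formulation of relative quasiconvexity. If $H\le G$ is relatively quasiconvex, the dynamical formulation yields a geometrically finite convergence action of $H$ on $\Lambda H$; Bowditch's dynamical characterization of geometric finiteness in pinched negative curvature then upgrades this to geometric finiteness of $H$ in $\Isom(X)$. Conversely, if $H$ is geometrically finite in $\Isom(X)$, its convergence action on $\Lambda H$ is geometrically finite, which translates back through the identification above to dynamical quasiconvexity, hence to relative quasiconvexity in the sense of the paper. I would also verify that the induced peripheral structure on $H$ supplied by Theorem~1.2(1) really does coincide with the set of maximal parabolic subgroups of $H$ in $\Isom(X)$; this should follow by identifying both families with the stabilizers in $H$ of the bounded parabolic points of $\Lambda H$.

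The intersection statement is then immediate from Theorem~1.2(2) combined with the equivalence just established: geometrically finite $H,K$ become relatively quasiconvex subgroups of $G$, their intersection is relatively quasiconvex by Theorem~1.2(2), and the equivalence applied in reverse yields that $H\cap K$ is geometrically finite in $\Isom(X)$. The main obstacle will be the middle step, where one must carefully match the peripheral data on the two sides of the equivalence. The boundary identification and dynamical characterization of geometric finiteness are due to Bowditch, but the bookkeeping still requires care, since the peripheral subgroups of $H$ given abstractly by the induced structure are not a priori parabolic as isometries of $X$.
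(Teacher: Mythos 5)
Your proposal is correct and follows essentially the same route as the paper: the first assertion is read off from the dynamical definition (QC-1) together with the Beardon--Maskit/Bowditch characterization of geometric finiteness via the action on the limit set, and the intersection statement is exactly Theorem~\ref{thm:QCBasicProps}(2) (i.e.\ Corollary~\ref{cor:Intersections}) transported through that equivalence. The bookkeeping you flag --- identifying $\Lambda H$ inside the model compactum with the limit set in $\partial X$ and matching the peripheral data --- is precisely the content the paper leaves implicit, and your outline handles it correctly.
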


In the special case when $X$ is the real hyperbolic space $\Hyp^n$,
the second claim of the preceding corollary
is due to Susskind--Swarup \cite{SusskindSwarup92}.
See also Corollary~\ref{cor:GeomFiniteKleinian} below,
which strengthens the first conclusion in the real hyperbolic case.
The preceding corollary appears to be new even in the special case of
complex hyperbolic manifolds.

If $G$ is relatively hyperbolic, $H\le G$ is relatively quasiconvex,
and both $H$ and $G$ are finitely generated, we compute
the distortion of $H$ in $G$,
which measures the difference between the word metric
on $H$ and the word metric on $G$.  See Theorem~\ref{thm:Distortion}
for a more precise statement.

\begin{thm}
\label{thm:DistortionVague}
Let $G$ be relatively hyperbolic with respect to $P_1,\dots,P_n$.
Suppose $H \le G$ is relatively
quasiconvex, and both $H$ and $G$ are finitely generated.
Then the distortion
of $H$ in $G$ is a combination of the distortions
of the infinite subgroups $O\le P_i$ where $O=gHg^{-1} \cap P_i$.

More precisely, the distortion $\Delta_H^G$ of $H$ in $G$ satisfies
\[
   f \preceq \Delta_H^G \preceq \bar{f}
\]
where $f$ is the supremum of the distortions of the
subgroups $O \le P_i$ and $\bar{f}$ is the superadditive closure
of $f$.
\end{thm}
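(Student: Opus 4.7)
The plan is to prove the two bounds $f \preceq \Delta_H^G$ and $\Delta_H^G \preceq \bar f$ separately, using the characterizations of relative quasiconvexity established earlier in the paper. A preliminary input to both directions is that, since $H$ is finitely generated and relatively quasiconvex, there are only finitely many $H$-conjugacy classes of infinite intersections of the form $gHg^{-1} \cap P_i$, as produced by Theorem~\ref{thm:QCBasicProps}(1). The supremum $f$ is therefore attained over a finite collection of finitely generated subgroups, and we may fix one representative $O$ from each class.

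\textbf{Lower bound.} For a representative $O = gHg^{-1} \cap P_i$ and $o = ghg^{-1} \in O$, I would compare the four lengths $|o|_{P_i}$, $|o|_G$, $|h|_G$, $|h|_H$: since $P_i$ embeds in $G$ with a compatible generating set, $|o|_G$ and $|o|_{P_i}$ are comparable up to an additive constant; conjugation by $g$ relates $|o|_G$ to $|h|_G$ and $|o|_O$ to $|h|_H$, again up to additive constants depending only on $g$. Chasing the definition of $\Delta_H^G$ through this diagram of inequalities gives $\Delta_O^{P_i} \preceq \Delta_H^G$, and taking the supremum over the finitely many representatives yields $f \preceq \Delta_H^G$.

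\textbf{Upper bound.} Given $h \in H$ with $|h|_G = n$, I would fix a $G$-geodesic $\gamma$ from $1$ to $h$ and use the geodesic formulation of relative quasiconvexity to decompose $\gamma$ into an alternating sequence of \emph{bridge segments}, each lying in a uniformly bounded neighborhood of $H$, and \emph{peripheral excursions} $\gamma_1,\dots,\gamma_k$, each contained in some peripheral coset $g_j P_{i_j}$. The bridge segments each translate to an $H$-step of length linear in their $G$-length, contributing an overall term of order $n$. Each excursion $\gamma_j$ has endpoints within bounded distance of two points of $H$, and the ``detour'' between them can be identified with an element $o_j$ of the conjugate intersection $g_j H g_j^{-1} \cap P_{i_j}$; its $H$-length is bounded by $\Delta_{O_j}^{P_{i_j}}(|\gamma_j|) + c \le f(|\gamma_j|) + c$. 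Summing and using $\sum_j |\gamma_j| \le n$,
\[
|h|_H \ \le\ \sum_j f\bigl(|\gamma_j|\bigr) + O(n) \ \le\ \bar f(n) + O(n),
\]
and since any distortion function of an infinite subgroup is at least linear this yields $\Delta_H^G \preceq \bar f$.

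\textbf{Main obstacle.} The principal difficulty will lie in the decomposition step of the upper bound: translating ``$\gamma$ stays close to $H$ outside of deep excursions into peripheral cosets'' into a clean identification of each excursion with a specific element of a specific conjugate intersection $g_j H g_j^{-1} \cap P_{i_j}$, with additive constants independent of which coset one enters. The refined geodesic characterization of relative quasiconvexity proved earlier is precisely what makes this possible: one needs not merely that endpoints of excursions lie near $H$, but that they lie near specific $H$-translates of a bounded reference set, so that the conjugating element $g_j$ can be chosen from a bounded set and $o_j$ compared to one of the finitely many representatives defining $f$.
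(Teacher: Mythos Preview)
Your outline matches the paper's proof closely, including the correct identification of the main obstacle. Two points deserve attention.

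In the lower bound, conjugation by $g$ relates $|o|_O$ only to $|h|_{g^{-1}Og}$, not directly to $|h|_H$. To pass from one to the other you need that $g^{-1}Og$ is undistorted in $H$; this holds because it is (conjugate to) a peripheral subgroup of the relatively hyperbolic group $(H,\mathbb{O})$, so Corollary~\ref{cor:PeripheralUndistorted} applied to $H$ rather than to $G$ gives the required linear distortion. The paper uses exactly this fact.

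In the upper bound, not every excursion $\gamma_j$ enters a coset whose intersection with the relevant conjugate of $H$ is infinite. When the intersection is finite there is no $O\in\mathbb{O}$ to invoke and your bound $f(|\gamma_j|)+c$ is not justified: a priori the excursion could be long while the detour element lies in no peripheral subgroup of $H$ at all. The paper separates this case out via Lemma~\ref{lem:InducedPeripherals} (built on Proposition~\ref{prop:CloseCosets}): if $H\cap gPg^{-1}$ is finite then $\nbd{H}{L}\cap\nbd{gP}{L}$ has uniformly bounded diameter, so the element $o_j$ associated to such an excursion has bounded $\Set{S}$-length and can simply be absorbed into the finite generating set chosen for $H$, contributing a single letter rather than an $f$-term. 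Your ``main obstacle'' paragraph is in the right direction, but this finite-intersection case must be isolated and handled explicitly.
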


The proof of this theorem uses a characterization of relative quasiconvexity
in terms of the word metric on $G$ with respect to a finite generating set.
Indeed Gromov's and Farb's model geometries are poorly suited to prove such a
result, since the subgroups $P_i$ are badly distorted in these geometries
(exponentially distorted in the Gromov model and
crushed to a finite diameter in the Farb model).
On the other hand, the proof is quite natural using the word
metric because by \Drutu--Sapir \cite{DrutuSapirTreeGraded}
the peripheral subgroups $P_i$ are undistorted in $G$.

\Drutu--Sapir previously showed that undistorted subgroups
of a finitely generated relatively hyperbolic group
are themselves relatively hyperbolic \cite{DrutuSapirTreeGraded}.
The following theorem places \Drutu--Sapir's result in the context
of relative quasiconvexity.

\begin{thm}[Undistorted $\Longrightarrow$ relatively quasiconvex]
\label{thm:UndistortedStatement}
Let $G$ be a finitely generated relatively hyperbolic group
and let $H$ be a finitely generated subgroup.
If $H$ is undistorted in $G$, then $H$ is relatively quasiconvex.
\end{thm}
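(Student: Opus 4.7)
The plan is to invoke the intrinsic word-metric characterization of relative quasiconvexity (available for finitely generated $G$ and established in an earlier section of the paper): $H$ is relatively quasiconvex precisely when there exists a constant $\sigma$ such that every geodesic in $\Cayley(G,S)$ joining two points of $H$ is contained in $\nbd{\Sat(H)}{\sigma}$, where $\Sat(H) = H\cup\bigcup_{h\in H,\,P\in\PP}hP$. Granted this characterization, the task reduces to extracting such a $\sigma$ from the undistortion hypothesis.

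Fix finite generating sets $S$ of $G$ and $T$ of $H$, arranged so that $T\subseteq S$. Undistortion provides constants $C,C'$ with $d_T(h,h') \le C\, d_S(h,h') + C'$ for all $h,h'\in H$. Given $h,h'\in H$, let $\alpha$ be a $T$-geodesic in $H$ from $h$ to $h'$. Read as an edge path in $\Cayley(G,S)$, $\alpha$ is a $(C,C')$-quasigeodesic whose vertices all lie in $H$. Let $\gamma$ be any $S$-geodesic in $\Cayley(G,S)$ from $h$ to $h'$. Thus $\alpha$ and $\gamma$ are two quasigeodesics with common endpoints, and the goal is to show that $\gamma$ remains close to $\alpha$ together with a controlled family of peripheral cosets.

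Apply the Morse/bounded coset penetration property for pairs of quasigeodesics with common endpoints in a relatively hyperbolic group: there is a constant $\sigma_0 = \sigma_0(C,C',G,\PP)$ such that every point of $\gamma$ lies within distance $\sigma_0$ of $\alpha \cup \bigcup_{gP\in\mathcal{F}} gP$, where $\mathcal{F}$ consists of those peripheral cosets into which both $\alpha$ and $\gamma$ penetrate deeply. Because each coset in $\mathcal{F}$ meets $\alpha\subseteq H$, it must equal $h_0 P$ for some $h_0\in H$, and therefore lies in $\Sat(H)$. Together with $\alpha\subseteq H\subseteq\Sat(H)$, this gives $\gamma \subseteq \nbd{\Sat(H)}{\sigma_0}$, with $\sigma_0$ independent of the chosen pair $h,h'$, proving that $H$ is relatively quasiconvex.

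The main obstacle is the precise form of the Morse property invoked in the third paragraph: one needs a version that identifies the jointly deeply penetrated peripheral cosets as the only additional pieces required beyond a uniform tubular neighborhood of $\alpha$. If such a statement has already been recorded via the unified Gromov/Bowditch/Osin framework reconciled earlier in the paper (or follows cleanly from the asymptotically tree-graded description of \Drutu--Sapir combined with the equivalence theorem above), the argument is essentially an invocation. Otherwise the technical work of the proof lies exactly in establishing this bounded-coset-penetration statement in the chosen model geometry, which is a standard but nontrivial piece of relatively hyperbolic geometry.
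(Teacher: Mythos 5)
There is a genuine gap, and it is in your very first step. The ``intrinsic word-metric characterization'' you invoke --- that $H$ is relatively quasiconvex if and only if every geodesic of $\Cayley(G,\Set{S})$ joining two points of $H$ lies in $\nbd{\Sat(H)}{\sigma}$, where $\Sat(H)=H\cup\bigcup_{h\in H,\,P\in\PP}hP$ --- is not the characterization the paper establishes, and its sufficiency direction is false. The paper's Corollary~\ref{cor:RelQCinCayleyGraph} says something strictly stronger: the \emph{$(\epsilon,R)$--transition points} of such a geodesic must lie in a bounded neighborhood of $H$ itself. Containment of the whole geodesic in a neighborhood of the saturation does not control where the geodesic enters and exits the peripheral cosets relative to $H$. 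For a concrete counterexample to your stated criterion, take $G=F(a,b)$ hyperbolic relative to $\PP=\{\langle a\rangle\}$ and $H$ the normal closure of $b$, i.e.\ the kernel of the exponent-sum map onto $\langle a\rangle$. Then $\bigcup_{h\in H}h\langle a\rangle=G$, so $\Sat(H)=G$ and every geodesic trivially satisfies your condition; but $H\cap g\langle a\rangle g^{-1}=\{1\}$ for all $g$, so if $H$ were relatively quasiconvex it would be strongly relatively quasiconvex and hence finitely generated by Theorem~\ref{thm:StrongQCisHyp}, which it is not. So the reduction in your first paragraph, and the concluding sentence of your third paragraph, do not prove relative quasiconvexity. (This $H$ is of course not a counterexample to the theorem itself, only to the intermediate statement you route through.)

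The rest of your setup is sound and is in fact how the paper begins: undistortion gives, for each pair of points of $H$, an $\epsilon$--quasigeodesic $\alpha$ in $\Cayley(G,\Set{S})$ lying in $\nbd{H}{\epsilon}$. What must be extracted from the comparison of $\alpha$ with $\gamma$ is not that $\gamma$ stays near $\alpha$ union some cosets, but that the \emph{shallow} part of $\gamma$ stays near $\alpha$. Two ways to finish: (i) the paper's route, which avoids $\Cayley(G,\Set{S})$--geodesics entirely --- compare $\alpha$ with a geodesic $\hat{c}$ of $\Cayley(G,\Set{S}\cup\Set{P})$ and apply Lemma~\ref{lem:cHatNearc} to put every vertex of $\hat{c}$ within $d_{\Set{S}}$--distance $A_0(\epsilon)$ of $\alpha$, hence within $A_0+\epsilon$ of $H$, which is exactly (QC-5); or (ii) your route, in which case you need the finer \Drutu--Sapir statements already recorded in the paper (Proposition~\ref{prop:TwoPeripherals} and Lemma~\ref{lem:NearOnePeripheral}) to show that any transition point of $\gamma$ --- in particular any point lying in the overlap of neighborhoods of two distinct cosets of $\Sat_\nu(\alpha)$, or near no coset at all --- lands in $\nbd{\alpha}{\eta}$, after which Corollary~\ref{cor:RelQCinCayleyGraph} applies. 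Either fix is short, but as written the proof stops one essential step early.
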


Using Theorem~\ref{thm:DistortionVague} and a result of the author
from \cite{HruskaGeometric}, we obtain a
characterization of the geometrically finite subgroups $H$
of a geometrically finite Kleinian group $G$
that strengthens the conclusion of Corollary~\ref{cor:GeomFiniteManifolds}.

\begin{cor}
\label{cor:GeomFiniteKleinian}
Let $G$ be a geometrically finite subgroup of $\Isom(\Hyp^n)$,
and let $H\le G$ be a subgroup. The following are equivalent.
\begin{enumerate}
\item $H$ is geometrically finite.
\item $H$ is relatively quasiconvex with respect to the maximal parabolic
subgroups of $G$.
\item $H$ is finitely generated and undistorted in $G$,
in the sense that the inclusion $H \inclusion G$ is a quasi-isometric
embedding with respect to the word metrics for finite generating sets.
\item $H$ is $\CAT(0)$--quasiconvex in $G$, in the sense that
whenever $G$ acts properly, cocompactly, and isometrically on any
$\CAT(0)$ space $X$, the orbits of $H$ are quasiconvex in $X$.
\end{enumerate}
\end{cor}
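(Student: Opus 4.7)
The plan is to handle the equivalences (1)$\Leftrightarrow$(2), (2)$\Leftrightarrow$(3), and (2)$\Leftrightarrow$(4) in turn, exploiting the fact that in $\Isom(\Hyp^n)$ the maximal parabolic subgroups of any discrete group are especially simple. Specifically, I would first record that by the Margulis lemma and Bieberbach's theorem each maximal parabolic subgroup $P_i$ of $G$ is virtually abelian and finitely generated, and is therefore undistorted in itself; moreover, every infinite subgroup $O \le P_i$ is automatically finitely generated and undistorted in $P_i$. This observation is the engine that drives the distortion argument below.

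The equivalence (1)$\Leftrightarrow$(2) is immediate from Corollary~\ref{cor:GeomFiniteManifolds} applied to $X = \Hyp^n$. For (2)$\Rightarrow$(3), note that (2) already yields (1), so $H$ is geometrically finite, hence finitely generated by classical Kleinian group theory. For each infinite intersection $O = gHg^{-1} \cap P_i$, the discussion above shows $O$ is finitely generated and undistorted in $P_i$; the supremum $f$ and its superadditive closure $\bar f$ appearing in Theorem~\ref{thm:DistortionVague} are therefore linear, which forces $\Delta_H^G$ to be linear, i.e.\ $H$ is undistorted in $G$. The converse (3)$\Rightarrow$(2) is exactly Theorem~\ref{thm:UndistortedStatement}.

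It remains to incorporate (4). Given any proper, cocompact, isometric action of $G$ on a $\CAT(0)$ space $X$, I would invoke results from \cite{HruskaGeometric} to conclude that, because the peripheral structure on $G$ consists of virtually abelian subgroups, the space $X$ has isolated flats and its maximal flats are precisely the minimal invariant flats of the maximal parabolic subgroups. In an isolated-flats $\CAT(0)$ space, quasiconvexity of an $H$-orbit translates, using the distortion computation and Theorem~\ref{thm:UndistortedStatement}, into undistortion of $H$ in $G$ together with the correct behavior on parabolics, which is exactly relative quasiconvexity. Running this identification in both directions yields (2)$\Leftrightarrow$(4).

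The main obstacle I anticipate is precisely this last step, since it is the only one not handled directly by results stated earlier in the paper. One must both verify that every proper cocompact $\CAT(0)$ action of a geometrically finite Kleinian group genuinely has isolated flats, and then establish that orbit quasiconvexity in such a space is equivalent to relative quasiconvexity in the intrinsic sense. The first claim depends on understanding the flat stabilizers under a virtually abelian peripheral hypothesis, and the second reduces, via the distortion formula and Theorem~\ref{thm:UndistortedStatement}, to the undistortion of $H$; for both inputs I would rely on \cite{HruskaGeometric}.
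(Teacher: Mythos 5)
Your proposal is correct and follows essentially the same route as the paper: (1)$\Leftrightarrow$(2) via Corollary~\ref{cor:GeomFiniteManifolds}, (2)$\Leftrightarrow$(3) via Theorem~\ref{thm:UndistortedStatement} together with Theorem~\ref{thm:Distortion} and the fact that subgroups of the finitely generated virtually abelian parabolics are finitely generated and undistorted, and the $\CAT(0)$ statement via the isolated-flats results of \cite{HruskaGeometric}. The only cosmetic difference is that the paper proves (3)$\Leftrightarrow$(4) directly, since \cite{HruskaGeometric} characterizes $\CAT(0)$--quasiconvexity as exactly ``finitely generated and undistorted,'' whereas you phrase it as (2)$\Leftrightarrow$(4); your argument passes through undistortion anyway, so it is the same chain of implications.
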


\subsection{Summary of the sections}
In Section~\ref{sec:Horoballs} we review the definition of a horoball
in a $\delta$-hyperbolic space and prove several lemmas about geodesics
in horoballs.
In Section~\ref{sec:RelHypDefn} we explicitly state six different
definitions of relative hyperbolicity.
Section~\ref{sec:CuspedSpace} is a review of
Groves--Manning's construction of combinatorial horoballs based on a
connected graph.  We observe that their construction can be applied
more generally to produce horoballs based on an arbitrary metric
space.  Using this observation, we state more general versions of some
results of Groves--Manning that we apply in the following section.
Section~\ref{sec:RelHypEquiv} consists of the proof
that the six definitions of relative hyperbolicity are equivalent.
Many of the directions are proved by observing that various proofs in the literature go through without change when one uses weaker hypotheses.

In Section~\ref{sec:QCDef} we introduce five equivalent definitions
of relative quasiconvexity for subgroups of a
relatively hyperbolic group. Section~\ref{sec:QCEquivalence}
contains a proof of the equivalence of the definitions introduced in the 
previous section.
In Section~\ref{sec:QCWordMetric} we turn our attention to the
word metric on a relatively hyperbolic group $G$ with a finite generating set.
In the case when $G$ is finitely generated, we characterize relatively
quasiconvex subgroups $H$ in terms of the word metric on $G$.

In Section~\ref{sec:applications} we prove several basic results
about relatively quasiconvex subgroups,
including Theorems \ref{thm:QCBasicProps} and~\ref{thm:UndistortedStatement}
and Corollary~\ref{cor:GeomFiniteManifolds}.
We also characterize strongly relatively quasiconvex subgroups,
which were introduced by Osin in \cite{Osin06}.
Finally in Section~\ref{sec:SubgroupInclusion}
we examine distortions of relatively quasiconvex subgroups,
proving Theorem~\ref{thm:Distortion} (Theorem~\ref{thm:DistortionVague})
and Corollary~\ref{cor:GeomFiniteKleinian}.

\subsection{Remark}
Shortly after the initial circulation of this article,
Agol--Groves--Manning \cite{AgolGrovesManning09}
introduced another definition of a relatively quasiconvex subgroup.
Manning and Mart{\'\i}nez-Pedroza \cite{ManningMartinezPedroza_Separation}
have shown that
this definition is equivalent to definition QC-3 of the present article.

\section{Hyperbolic spaces and horoballs}
\label{sec:Horoballs}

A geodesic space $(X,\rho)$ is \emph{$\delta$--hyperbolic} if every geodesic
triangle with vertices in $X$ is $\delta$--thin
in the sense that each side lies in the $\delta$--neighborhood
of the union of the other two sides.
If $X$ is a $\delta$--hyperbolic space, the \emph{boundary} (or \emph{boundary at infinity}) of $X$,
denoted $\boundary X$, is the set of all equivalence classes of
geodesic rays in $X$, where two rays $c,c'$ are equivalent if
the distance $d\bigl(c(t),c't)\bigr)$ remains bounded as $t \to \infty$.
The set $\boundary X$ has a natural topology, which is compact
and metrizable (see, for instance, Bridson--Haefliger \cite{BH99} for details).

Increasing the constant $\delta$ if necessary, we will also assume
that every geodesic triangle with vertices in $X \cup \boundary X$
is $\delta$--thin.

Let $\Delta=\Delta(x,y,z)$ be a triangle
with vertices $x,y,z \in X \cup \boundary X$.
A \emph{center} of $\Delta$ is a point $w \in X$ such that
the ball $\ball{w}{\delta}$ intersects all three sides of the triangle.
It is clear that each side of $\Delta$ contains a center of $\Delta$.

A function $h \colon X \to \R$ is a
\emph{horofunction} about a point $\xi \in \boundary X$
if there is a constant $D_0$ such that the following holds.
Let $\Delta(x,y,\xi)$ be a geodesic triangle, and let $w \in X$ be a center
of the triangle.
Then 
\[
   \left| \bigl( h(x) + \rho(x,w) \bigr) - \bigl( h(y) - \rho(y,w) \bigr)
   \right|  < D_0.
\]
A closed subset $B \subseteq X$ is a \emph{horoball centered at $\xi$}
if there is a horofunction $h$ about $\xi$ and a constant $D_1$
such that $h(x) \ge - D_1$ for all $x \in B$ and $h(x) \le D_1$
for all $x \in X - B$.

We remark that every horoball or horofunction has a unique center,
and every $\xi \in \boundary X$ is the center of a horofunction
(see Gromov \cite[Section~7.5]{Gromov87} for details).

\begin{lem}\label{lem:NearHoroballComplement}
Let $B$ be a horoball of a $\delta$--hyperbolic space $(X,\rho)$.
For each $L>0$, there is a constant $M_0=M_0(B,L)$
such that any geodesic $c$ in $\nbd{X-B}{L} \cap B$ has length at most $M$.
\end{lem}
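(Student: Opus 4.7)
The plan is to exploit two standard properties of a horofunction $h$ associated to $B$: an approximate $1$-Lipschitz estimate $|h(x) - h(y)| \le \rho(x, y) + C$ for a constant $C = C(\delta, D_0)$, and the fact that $h$ changes at unit rate along any geodesic ray into the center $\xi \in \boundary X$ of $B$, up to an additive error of the same order. Both properties should be extractable from the defining inequality by specializing to appropriate triangles $\Delta(x, y, \xi)$; the unit-rate property in particular comes from degenerate triangles in which $y$ lies on the ray $[x, \xi)$, where a center can be taken on the ray itself.

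Given a geodesic $c \colon [0, M] \to B \cap \nbd{X - B}{L}$ with endpoints $p = c(0)$ and $q = c(M)$, I would first establish a uniform two-sided bound on $h$ along $c$. Since $c(t) \in B$ the definition yields $h(c(t)) \ge -D_1$; choosing a point $u_t \in X - B$ with $\rho(c(t), u_t) \le L$ and applying the Lipschitz estimate then gives $h(c(t)) \le D_1 + L + C$, so $|h|$ is bounded on the image of $c$ by a constant $K$ depending only on $\delta$, $D_0$, $D_1$, and $L$. Next I would run a thin-triangle argument on the ideal triangle $\Delta(p, q, \xi)$, which is $\delta$-thin by the standing assumption on $X$. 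The midpoint $c(M/2)$ of the side $[p, q] = c$ lies within distance $\delta$ of a point $w$ on one of the other two sides; after relabeling, $w \in [p, \xi)$, and the triangle inequality forces $\rho(p, w) \in [M/2 - \delta,\, M/2 + \delta]$. The unit-rate property along $[p, \xi)$ then yields $|h(w) - h(p)| \ge M/2 - O(\delta, D_0)$, and the Lipschitz estimate transfers this to $|h(c(M/2)) - h(p)| \ge M/2 - O(\delta, D_0)$. Combined with $|h(p)|,\,|h(c(M/2))| \le K$ from the first step, this produces an inequality $M \le M_0$ for an explicit constant $M_0 = M_0(B, L)$.

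The main technical hurdle is justifying the unit-rate property of $h$ along rays to $\xi$, since the paper's definition encodes this only implicitly through its triangle-based inequality. I expect this to reduce to a short calculation applying the defining inequality to the triangle $\Delta\bigl(\gamma(0), \gamma(t), \xi\bigr)$ for a ray $\gamma$ to $\xi$, with the natural choice of center on the ray, yielding $h(\gamma(t)) - h(\gamma(0))$ up to an additive $O(D_0)$ error. Once this is in hand, the thin-triangle step combines with the Lipschitz estimate to produce the desired bound without further complication.
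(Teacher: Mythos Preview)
Your proposal is correct and follows essentially the same approach as the paper: a thin-triangle argument on $\Delta(p,q,\xi)$, using the horofunction definition to compare $h$ at a point of $c$ near one of the rays $[p,\xi)$ or $[q,\xi)$ with $h$ at the corresponding endpoint of $c$, together with the two-sided bound on $h$ along $c$ coming from $c\subseteq B\cap\nbd{X-B}{L}$. The only organizational difference is that the paper applies the defining inequality for $h$ directly at each step (taking a center $z\in c$ of $\Delta(x_0,x_1,\xi)$ and then applying the definition to $\Delta(x_i,z,\xi)$ with center $z$), whereas you first extract the Lipschitz and unit-rate consequences of that inequality and then combine them; the underlying computation is the same.
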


The notation $\nbd{A}{r}$ denotes the open $r$--neighborhood of $A$;
i.e., the set of all points at a distance less than $r$ from $A$.

\begin{proof}
Let $\xi \in \boundary X$ be the center of $B$, and choose a horofunction
$h$ and constants $D_0, D_1$ for $B$ as above.
Suppose $c$ has endpoints $x_0$ and $x_1$, and choose geodesic rays
$[x_i,\xi)$ for $i=1,2$.
Choose $z \in c$ within a distance $2\delta$ of both rays $[x_i,\xi)$,
and choose a ray $[z,\xi)$.

In order to complete the proof, it suffices to bound $\rho(x_i,z)$ from above.
We first compute an upper bound for $h(z)$.
If we choose $w \in X-B$ such that $\rho(w,z) < L$, then $h(w) \le D_1$.
Applying the definition of horofunction to a triangle with vertices
$w,z,\xi$ and center $p \in [w,z]$,
gives
\[
   h(z) + \rho(z,p) < h(w) + \rho(z,p) + D_0.
\]
Thus we obtain the following upper bound:
\begin{align*}
   h(z) &< h(w) + \rho(w,p) - \rho(z,p) + D_0 \\
        &\le h(w) + \rho(w,z) + D_0 \\
        &\le D_1 + L + D_0.
\end{align*}
On the other hand, since $x_i \in B$ we have a lower bound $h(x_i) \ge -D_1$.

Let $\Delta_i=\Delta(x_i,z,\xi)$ be the triangle with sides
$[x_i,\xi)$, $[z,\xi)$, and $[x_i,z]$, where $[x_i,z]$ is the portion of
$c$ from $x_i$ to $z$.
Observe that $z$ is a center of $\Delta_i$.
Applying the definition of horofunction to $\Delta_i$, we have
\[
   \rho(x_i,z) < h(z) - h(x_i) + D_0
             \le (D_1 + L + D_0) + D_1 + D_0,
\]
completing the proof.
\end{proof}

The following result can be also proved by a similar proof.

\begin{lem}\label{lem:BetweenHoroballs}
Let $B$ and $B'$ be horoballs of $(X,\rho)$ centered at the same point
$\xi \in \boundary X$.
There is a constant $M_1=M_1(B,B')$
such that any geodesic $c$ in $B-B'$ has length at most $M_1$.\qed
\end{lem}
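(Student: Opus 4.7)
The plan is to mimic the proof of Lemma~\ref{lem:NearHoroballComplement} closely, with one new ingredient: a comparison of the two horofunctions. Fix horofunctions $h$ and $h'$ about $\xi$ associated to $B$ and $B'$, with corresponding constants $D_0,D_1$ and $D_0',D_1'$. For a geodesic $c \subseteq B - B'$ with endpoints $x_0, x_1$, I choose geodesic rays $[x_i, \xi)$ and a point $z \in c$ within $2\delta$ of both rays, so that $z$ plays the role of the center of each triangle $\Delta_i = \Delta(x_i, z, \xi)$ exactly as before. The lower bound $h(x_i) \geq -D_1$ still holds since $x_i \in B$, and the only real question is how to bound $h(z)$ from above. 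In the previous lemma this bound came from a nearby point of $X - B$; in the present setting I instead know only that $z \in c \subseteq X - B'$, which gives the bound $h'(z) \leq D_1'$ rather than a bound on $h$.

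The technical heart of the proof is therefore to show that $h$ and $h'$ are comparable: that there is a constant $C = C(B, B', h, h')$ such that $h(x) \leq h'(x) + C$ for every $x \in X$. I would establish this as a standard fact in $\delta$-hyperbolic geometry. Applying the horofunction condition for $h$ to the degenerate triangle $\Delta\bigl(x, \gamma(t), \xi\bigr)$ on a ray $\gamma = [x, \xi)$ yields $h\bigl(\gamma(t)\bigr) = h(x) + t + O(D_0)$, and similarly $h'\bigl(\gamma(t)\bigr) = h'(x) + t + O(D_0')$; consequently $h - h'$ is approximately constant along any single geodesic ray toward $\xi$. Since any two rays to $\xi$ eventually lie within $2\delta$ of one another, and each horofunction is $1$-Lipschitz up to a bounded additive error (a direct consequence of the defining inequality applied to a triangle $\Delta(a, b, \xi)$), the function $h - h'$ is bounded on all of $X$.

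With this comparison in hand, the rest of the argument is identical in structure to the previous proof. I have $h(z) \leq h'(z) + C \leq D_1' + C$, and applying the horofunction formula for $h$ to $\Delta_i$ with center $z$ gives
\[
   \rho(x_i, z) \leq h(z) - h(x_i) + D_0 \leq D_1 + D_1' + C + D_0.
\]
Summing over $i = 0, 1$ produces $|c| \leq 2(D_1 + D_1' + C + D_0)$, which I take as $M_1 = M_1(B, B')$. The main obstacle is the horofunction comparison; once that standard fact is granted, the remaining argument is formally the same as the proof of Lemma~\ref{lem:NearHoroballComplement}, as the paper's hint suggests.
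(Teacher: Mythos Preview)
Your proposal is correct and follows exactly the template the paper suggests: the paper gives no proof of this lemma beyond the remark that it ``can be also proved by a similar proof'' to Lemma~\ref{lem:NearHoroballComplement}, and your argument is precisely that similar proof, with the one genuinely new step---that two horofunctions based at the same $\xi$ differ by a bounded amount---correctly isolated and sketched. That comparison is indeed the standard fact needed to convert the bound $h'(z)\le D_1'$ (coming from $z\notin B'$) into the bound on $h(z)$ that the rest of the argument requires.
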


\begin{lem}\label{lem:GeodesicInHoroball}
Let $B$ be a horoball of $(X,\rho)$ with corresponding horofunction $h$.
There is a constant $M_2 = M_2(B,h)$ such that the following holds.
Suppose $x,y \in X-B$ satisfy $h(x)=h(y)=0$.
Then for any geodesic $c$ joining $x$ and $y$ we have
\[
   c \cap (X-B) \subseteq \bignbd{\{x,y\}}{M_2}.
\]
\end{lem}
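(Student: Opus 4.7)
The plan is to combine two ingredients: the $\delta$--thinness of the ideal triangle $\Delta(x,y,\xi)$, where $\xi \in \boundary X$ is the center of $B$, with the fact that $h$ grows essentially linearly along geodesic rays to $\xi$. The geometric picture is that any $z \in c \cap (X-B)$ is forced to lie near one of the two bounding rays $[x,\xi)$ or $[y,\xi)$; along those rays the value of $h$ is approximately arclength from $x$ or $y$; and since $z$ has bounded $h$-value (it lies outside $B$), that arclength is bounded.

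First I would record two preliminary estimates on the horofunction~$h$. \emph{(i)~A quasi-Lipschitz estimate:} $|h(p)-h(q)| \le \rho(p,q)+D_0$ for all $p,q \in X$. This comes from applying the horofunction axiom to $\Delta(p,q,\xi)$ with some center $w$ and using the triangle inequality to bound $|\rho(p,w)-\rho(q,w)| \le \rho(p,q)$. \emph{(ii)~A linear-growth estimate along rays:} if $c_x \colon [0,\infty)\to X$ is a geodesic ray from $x$ to $\xi$, then for every $s\ge 0$,
\[
   \bigl|h(c_x(s)) - h(x) - s \bigr| < D_0.
\]
This is obtained by observing that $c_x(s)$ is itself a valid center of the degenerate ideal triangle $\Delta\bigl(x,c_x(s),\xi\bigr)$, so the horofunction axiom with this center gives the estimate immediately.

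With these in hand, fix an arbitrary point $z \in c \cap (X-B)$. By the $\delta$--thinness of $\Delta(x,y,\xi)$, there exists $z' \in [x,\xi)\cup[y,\xi)$ with $\rho(z,z')\le \delta$; by symmetry assume $z' \in [x,\xi)$ and write $z' = c_x(s)$ for some $s\ge 0$. Estimate~(ii), together with $h(x)=0$, gives $|h(z')-s| < D_0$, while estimate~(i) gives $|h(z)-h(z')| \le \delta+D_0$; the triangle inequality in $\R$ then yields $|h(z)-s| \le \delta + 2D_0$. The hypothesis $z\in X-B$ provides $h(z)\le D_1$, so $s \le D_1+\delta+2D_0$, and therefore
\[
   \rho(x,z) \le \rho(x,z')+\rho(z',z) \le s+\delta \le D_1+2\delta+2D_0.
\]
Setting $M_2 := D_1+2\delta+2D_0$ completes the argument.

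The hard part is essentially bookkeeping: isolating the two preliminary estimates and, in particular, verifying the degenerate-triangle case of the horofunction axiom that yields linear growth along rays. Once those are available, the proof runs closely parallel in spirit to those of Lemmas~\ref{lem:NearHoroballComplement} and~\ref{lem:BetweenHoroballs}.
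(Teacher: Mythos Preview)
Your argument is correct and follows the same route as the paper's: use $\delta$--thinness of the ideal triangle $\Delta(x,y,\xi)$ to place $z$ near one of the rays $[x,\xi)$, $[y,\xi)$, then combine the horofunction axiom with the bound $h(z)\le D_1$ to control $\rho(x,z)$ (resp.\ $\rho(y,z)$). The paper streamlines the endgame by observing that $z$ is itself a center of $\Delta(x,z,\xi)$ and applying the horofunction axiom once to that triangle, yielding $\rho(x,z)<h(z)-h(x)+D_0\le D_1+D_0$ directly and so dispensing with your auxiliary point $z'$ and estimate~(i).
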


\begin{proof}
Let $z$ be an arbitrary point of $c \cap (X-B)$.
Then $h(z) \le D_1$.
Choose rays $[x,\xi)$, $[y,\xi)$ and $[z,\xi)$.
The point $z$ is within a distance $2\delta$ of one of the rays $[x,\xi)$
or $[y,\xi)$, say $[x,\xi)$.
Since $z$ is a center of $\Delta=\Delta(x,z,\xi)$,
the definition of horofunction implies that
\[
   \rho(x,z) \le h(z) + D_0 \le D_1 + D_0,
\]
completing the proof.
\end{proof}

\section{Notions of relative hyperbolicity for countable groups}
\label{sec:RelHypDefn}

It is well-known that finitely generated relatively hyperbolic groups can be 
characterized in
several equivalent ways.  In this section, we discuss natural extensions
of these properties to the setting of countable groups.
Throughout the section, $G$ is a countable group with a finite
collection of subgroups $\PP=\{P_1,\dots,P_n\}$.

\subsection{Geometrically finite groups}
\label{subsec:GeomFinite}

The first definition of relative hyperbolicity
is in terms of dynamical properties of an action on a compact
space $M$.
More precisely, below we define a group $G$ to be relatively hyperbolic
if it admits a geometrically finite convergence group action on some
compactum $M$.  In fact by Corollary~\ref{cor:RH12} below the compactum $M$
always arises as the boundary of a $\delta$--hyperbolic space
on which $G$ acts.
Thus we lose no generality by keeping this geometric example
in mind throughout the following definitions.

The notion of a convergence group action was introduced by
Gehring--Martin in \cite{GehringMartin87} to axiomatize certain 
dynamical properties of the action of a Kleinian group on its limit set
in the ideal sphere at infinity of real hyperbolic space.
The dynamical version of geometrical finiteness defined here
was introduced by Beardon--Maskit \cite{BeardonMaskit74}
for Kleinian groups.

A \emph{convergence group action} is an action
of a group $G$ on a compact, metrizable
space $M$ satisfying the following conditions, depending on the cardinality
of $M$:
\begin{itemize}
\item If $M$ is the empty set, then $G$ is finite.
\item If $M$ has exactly one point, then $G$ can be any countable group.
\item If $M$ has exactly two points, then $G$ is virtually cyclic.
\item If $M$ has at least three points, then the action of $G$ on the space
of distinct (unordered) triples of points of $M$ is properly discontinuous.
\end{itemize}
In the first three cases the action is \emph{elementary}, and in the final
case the action is \emph{nonelementary}.

Suppose $G$ has a convergence group action on $M$.
An element $g \in G$ is \emph{loxodromic} if it has infinite order
and fixes exactly two points of $M$.
A subgroup $P \le G$ is a \emph{parabolic subgroup} if it is infinite and
contains no loxodromic element.
A parabolic subgroup $P$ has a unique fixed point in $M$, called a
\emph{parabolic point}.
The stabilizer of a parabolic point is always a maximal parabolic group.
A parabolic point $p$ with stabilizer $P := \Stab_G(p)$ is \emph{bounded}
if $P$ acts cocompactly on $M - \{p\}$.
A point $\xi \in M$ is a \emph{conical limit point} if
there exists a sequence $(g_i)$ in $G$ and distinct points
$\zeta_0,\zeta_1 \in M$
such that $g_i(\xi) \to \zeta_0$, while for all $\eta \in M - \{\xi\}$
we have $g_i(\eta) \to \zeta_1$.

Tukia has shown that every properly discontinuous action of a group $G$
on a proper $\delta$--hyperbolic space induces a convergence group
action on the boundary at infinity \cite{Tukia94} (a similar result
was proved independently by Freden \cite{Freden95}).

A convergence group action of $G$ on $M$ is \emph{geometrically finite}
if every point of $M$ is either a conical limit point or a
bounded parabolic point.
In addition if $\PP$ is a set of representatives of the conjugacy classes
of maximal parabolic subgroups, then we say that
the action of the pair $(G,\PP)$ on $M$ is geometrically finite.
Observe that every elementary convergence group action is geometrically finite.

The following definition was proposed by Bowditch in \cite{BowditchRelHyp} and 
studied by Yaman in \cite{Yaman04} (with the additional assumption that
the peripheral subgroups $P \in \PP$ are finitely generated).

\begin{defn}[RH-1]
Suppose $(G,\PP)$ has a geometrically finite convergence group action
on a compact, metrizable space $M$.
Then $(G,\PP)$ is \emph{relatively hyperbolic}.
\end{defn}

The following definition is similar to the preceding one, except that we assume
that the compact space $M$ is the boundary of a $\delta$--hyperbolic space.
This definition was introduced by Bowditch in \cite{BowditchRelHyp}.

\begin{defn}[RH-2]
Suppose $G$ has a properly discontinuous action on a proper
$\delta$--hyperbolic space $X$
such that the induced convergence group action on $\boundary X$ is
geometrically finite.
If $\PP$ is a set of representatives of the conjugacy classes of maximal 
parabolic subgroups then $(G,\PP)$ is \emph{relatively hyperbolic}.

In this case, we also say that $(G,\PP)$ acts \emph{geometrically finitely}
on $X$.
\end{defn}

\subsection{Cusp uniform actions}
\label{subsec:CuspUniformActions}

The next definition is essentially Gromov's original definition of relative 
hyperbolicity from \cite{Gromov87},
as stated by Bowditch in \cite{BowditchRelHyp}.
The structure is analogous to the well-known decomposition 
of a finite volume hyperbolic manifold as the union of a compact part
together with finitely many cusps
due to Garland--Raghunathan \cite{GarlandRaghunathan70}
(see also Thurston \cite[Section~4.5]{Thurston97}).

\begin{defn}[RH-3]
Suppose $G$ acts properly discontinuously on a proper $\delta$--hyperbolic 
space $X$, and $\PP$ is a set of representatives of the conjugacy classes 
of maximal parabolic subgroups.
Suppose also that there is a $G$--equivariant collection of disjoint
horoballs centered at the parabolic points of $G$, with union $U$ open in $X$,
such that the quotient of $X-U$ by the action of $G$ is compact.
Then $(G,\PP)$ is \emph{relatively hyperbolic}.

In this case, we say that the action of $(G,\PP)$ on $X$
is \emph{cusp uniform},
and the space $Y=X-U$ is a \emph{truncated space} for the action.
By a slight abuse of notation, we refer to the horoballs of $U$
as \emph{horoballs of $Y$}.
If $U'$ is any other $G$--equivariant family of disjoint open horoballs
centered at the parabolic points of $X$, then $G$ also acts cocompactly
on $Y'=X-U'$, and hence $Y'$ is also a truncated space for the cusp uniform
action of $(G,\PP)$ on $X$ (see Bowditch
\cite{BowditchRelHyp} for details).
\end{defn}

\subsection{Fine hyperbolic graphs}
\label{subsec:FineHyperbolicGraphs}

The following definition of relative hyperbolicity
was proposed by Bowditch in \cite{BowditchRelHyp}
as an abstraction of the Farb approach that does not require finite generation.
Bowditch studied finitely generated groups satisfying this condition,
but it is a completely trivial matter to remove the finite generation 
hypothesis from his definition.

A graph $K$ is \emph{fine} if each edge of $K$ is contained in
only finitely many circuits of length $n$ for each $n$.

\begin{defn}[RH-4]
Suppose $G$ acts on a $\delta$--hyperbolic graph $K$ with finite edge stabilizers
and finitely many orbits of edges.  If $K$ is fine, and $\PP$
is a set of representatives of the conjugacy classes of infinite vertex stabilizers
then $(G,\PP)$ is \emph{relatively hyperbolic}.
\end{defn}

\subsection{The coned-off Cayley graph and Bounded Coset Penetration}
\label{subsec:BCP}

Next we consider Farb's notion of (strong)
relative hyperbolicity from \cite{Farb98}.
(Our terminology here does not agree with Farb's and has been adjusted to be
more consistent with the other definitions of relative hyperbolicity
in this article.)

In brief, Farb's definition requires that the ``coned-off'' Cayley graph is
hyperbolic and satisfies Bounded Coset Penetration (defined below).
Farb also considered a weak version of relative hyperbolicity,
which has many interesting applications.  We will not discuss
weak relative hyperbolicity here.

Farb originally proposed this definition for finitely generated groups.
After the work of Bowditch and Osin \cite{BowditchRelHyp,Osin06},
it is clear that the following condition is the natural
formulation of Farb's notion in the non--finitely generated setting.

Let $G$ be a group with a collection of subgroups $\PP = \{P_1,\dots,P_n\}$.
A set $\Set{S}$ is a \emph{relative generating set} for the pair
$(G,\PP)$ if the set $\Set{S} \cup P_1 \cup \cdots \cup P_n$ is a generating
set for $G$ in the traditional sense.
We will always implicitly assume that $\Set{S}$ is symmetrized, so that
$\Set{S} = \Set{S}^{-1}$.
Let $\Gamma$ be the Cayley graph $\Cayley(G,\Set{S})$.
Note that $\Gamma$ is connected if and only if $\Set{S}$ is a traditional
generating set for $G$.
We do not require connectedness of $\Gamma$.

Form a new graph $\hat\Gamma(G,\PP,\Set{S})$, called the \emph{coned-off
Cayley graph}, as follows.
For each left coset $gP$ with $g \in G$ and $P \in \PP$,
add a new vertex $v(gP)$ to $\Gamma$, and add an edge of length $1/2$
from this new vertex to each element of $gP$.
The key point to note here is that the coned-off Cayley graph is
connected if and only if $\Set{S}$ is a relative generating set
for $(G,\PP)$.

Given an oriented path $\gamma$
in the coned-off Cayley graph, we say that $\gamma$
\emph{penetrates} the coset $gP$ if $\gamma$ passes through the cone point
$v(gP)$.
A vertex $v_i$ of $p$ immediately preceding the cone point is an
\emph{entering vertex} of $p$ in the coset $gP$.
Exiting vertices are defined similarly.
Observe that entering and exiting vertices are always elements of $G$,
and hence can also be considered as vertices of $\Gamma$.
A path $\gamma$ in the coned-off Cayley graph is \emph{without
backtracking} if, for every coset $gP$ which $\gamma$ penetrates,
$\gamma$ never returns to $gP$ after leaving $gP$.

\begin{defn}[Bounded Coset Penetration]
Let $\Set{S}$ be a relative generating set for $(G,\PP)$.
The triple $(G,\PP,\Set{S})$ satisfies
\emph{Bounded Coset Penetration} if for each $\lambda \ge 1$, there is a 
constant $a(\lambda) > 0$ such that if $\gamma$ and $\gamma'$
are $(\lambda,0)$--quasigeodesics without backtracking in
the coned-off Cayley graph
$\hat\Gamma(G,\PP,\Set{S})$
with initial endpoints $\gamma_-=\gamma'_-$ and terminal endpoints
$\gamma_+$ and $\gamma'_+$, and $d_{\Set{S}}(\gamma_+,\gamma'_+) \le 1$,
then the following two conditions hold.
\begin{enumerate}
\item If $\gamma$ penetrates a coset $gP$, but $\gamma'$ does not
penetrate $gP$, then the entering vertex and exiting vertex of $\gamma$
in $gP$ are at an $\Set{S}$--distance at most $a$ from each other.
\item If $\gamma$ and $\gamma'$ both penetrate a coset $gP$,
then the entering vertices of $\gamma$ and $\gamma'$ in $gP$
are at an $\Set{S}$--distance at most $a$ from each other.
Similarly, the exiting vertices are at an $\Set{S}$--distance 
at most $a$ from each other.
\end{enumerate}
\end{defn}

\begin{defn}[RH-5]
Suppose $G$ is finitely generated relative to $\PP$,
and each $P_i$ is infinite.
The pair $(G,\PP)$ is \emph{relatively hyperbolic} if
for some (every) finite relative generating set $\Set{S}$,
the coned-off Cayley graph $\hat\Gamma(G,\PP,\Set{S})$
is $\delta$--hyperbolic
and $(G,\PP,\Set{S})$ has Bounded Coset Penetration.
\end{defn}

In the finitely generated case, BCP is independent of the choice of generating 
set due to the following fact: a change of generating set induces
a quasi-isometry of Cayley graphs
and also a quasi-isometry of coned-off Cayley graphs.
However, in general this independence is less obvious.
The difficulty is that a Cayley graph for a nongenerating set is not connected.
It is possible for two group elements to be connected by an edge
in one Cayley graph and lie in distinct components of another.
In Section~\ref{sec:RelHypEquiv} we indicate a proof of this independence
based on arguments of Dahmani \cite{DahmaniThesis}.

\subsection{Linear relative Dehn function}
\label{subsec:DehnFunctions}

Finally we state the definition due to Osin in terms of relative isoperimetric 
functions introduced in \cite{Osin06}.
The definition, as presented here, includes only countable groups $G$ with
finitely many infinite peripheral subgroups $\PP = \{P_1,\dots,P_n\}$.
We remark that Osin's theory does not impose such restrictions.
However, these restrictions are necessary for the equivalence with 
other definitions of relative hyperbolicity.

Let $G$ be a group with a collection of subgroups $\PP$.
If $(G,\PP)$ has a relative generating set $\Set{S}$,
then there is a canonical homomorphism from the group
\[
   K :=F(\Set{S}) * \bigl(*_{P \in \PP} \tilde{P} \bigr)
\]
onto $G$.
Here $F(\Set{S})$ denotes the group freely generated by $\Set{S}$,
and $\tilde{P}$ denotes an abstract group isomorphic to $P$.
Let $N$ be the kernel of this homomorphism.

If $\Set{R} \subseteq N$ is a subset whose normal closure in $K$ is equal
to $N$, then we say that $G$ has a \emph{relative presentation}
\[
   \presentation{\PP,\Set{S}}{\Set{R}}.
\]
If $\Set{S}$ and $\Set{R}$ are both finite, then the relative presentation
above is a \emph{finite relative presentation}.

Suppose $(G,\PP)$ has a relative presentation as above.
Consider the disjoint union
\[
   \Set{P} := \coprod_{P \in \PP} \bigl( \tilde{P} - \{1\} \bigr).
\]
Each word $W$ in the alphabet $(\Set{S} \cup \Set{P})$
naturally represents both an element of $K$ and an element of $G$
(via the quotient map above).
If $W$ represents $1$ in $G$, then in $K$ we have an equation
\begin{equation}
\tag{\dag}
\label{eqn:Dehn}
   W =_{K} \prod_{i=1}^\ell f_i^{-1} R_i f_i
\end{equation}
where $R_i \in \Set{R}$ and $f_i \in K$ for each $i$.

A function $f \colon \N \to \N$ is a \emph{relative isoperimetric function}
for the relative presentation $\presentation{\PP,\Set{S}}{\Set{R}}$
if for each $m \in \N$ and any word $W$ over $(\Set{S} \cup \Set{P})$
of length at most $m$ representing the identity of $G$, we have an equation
in $K$ of the form (\ref{eqn:Dehn}) with $\ell \le f(m)$.
We remark that some relative presentations do not admit a
finite relative isoperimetric function.
The \emph{relative Dehn function} of a relative presentation
is the smallest possible relative isoperimetric function.
If there does not exist a relative isoperimetric function with finite values,
we say that the relative Dehn function is not well-defined.

\begin{defn}[RH-6]
Suppose $\PP$ is a finite collection of infinite subgroups of a countable
group $G$.
If $(G,\PP)$ has a finite relative presentation,
and the relative Dehn function is well-defined and linear
for some/every finite relative presentation
then $(G,\PP)$ is \emph{relatively hyperbolic}.
\end{defn}

We remark that if one finite relative presentation has a linear relative Dehn function then so does any other by Osin \cite[Theorem~2.34]{Osin06}.

\section{The cusped space}
\label{sec:CuspedSpace}

Our immediate goal is to prove that the definitions of relative hyperbolicity
for countable groups given above are equivalent.
As discussed below in Section~\ref{sec:RelHypEquiv}
several of the necessary implications are either already known or follow from
straightforward modifications of existing proofs in the literature.

In this section we lay the groundwork for the implication
(RH-6)~$\Longrightarrow$~(RH-3).
The main step is the construction of a space obtained by attaching certain
``combinatorial horoballs'' to $G$ along the left cosets
of the peripheral subgroups.
The idea of gluing horoballs onto a finitely generated group
is due to Cannon--Cooper \cite{CannonCooper92},
and variations have been studied by Bowditch, Rebbechi, and Groves--Manning
\cite{BowditchRelHyp,Rebbechi01,GrovesManning08}.

The construction presented here is closely modeled on Groves--Manning's
combinatorial horoballs.
Groves--Manning constructed horoballs based on an arbitrary connected
graph, and showed that such a horoball is always $\delta$--hyperbolic.
A minor modification of the Groves--Manning construction produces
connected horoballs based on any metric space.
If the base metric space is discrete and proper,
the resulting horoball is a locally finite $\delta$--hyperbolic graph.

Using this modification,
Groves--Manning's proofs about finitely generated
relatively hyperbolic groups extend to any countable relatively hyperbolic
group equipped with a proper, left invariant metric.

\begin{defn}[Combinatorial horoballs]
Let $(P,d)$ be a metric space.
The \emph{combinatorial horoball} based on $P$, denoted $\mathcal{H}(P,d)$
is the graph defined as follows:
\begin{enumerate}
\item $\mathcal{H}^{(0)} = P \times \N$.
\item $\mathcal{H}^{(1)}$ contains the following two types of edges
   \begin{enumerate}
   \item For each $k\in \N$ and $p,q\in P$, if $0<d(p,q)\le2^k$
   then there is a \emph{horizontal edge} connecting $(p,k)$ and $(q,k)$.
   \item For each $k\in \N$ and $p\in P$, there is a \emph{vertical edge}
   connecting $(p,k)$ and $(p,k+1)$.
   \end{enumerate}
\end{enumerate}
\end{defn}

As in Groves--Manning \cite{GrovesManning08},
we consider each edge of $\mathcal{H}$ to have
length one, and we endow $\mathcal{H}$ with the induced length metric $d_{\mathcal{H}}$.
We identify $P$ with its image $P\times\{0\}$ in $\mathcal{H}$.

The following result is proved by Groves--Manning under the additional hypothesis that $(P,d)$
is the set of vertices of a connected metric graph where each edge has length one
\cite[Section~3.1]{GrovesManning08}.
The proof given there extends with no changes to the current setting.

\begin{thm}
Let $(P,d)$ be any metric space \textup{(}with all distances
finite\textup{)}.
Then $\mathcal{H}(P,d)$ is connected and $\delta$--hyperbolic for some
$\delta\ge 0$ independent of $(P,d)$.
Moreover, if $(P,d)$ is a discrete, proper metric space,
then $\mathcal{H}(P,d)$ is a locally finite graph.
\qed
\end{thm}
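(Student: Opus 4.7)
The plan is to follow Groves--Manning \cite{GrovesManning08} verbatim, observing that nothing in their proof uses the hypothesis that the base is the vertex set of a connected unit-length graph — only that all pairwise distances in $(P,d)$ are finite real numbers, which allows one to declare a horizontal edge between $(p,k)$ and $(q,k)$ whenever $0 < d(p,q) \le 2^k$. Connectedness is the first thing to check: given vertices $(p,k)$ and $(q,\ell)$, pick $N$ with $2^N \ge d(p,q)$, climb from each endpoint via vertical edges to level $\max\{N,k,\ell\}$, and join them by a single horizontal edge.

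Next I would reproduce the Groves--Manning distance estimate, which is the core of the argument. For vertices at a common level, one proves by induction on $\lceil\log_2(d(p,q)/2^k)\rceil$ that
\[
   d_{\mathcal{H}}\bigl((p,k),(q,k)\bigr) \le 2\bigl\lceil \log_2\bigl(d(p,q)/2^k\bigr)\bigr\rceil + 2,
\]
the idea being that climbing one vertical step halves the required horizontal reach. One then shows that every geodesic admits a ``preferred'' representative that first rises vertically, traverses at most three horizontal edges at its peak level, and then descends vertically; any horizontal edge that is not near the peak can be replaced by two vertical edges plus a horizontal edge one level higher without increasing length. This shape lemma is the technical heart and its proof is combinatorial and requires no assumption on $(P,d)$ beyond finiteness of distances.

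With preferred geodesics in hand, thinness of geodesic triangles follows by direct comparison with a triangle in the standard horoball in $\Hyp^2$: all three sides of a triangle share a long common vertical segment (determined by the peak of the cheapest pair), and the difference between any two sides is bounded by a universal constant of horizontal steps at a bounded height above that common segment. The resulting $\delta$ depends only on the combinatorics of the construction and hence is independent of $(P,d)$. Local finiteness in the discrete proper case is immediate: each $(p,k)$ has two vertical neighbors and, by properness, only finitely many $q \in P$ with $d(p,q) \le 2^k$, hence only finitely many horizontal neighbors at level $k$ (and similarly at level $k-1$).

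The only subtlety — what one must be sure of in order to claim ``the proof extends with no changes'' — is that real-valued distances in $(P,d)$ do not interfere with any step. This is indeed the case, since the only use of the metric in the Groves--Manning argument is through the inequality $d(p,q) \le 2^k$ that governs the existence of a horizontal edge; the preferred-path replacement move and the triangle comparison never invoke integrality or graph structure on $P$. So there is no substantive obstacle, and writing out the proof amounts to transcribing \cite{GrovesManning08} with the single replacement of ``graph distance on $P$'' by ``arbitrary metric $d$ on $P$.''
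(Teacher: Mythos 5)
Your proposal is correct and matches the paper's treatment, which likewise simply observes that the Groves--Manning argument from \cite{GrovesManning08} carries over verbatim because the metric on $(P,d)$ enters only through the edge condition $0<d(p,q)\le 2^k$ and the triangle inequality (no midpoints or integrality are ever needed). You supply more detail than the paper does --- the distance estimate, the preferred-geodesic shape lemma, and the local-finiteness check --- but the route is the same.
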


\begin{defn}[The augmentated space]
Suppose $G$ is a countable group with subgroups $\PP=\{P_1,\dots,P_n\}$.
Let $\Set{S}$ be a finite relative generating set for $(G,\PP)$,
and choose a proper, left invariant metric $d_i$ on each $P_i$.
The metric $d_i$ induces a metric (again called $d_i$) on each left coset
$gP_i$ using left translation by $g$.
The metric space $(gP_i,d_i)$ is locally finite.
Consequently, $\mathcal{H}(gP_i,d_i)$ is a locally finite connected graph.

Let $\Gamma$ be the graph $\Cayley(G,\Set{S})$.
The \emph{augmented space} is the space
\[
   X := \Gamma \cup \left( \bigcup \mathcal{H}(gP_i,d_i)^{(1)} \right),
\]
where the inner union ranges over all $i=1,\dots,n$ and all left cosets
$gP_i$ of $P_i$,
and where each coset $gP_i \subseteq \Gamma^{(0)}$ is identified with
$gP_i \subset \mathcal{H}(gP_i,d_i)$ in the obvious way.
\end{defn}

Observe that $X$ is locally finite since each combinatorial horoball
is locally finite and the collection of cosets $gP_i$
is locally finite in $G$
(i.e., each finite subset of $G$ intersects only finitely many such cosets).
Furthermore $X$ is connected because $\Set{S}$ is
a relative generating set.
The natural action of $G$ on $X$ is properly discontinuous and isometric.

The next theorem follows from arguments of Groves--Manning
in \cite[Section~3.3]{GrovesManning08}.
Groves--Manning assume that $G$ is finitely generated, but their proofs 
remain valid when $G$ is countable and $X$ is constructed as above.
The only difference here is that we have imposed a proper, left invariant 
metric on each peripheral subgroup, while Groves--Manning metrize each 
peripheral subgroup using the word metric for a finite generating set.

\begin{thm}
\label{thm:AugmentedHyperbolic}
Suppose $G$ is countable and $\PP$ is a finite collection of subgroups
of $G$.
Choose a finite relative generating set $\Set{S}$ for $(G,\PP)$.
For each $P_i \in \PP$ choose a proper, left invariant metric $d_i$.
If $(G,\PP)$ has a finite relative presentation with a
linear relative Dehn function, then the augmented space $X$ is
$\delta$--hyperbolic.
\qed
\end{thm}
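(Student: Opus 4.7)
The plan is to adapt the proof of Groves--Manning \cite[Section~3.3]{GrovesManning08} from finitely generated groups equipped with word metrics on their peripheral subgroups to countable groups equipped with proper, left-invariant metrics $d_i$. The goal is to establish a linear isoperimetric inequality for edge loops in $X$, which together with local finiteness of $X$ implies $\delta$-hyperbolicity by the standard Dehn-function characterization of hyperbolic graphs.

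Given an edge loop $\gamma$ of length $n$ in $X$, I would first decompose $\gamma$ into maximal subpaths lying inside a single combinatorial horoball $\mathcal{H}(gP_i,d_i)$, separated by segments in $\Gamma = \Cayley(G,\Set{S})$. Each maximal horoball subpath joining two points $x,y$ at depth $0$ can be replaced, at linear cost, by a path that ascends vertically to depth $k$, traverses at most one horizontal edge, and descends vertically back to depth $0$. The key combinatorial point is that at depth $k$ the horizontal edges connect vertices at $d_i$-distance up to $2^k$, so the required depth grows only logarithmically in the $d_i$-distance between entry and exit, and this estimate is insensitive to the specific choice of proper left-invariant metric.

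Next I would project the modified loop to a loop $\hat\gamma$ in the coned-off Cayley graph $\hat\Gamma(G,\PP,\Set{S})$ by collapsing each horoball excursion to a length-two path through a cone point $v(gP_i)$. Reading off edge labels yields a word $W$ in $\Set{S}\cup\Set{P}$ of length at most $Cn$ representing the identity in $G$, for a uniform constant $C$ depending only on the construction above. By hypothesis the relative Dehn function is linear, so $W$ admits an expression as a product $\prod_{i=1}^\ell f_i^{-1} R_i f_i$ in $K$ with $\ell \le C' n$ and $R_i \in \Set{R}$. Geometrically this is a van Kampen diagram for $\hat\gamma$ over the finite relative presentation $\presentation{\PP,\Set{S}}{\Set{R}}$ with at most $C' n$ two-cells, of two types: cells labeled by relators from the finite set $\Set{R}$, and cells labeled by trivial words in a single factor $\tilde P_i$.

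The final and most delicate step is to lift this abstract diagram back to an $X$-filling of the original $\gamma$ with area linear in $n$. Cells of the first type lift directly to regions of $X$ of uniformly bounded perimeter and hence of bounded area, so they contribute only $O(n)$ to the total area. Cells of the second type must be filled inside the corresponding combinatorial horoball $\mathcal{H}(gP_i,d_i)$, and this is the step I expect to be the main technical obstacle. Groves--Manning handle these peripheral cells via explicit ascent-traverse-descend fillings inside $\mathcal{H}(P_i,d_i)$ whose area is controlled by the horoball structure alone, depending only on the doubling of the horizontal jump radius at each level and not on any presentation of $P_i$. Once one verifies that their explicit fillings and the accompanying area estimates use only the properties of $(P_i,d_i)$ as a proper, left-invariant metric space, the argument transports directly to the present setting and the linear isoperimetric bound for $\gamma$ follows.
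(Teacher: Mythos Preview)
Your proposal is correct and matches the paper's approach: the paper does not give an independent proof but simply asserts that Groves--Manning's arguments from \cite[Section~3.3]{GrovesManning08} go through verbatim once one replaces the word metric on each $P_i$ by an arbitrary proper, left-invariant metric $d_i$. Your outline is in fact more detailed than the paper's treatment, and correctly identifies the one place where care is needed, namely that the horoball fillings depend only on the level-doubling structure of $\mathcal{H}(P_i,d_i)$ and not on any presentation of $P_i$.
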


\section{Equivalence of notions of relative hyperbolicity for countable 
         groups}
\label{sec:RelHypEquiv}

The goal of this section is the following theorem.

\begin{thm}\label{thm:RelHypEquivalent}
The six definitions of relative hyperbolicity given above are equivalent.
In addition, condition \textup{(RH-5)} does not depend on the choice
of relative generating set.
\end{thm}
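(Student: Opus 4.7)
The plan is to organize the proof as follows: first, show that the ``geometric'' definitions (RH-1), (RH-2), (RH-3), (RH-4) are equivalent for arbitrary countable groups by adapting arguments of Bowditch and Tukia from the literature; second, establish the central new implication (RH-6) $\Longrightarrow$ (RH-3) via the cusped space of Section~4; third, use an isoperimetric argument to obtain (RH-3) $\Longrightarrow$ (RH-6), closing the cycle; and fourth, in the finitely generated case, combine Osin's theorem with an argument of Dahmani to establish (RH-5) $\Longleftrightarrow$ (RH-6) and the independence of the choice of finite relative generating set.

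For the first part, most of the work consists in verifying that proofs already in the literature go through for countable (not necessarily finitely generated) groups. Specifically: (RH-3) $\Longrightarrow$ (RH-2) is immediate; (RH-2) $\Longrightarrow$ (RH-1) follows from Tukia's theorem \cite{Tukia94} that a proper isometric action on a proper $\delta$-hyperbolic space induces a convergence action on $\boundary X$, noting that geometrical finiteness of the action on $X$ transfers to the induced action on $\boundary X$; and the equivalence of (RH-1), (RH-2), (RH-3), (RH-4) among themselves is due to Bowditch \cite{BowditchRelHyp}, whose proofs are purely dynamical or combinatorial and make no essential use of finite generation. I would read through Bowditch's arguments step-by-step, noting each place a finite generation hypothesis appears and confirming that it can either be removed outright or replaced by the analogous countable hypothesis.

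The central new implication is (RH-6) $\Longrightarrow$ (RH-3), which uses the augmented space $X$ of Section~4. By Theorem~4.2, $X$ is a proper, $\delta$-hyperbolic metric space on which $G$ acts properly discontinuously by isometries. To exhibit a cusp uniform structure, I would take $U$ to be the $G$-equivariant disjoint union, over all cosets $gP_i$, of the ``upper halves'' $\{\, (x,k) : k \ge k_0 \,\}$ of the combinatorial horoballs $\mathcal{H}(gP_i,d_i)$, for a sufficiently large $k_0$. The key verifications are: (i) each upper half is a horoball in the sense of Section~2, centered at the visual boundary point represented by any vertical ray, with the height function $(x,k) \mapsto -k$ serving as a horofunction; (ii) for $k_0$ large enough the upper halves are pairwise disjoint, using local finiteness of the coset family together with the fact that horizontal edges appear only on scale $2^k$; and (iii) $X - U$ is $G$-cocompact, because $\Cayley(G,\Set{S})$ together with the $P_i$-quotients of the truncated horoballs $\mathcal{H}(P_i,d_i) \cap \{k<k_0\}$ provides a compact fundamental domain, using the finiteness of $\Set{S}$. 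The reverse implication (RH-3) $\Longrightarrow$ (RH-6) then follows from an isoperimetric argument in the cusp uniform space: thin geodesic triangles bounding null-homotopic loops are converted into relations of bounded complexity, yielding a linear relative Dehn function; this follows the scheme used by Groves--Manning \cite{GrovesManning08} with only cosmetic changes to accommodate the countable setting.

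The final piece is (RH-5) in the finitely generated case. The equivalence (RH-5) $\Longleftrightarrow$ (RH-6) is Osin's theorem \cite{Osin06}. Independence of (RH-5) from the finite relative generating set is more delicate because two such sets $\Set{S}, \Set{S}'$ give rise to potentially disconnected Cayley graphs of $G$, so the standard quasi-isometry argument fails. Here I would follow Dahmani \cite{DahmaniThesis}: the coned-off Cayley graphs $\hat\Gamma(G,\PP,\Set{S})$ and $\hat\Gamma(G,\PP,\Set{S}')$ share the same cone points $v(gP)$, and replacing each $\Set{S}$-edge by a bounded word in $\Set{S}' \cup \bigcup_i P_i$ (routing through cone points whenever the word enters a peripheral subgroup) sends quasigeodesics without backtracking in one to quasigeodesics without backtracking in the other with controlled parameters, so hyperbolicity and BCP transfer. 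The main anticipated obstacle in the whole theorem lies in the (RH-6) $\Longrightarrow$ (RH-3) step, specifically the choice of $k_0$ ensuring pairwise disjointness of horoballs while preserving cocompactness of the truncated space, together with a clean identification of the parabolic fixed points of the augmented space with the peripheral cosets.
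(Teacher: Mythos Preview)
Your overall scheme has two genuine breaks that the paper explicitly warns about and routes around.

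First, you propose to get the equivalence of (RH-1)--(RH-4) wholesale from Bowditch, asserting that his arguments ``make no essential use of finite generation.'' This is false for the implication (RH-4)~$\Longrightarrow$~(RH-3): the paper states plainly that Bowditch's proof of this direction ``uses the finite generation hypothesis in an essential manner, and the proof does not extend to non--finitely generated groups.'' You also omit Yaman's theorem, which is what the paper invokes for (RH-1)~$\Longrightarrow$~(RH-2); Bowditch does not construct the hyperbolic space from the compactum. So your four-way equivalence is not established: you have only (RH-3)~$\Longrightarrow$~(RH-2)~$\Longrightarrow$~(RH-1) and (RH-3)~$\Longrightarrow$~(RH-4), leaving (RH-4) and (RH-1) as dead ends.

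Second, you assert that ``(RH-5)~$\Longleftrightarrow$~(RH-6) is Osin's theorem'' in the finitely generated case. But the theorem is about \emph{countable} groups with a finite \emph{relative} generating set, not finitely generated groups, and the paper notes that Osin's argument for (RH-6)~$\Longrightarrow$~(RH-5) ``uses finite generation extensively, and it is not clear whether the same arguments go through in general.'' Only the direction (RH-5w)~$\Longrightarrow$~(RH-6) survives for countable groups. So your cycle does not reach (RH-5) from anywhere.

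The paper closes both gaps simultaneously by a different routing: it uses Dahmani's fine-graph arguments to prove (RH-4)~$\Longrightarrow$~(RH-5s) and (RH-5w)~$\Longrightarrow$~(RH-4) for countable groups, then Osin's (RH-5w)~$\Longrightarrow$~(RH-6), then the cusped-space step (RH-6)~$\Longrightarrow$~(RH-3). Thus (RH-4) re-enters the cycle via (RH-5) and (RH-6), never via Bowditch's construction; and (RH-5) is reached from (RH-4), never from (RH-6). Independence of (RH-5) from the relative generating set is a byproduct of (RH-4)~$\Longrightarrow$~(RH-5s), since (RH-4) mentions no generating set. Your proposed direct isoperimetric argument (RH-3)~$\Longrightarrow$~(RH-6) may well work and would shorten one leg, but it does not repair the two broken links above; you still need Dahmani's (RH-4)~$\Longrightarrow$~(RH-5s) to recover (RH-4) and (RH-5) in the countable setting.
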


Our strategy for proving the theorem is to explain the following implications:
\[
\xymatrix{
   \text{(RH-1)} \ar@{<=>}[r] & \text{(RH-2)} \ar@{<=>}[r] &
      \text{(RH-3)} \ar@{=>}[r] & \text{(RH-4)} \ar@{=>}[d] \\
           &   \text{(RH-6)} \ar@{=>}[ur] & \text{(RH-5w)} \ar@{=>}[l] \ar@{=>}[ur]
           &   \text{(RH-5s)} \ar@{=>}[l]
}
\]
where (RH-5s) denotes the strong condition that (RH-5) holds
for every finite relative generating set, and (RH-5w) denotes the weak condition
that it holds for some particular finite relative generating set.
We have included the implication (RH-5w) $\Longrightarrow$~(RH-4)
in order to give a more direct proof of the equivalence of the weak and strong
forms of (RH-5).

In fact most of the ingredients already exist in the literature.
Some are proved completely, and others are proved for the finitely generated
case but the proofs don't really use finite generation.
In several places we replace the Cayley graph for a finite generating set
with the (possibly disconnected) Cayley graph for a finite relative
generating set.
With this small change of perspective, the proofs in the literature
go through almost verbatim.

As explained in the previous section,
the most significant changes occur in our proof of
(RH-6) $\Longrightarrow$~(RH-3),
where we need to modify the Groves--Manning construction of combinatorial
horoballs to account for non--finitely generated groups.
Note that we also use combinatorial horoballs to prove
(RH-1) $\Longrightarrow$~(RH-2) in the elementary parabolic case.

The first implication we discuss is a consequence of the following theorem 
due to Yaman.

\begin{thm}[\cite{Yaman04}]
\label{thm:Yaman}
If $G$ acts as a nonelementary geometrically finite convergence group
on a metrizable compactum $M$, then there is a proper $\delta$--hyperbolic
space $X$ on which $G$ acts properly
such that $M$ is $G$--equivariantly homeomorphic to $\boundary X$
with its induced $G$--action.
\end{thm}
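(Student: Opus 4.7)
The plan is to extract from the dynamical data on $M$ a pair $(G,\PP)$ together with a fine hyperbolic graph, and then attach combinatorial horoballs to manufacture a proper $\delta$--hyperbolic space $X$ with $\boundary X\cong M$. First I fix representatives $p_1,\dots,p_n$ of the $G$--orbits of bounded parabolic points (finite in number by the standard consequence of geometrical finiteness for nonelementary convergence groups), let $P_i=\Stab_G(p_i)$, and set $\PP=\{P_1,\dots,P_n\}$. Following Bowditch, I would build a connected graph $K$ whose vertex set is the disjoint union of one ``peripheral'' vertex $v(gP_i)$ for each left coset $gP_i$ (corresponding to the parabolic point $gp_i$) and a free $G$--orbit of ``ordinary'' vertices, with edges inserted $G$--equivariantly so that every edge stabilizer is finite, only finitely many $G$--orbits of edges occur, and the vertex stabilizer at $v(gP_i)$ is $gP_ig^{-1}$. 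Using the expansion dynamics at conical limit points and the cocompactness of $P_i$ on $M-\{p_i\}$, one checks that $K$ is fine and $\delta$--hyperbolic; this is exactly definition (RH-4) and is the combinatorial content of Bowditch's and Yaman's construction.

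Next I would choose a proper, left--invariant metric $d_i$ on each $P_i$ (which exists because $P_i$ is countable), and enlarge $K$ to a space $X$ by replacing each peripheral vertex $v(gP_i)$ with a copy of the combinatorial horoball $\mathcal{H}(gP_i,d_i)$ from Section~\ref{sec:CuspedSpace}, glued along the coset $gP_i$. The action of $G$ on $X$ is isometric and properly discontinuous because edge stabilizers of $K$ are finite, each horoball is locally finite, and the horoballs are $G$--equivariantly interchanged. Hyperbolicity of $X$ follows from the hyperbolicity of $K$ together with the hyperbolicity of each horoball, assembled by a thin--triangle argument essentially identical to the proof of Theorem~\ref{thm:AugmentedHyperbolic}.

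To produce the $G$--equivariant homeomorphism $\boundary X\to M$, note that each horoball $\mathcal{H}(gP_i,d_i)$ is centered at a unique point of $\boundary X$ which is sent to the parabolic fixed point $gp_i$; a conical limit point $\xi\in M$ is realized by a sequence $(g_k)$ in $G$ with $g_k\cdot o\to\xi$ in $M$ for some base vertex $o\in X$, and the corresponding orbit points $g_k\cdot o$ in $X$ converge to a well-defined point of $\boundary X$ independent of the choice of $o$ and $(g_k)$. Continuity and bijectivity follow from the standard north--south dynamics of convergence actions together with the cusp--uniform structure of $X$; geometrical finiteness of the induced action on $\boundary X$ is then automatic.

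The main obstacle is the first stage: producing the fine $\delta$--hyperbolic graph $K$ purely from the convergence action without any a priori metric on $G$. This is the technical core of Yaman's argument, and it is precisely the hypothesis that every parabolic point is \emph{bounded} that allows one to select only finitely many $G$--orbits of edges while maintaining fineness. Once $K$ is in hand, the remaining steps are largely bookkeeping, with the mild novelty that combinatorial horoballs based on an arbitrary proper, left--invariant metric (rather than a word metric on a finitely generated peripheral group) are needed to remove the finite generation hypothesis present in Yaman's original statement.
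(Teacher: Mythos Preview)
The paper does not prove this theorem; it is quoted from Yaman \cite{Yaman04} and used as a black box in the proof of Corollary~\ref{cor:RH12}. So there is no ``paper's own proof'' to compare against.

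That said, your outline is a reasonable strategy, and you correctly identify the genuine content: producing the fine $\delta$--hyperbolic graph $K$ from the convergence action on $M$ is the whole theorem, and you defer it entirely to ``Bowditch's and Yaman's construction.'' What remains in your sketch is essentially the implication (RH-4)~$\Longrightarrow$~(RH-2), which the paper obtains via the chain (RH-4)~$\Longrightarrow$~(RH-5s)~$\Longrightarrow$~(RH-6)~$\Longrightarrow$~(RH-3)~$\Longleftrightarrow$~(RH-2).

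One technical point deserves care. You propose to attach combinatorial horoballs directly to the fine graph $K$ and assert that hyperbolicity follows by ``a thin--triangle argument essentially identical to the proof of Theorem~\ref{thm:AugmentedHyperbolic}.'' But Theorem~\ref{thm:AugmentedHyperbolic} (following Groves--Manning) takes as input a \emph{linear relative Dehn function}, i.e.\ condition (RH-6), not merely a fine hyperbolic graph; the isoperimetric input is used in an essential way to control fillings that cross horoballs. A direct horoball-attachment argument starting only from (RH-4) is not what that theorem provides. The paper sidesteps this by passing through (RH-6) first; Bowditch's own direct construction from (RH-4) to a cusped space exists but, as the paper notes after Theorem~5.4, uses finite generation in a way that does not obviously extend. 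So either route through the paper's implications, or supply an independent combination argument---but do not cite Theorem~\ref{thm:AugmentedHyperbolic} for this step.

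Finally, identifying $\boundary X$ with $M$ requires more than the sentence you give it: one must show the map is well-defined, continuous, injective, and surjective, and for this one typically invokes Theorem~\ref{thm:BoundaryWellDefined} (uniqueness of the Bowditch boundary) rather than arguing directly.
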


\begin{cor}\label{cor:RH12}
Definitions \textup{(RH-1)} and \textup{(RH-2)} are equivalent.
\end{cor}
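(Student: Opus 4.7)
The plan is to prove each implication separately, with the harder direction (RH-1)~$\Longrightarrow$~(RH-2) splitting into cases according to $|M|$.

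The direction (RH-2)~$\Longrightarrow$~(RH-1) is essentially tautological: Tukia's theorem, cited in Section~\ref{subsec:GeomFinite}, asserts that the properly discontinuous action of $G$ on the proper $\delta$--hyperbolic space $X$ induces a convergence group action on the compact, metrizable space $\boundary X$. The hypothesis of (RH-2) is precisely that this action is geometrically finite with $\PP$ representing the conjugacy classes of maximal parabolic subgroups, so (RH-1) follows with $M=\boundary X$.

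For (RH-1)~$\Longrightarrow$~(RH-2) in the nonelementary case $|M|\ge 3$, Yaman's Theorem~\ref{thm:Yaman} directly produces a proper $\delta$--hyperbolic space $X$ on which $G$ acts properly, together with a $G$--equivariant homeomorphism $M\cong\boundary X$. This homeomorphism carries parabolic points to parabolic points and preserves their stabilizers, so $\PP$ remains a full set of conjugacy class representatives for the maximal parabolics of the induced action on $\boundary X$. The elementary sub-cases $|M|\in\{0,2\}$ admit ad hoc models. If $M=\emptyset$ then $G$ is finite, $\PP=\emptyset$, and $X$ may be taken to be a single point. If $|M|=2$ then $G$ is infinite virtually cyclic (finite $G$ is ruled out because it would produce neither conical nor parabolic points), each point of $M$ is a conical limit point fixed by a loxodromic element, $\PP=\emptyset$, and we take $X=\R$ with $G$ acting properly and cocompactly via its canonical surjection onto $\Z$ or the infinite dihedral group.

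The main obstacle is the case $|M|=1$, where $G$ may be any countably infinite group (finiteness is again ruled out by geometric finiteness), the unique point is a bounded parabolic point with $\Stab_G(p)=G$, and Yaman's theorem does not apply. Here I construct $X$ directly using the combinatorial horoballs of Section~\ref{sec:CuspedSpace}: choose any proper, left-invariant metric $d$ on $G$ (such a metric exists on every countable group) and set $X=\mathcal{H}(G,d)$. The theorems of Section~\ref{sec:CuspedSpace} guarantee that $X$ is a proper, connected, $\delta$--hyperbolic graph, and $G$ acts isometrically and properly discontinuously by left translation on the first factor. The space $X$ has a unique boundary point, namely the ``top'' of the horoball, fixed by all of $G$; since $\boundary X$ has only this one point, no element of $G$ can be loxodromic, so every infinite subgroup of $G$ is parabolic and in particular $G$ itself is parabolic. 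The lone parabolic point is vacuously bounded (its complement in $\boundary X$ is empty), so the induced convergence action on $\boundary X$ is geometrically finite with $\PP=\{G\}$ the correct peripheral structure, establishing (RH-2).
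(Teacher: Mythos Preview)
Your proof is correct and follows essentially the same approach as the paper's: the easy direction is immediate, the nonelementary case is Yaman's theorem, and the three elementary cases use a point, the combinatorial horoball $\mathcal{H}(G,d)$, and the real line respectively. The paper additionally observes that in each elementary case the induced action on $\boundary X$ is $G$--equivariantly equivalent to the given action on $M$ (and arranges the orientation of the $\R$--action accordingly), which you omit; this is not strictly needed to establish (RH-2), though it is implicitly invoked later when the paper appeals to uniqueness of the boundary.
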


\begin{proof}
The implication (RH-2)~$\Longrightarrow$~(RH-1) is obvious.
Now suppose $(G,\PP)$ satisfies (RH-1).
If the action of $G$ on $M$ is nonelementary, then we are done by
Theorem~\ref{thm:Yaman}.

If the action on $M$ is elementary, it is a simple matter to construct an
appropriate hyperbolic space $X$ as follows.
If $M$ is empty, then $G$ acts trivially on a point $X=\{a\}$.
If $M$ has one point then we choose a proper, left invariant metric $d$
on $G$, and let $G$ act properly on the combinatorial horoball
$X=\Set{H}(G,d)$.
If $M$ has two points, then $G$ is virtually cyclic and acts properly
on the line $X=\R$.
In this last case, arrange the action so that $g \in G$
preserves the orientation of $\R$
if and only if $g$ fixes both points of $M$.
In each case it is clear that the induced action on $\boundary X$
is equivalent to the given action on $M$, establishing (RH-2).
\end{proof}

The equivalence (RH-2)~$\Longleftrightarrow$~(RH-3) was discovered by Thurston
in the classical setting of $3$--dimensional Kleinian groups \cite{ThurstonNotes},
and has been extended to various manifold cases
by Apanasov and Bowditch \cite{Apanasov83,Bowditch93,Bowditch95}.
The following result due to Bowditch \cite{BowditchRelHyp}
extends this equivalence to actions on $\delta$--hyperbolic spaces.

\begin{thm}[\cite{BowditchRelHyp}]
\label{thm:GeomFiniteCuspUniform}
Let $G$ act properly on a proper $\delta$--hyperbolic space $X$.
Let $\PP$ be a finite family of subgroups of $G$.
Then the action of $(G,\PP)$ on $X$ is geometrically finite
if and only if it is cusp uniform.
\end{thm}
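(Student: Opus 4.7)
The plan is to prove the two implications separately, using the convergence action of $G$ on $\partial X$ (coming from Tukia's theorem, cited just before (RH-1)) together with the horoball geometry developed in Section \ref{sec:Horoballs}. Throughout I would work from a truncated-space viewpoint and convert dynamical statements about $\partial X$ into geometric statements about geodesic rays in $X$.

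For the direction cusp uniform $\Longrightarrow$ geometrically finite, I would fix a truncated space $Y = X - U$, a basepoint $x_0 \in Y$, and a boundary point $\xi \in \partial X$, and study a geodesic ray $c$ from $x_0$ to $\xi$. Lemmas \ref{lem:NearHoroballComplement} and \ref{lem:BetweenHoroballs} force every excursion of $c$ into a horoball of $U$ to have bounded length \emph{unless} $c$ is eventually contained in a single horoball, in which case $\xi$ is the center of that horoball and hence a parabolic point. In the alternative case, there is a sequence $t_n \to \infty$ with $c(t_n) \in Y$, so the $G$-cocompactness of $Y$ supplies $g_n \in G$ with $g_n c(t_n)$ in a fixed compact set, and the standard thin-triangle argument identifies $\xi$ as a conical limit point. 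To verify that each parabolic point $p$ is \emph{bounded}, I would use that $\Stab_G(p)$ acts cocompactly on the intersection of the horosphere $\partial B_p$ with $Y$ (a consequence of the $G$-cocompactness on $Y$ together with proper discontinuity) and that the ``shadow'' map, sending $x \in \partial B_p \cap Y$ to the endpoint at infinity of the outward geodesic ray through $x$, gives a $\Stab_G(p)$-equivariant continuous surjection onto $\partial X - \{p\}$.

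For the reverse direction, geometrically finite $\Longrightarrow$ cusp uniform, the finiteness of $\PP$ ensures only finitely many $G$-orbits of parabolic points, with representatives $p_1, \ldots, p_n$. I would pick horoballs $B_i$ centered at each $p_i$ deep enough that their $G$-translates form a disjoint family, letting $U$ be their union; disjointness combines Lemma \ref{lem:BetweenHoroballs} (a horoball is coarsely determined by its center at infinity) with proper discontinuity of the action. For cocompactness of $G$ on $Y = X - U$, I would argue by contradiction: a sequence $x_n \in Y$ with no convergent $G$-image, after passing to a subsequence and applying $G$-translations, must accumulate at some $\xi \in \partial X$. If $\xi$ is conical, suitable $h_n \in G$ drag the $x_n$ into a fixed compact set, a contradiction. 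If $\xi$ is a bounded parabolic point with stabilizer $P = \Stab_G(\xi)$, the cocompactness of $P$ on $\partial X - \{\xi\}$ together with the geometry of $B_\xi$ lets us replace $x_n$ by $P$-translates lying in a fixed compact neighborhood of $\partial B_\xi \cap Y$, again a contradiction.

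The main obstacle is the reverse direction, specifically the simultaneous choice of horoballs that are deep enough for $G$-translates to be disjoint and yet shallow enough that the complement admits a cocompact action. Both aspects require upgrading the purely dynamical hypotheses (conical approximation, bounded parabolicity) into uniform geometric estimates in $X$ through the horofunction and center-of-triangle machinery of Section \ref{sec:Horoballs}, and handling the fact that the horoballs in question are only coarsely canonical objects.
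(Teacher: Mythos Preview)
The paper does not give its own proof of this theorem; it is quoted as a result of Bowditch \cite{BowditchRelHyp} and used as a black box. Your outline is essentially the strategy Bowditch employs, so in that sense there is nothing to compare against within this paper.

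A small technical comment on your sketch: Lemmas~\ref{lem:NearHoroballComplement} and~\ref{lem:BetweenHoroballs} do not by themselves establish the dichotomy ``either the ray is eventually trapped in one horoball or it returns to $Y$ infinitely often.'' Those lemmas bound the length of geodesic \emph{segments} lying in certain regions; what you actually need is the elementary fact that a geodesic ray which remains in a horoball $B$ must converge to the center of $B$ (equivalently, the only point of $\partial X$ in the closure of $B$ is its center). Once you have that, the dichotomy is immediate, and the conical--limit--point argument via cocompactness of $Y$ proceeds exactly as you describe. The lemmas you cite are more relevant to the bounded--parabolic verification and to the reverse implication, where one needs uniform control on how geodesics interact with the horoball family. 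Your identification of the genuine difficulty---simultaneously arranging disjointness of the horoballs and cocompactness of the complement---is accurate, and this is precisely where Bowditch's argument does the real work.
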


\begin{cor}
Definitions \textup{(RH-2)} and \textup{(RH-3)} are equivalent. \qed
\end{cor}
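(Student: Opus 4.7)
The corollary is an essentially immediate consequence of Theorem~\ref{thm:GeomFiniteCuspUniform}. Definitions (RH-2) and (RH-3) both start with the same data --- a countable group $G$ acting properly discontinuously on a proper $\delta$--hyperbolic space $X$, together with $\PP$ taken to be a set of conjugacy-class representatives of the maximal parabolic subgroups --- and differ only in what is required of the action: (RH-2) demands that the induced convergence action on $\partial X$ be geometrically finite, while (RH-3) demands the existence of a $G$--equivariant family of disjoint horoballs with cocompact complement, i.e.\ cusp-uniformity. The theorem of Bowditch quoted above says exactly that these two conditions on the action of $(G,\PP)$ on $X$ coincide.

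My plan therefore has two parallel halves. For (RH-3)~$\Longrightarrow$~(RH-2), I would take the cusp-uniform action of $(G,\PP)$ on $X$ furnished by (RH-3) and apply Theorem~\ref{thm:GeomFiniteCuspUniform} to conclude that the induced convergence action on $\partial X$ is geometrically finite; since $\PP$ is already the set of conjugacy-class representatives of the maximal parabolic subgroups, this is precisely (RH-2). For the converse direction (RH-2)~$\Longrightarrow$~(RH-3), I would start with the geometrically finite action on $X$ provided by (RH-2), apply Theorem~\ref{thm:GeomFiniteCuspUniform} again to obtain a $G$--equivariant family of disjoint horoballs centered at the parabolic points of $G$ with cocompact complement in $X$, and observe that the peripheral system $\PP$ from (RH-2) is unchanged, so the data assemble into an instance of (RH-3).

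There is essentially no obstacle: the only point one should verify in passing is that the notion of ``maximal parabolic subgroup'' means the same thing in both definitions. But in both cases the parabolic subgroups are those infinite subgroups of $G$ that fix a unique point of $\partial X$ under the convergence action induced by Tukia's theorem, so the peripheral structure $\PP$ is intrinsic to the action on $X$ and does not depend on which of the two equivalent geometric formulations one prefers. Accordingly the corollary is immediate from Theorem~\ref{thm:GeomFiniteCuspUniform} and a single line in each direction suffices.
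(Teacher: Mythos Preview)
Your proposal is correct and matches the paper's approach exactly: the corollary is stated with a \qed\ and no separate proof, since it is immediate from Theorem~\ref{thm:GeomFiniteCuspUniform}. Your expanded explanation of the two directions and the remark on the peripheral structure are accurate elaborations of what the paper leaves implicit.
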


The following is one of the main theorems of \cite{BowditchRelHyp}.
We note that the result is stated with the extra assumption that the peripheral
subgroups are finitely generated, but this hypothesis plays no role in the
proof and is present only to make the result consistent with Bowditch's convention
that peripheral subgroups should be finitely generated.

\begin{thm}[\cite{BowditchRelHyp}]
Definition \textup{(RH-3)} implies \textup{(RH-4)}.
\end{thm}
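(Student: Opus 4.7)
Starting from the cusp uniform action of $(G,\PP)$ on a proper $\delta$--hyperbolic space $X$, I would build a fine hyperbolic graph $K$ as follows. Let $U = \bigsqcup_{p \in V_\infty} B_p$ be the $G$--equivariant system of disjoint open horoballs, and $Y = X - U$ the truncated space on which $G$ acts cocompactly. Fix a compact $K_0 \subseteq Y$ with $G \cdot K_0 = Y$, a basepoint $y_0 \in Y$, and a constant $R$ much larger than $\diam(K_0)$. Take $K$ to be the graph with vertex set $V_\infty \cup (G \cdot y_0)$, where two distinct vertices $u, v$ are joined by an edge precisely when $d_X(B_u, B_v) \le R$ (setting $B_v = \{v\}$ when $v \in G \cdot y_0$). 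The isometric $G$--action on $X$ restricts to a simplicial $G$--action on $K$.

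The easier properties of (RH-4) then follow from standard arguments. Connectedness comes from tracking any geodesic in $X$ between two vertices: by cocompactness of $G$ on $Y$ and Lemma~\ref{lem:NearHoroballComplement}, such a geodesic produces an edge path in $K$ where consecutive vertices satisfy the closeness condition. Properness of the $G$--action on $X$, together with the compactness of a fundamental region, yields finitely many orbits of edges. The stabilizer of an edge $\{u,v\}$ is contained in $\Stab_G(u) \cap \Stab_G(v)$, which is finite: two distinct maximal parabolic subgroups intersect in a finite group, since an infinite parabolic subgroup cannot fix two distinct parabolic points. The vertices in $G \cdot y_0$ have finite stabilizers by properness of the action on $X$, while the infinite vertex stabilizers are exactly the maximal parabolic subgroups $P_p$, yielding $\PP$ up to conjugacy.

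The two remaining points are hyperbolicity and fineness of $K$. Hyperbolicity follows by exhibiting a $G$--equivariant quasi-isometry between $K$ and the ``electrified'' space obtained from $X$ by coning off each horoball $B_p$ to its center; that space is hyperbolic by a standard thin-triangles argument combined with Lemmas~\ref{lem:NearHoroballComplement}--\ref{lem:GeodesicInHoroball}, and the quasi-isometry transfers the bound to $K$. Fineness is the main obstacle: I must show that for each edge $e$ and each $n$, only finitely many length--$n$ circuits of $K$ contain $e$. Such a circuit corresponds to a cyclic sequence of $n$ vertices of $K$ with consecutive pairs $X$--close, so the entire circuit is confined to an $nR$--neighborhood of the horoballs or points at the endpoints of $e$. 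Since $X$ is proper, this neighborhood is compact; properness of the $G$--action, together with the equivariant disjointness of the horoballs, implies that only finitely many vertices of $K$ lie in any compact subset of $X$, bounding the number of such circuits and completing the proof.
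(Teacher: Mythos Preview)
Your overall strategy---forming a graph $K$ on the parabolic points together with an orbit in the truncated space, with edges recording $X$--proximity of the associated horoballs---is essentially Bowditch's construction, and the paper itself simply cites \cite{BowditchRelHyp} for this implication rather than reproving it. So the broad outline is correct.

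However, your fineness argument has a genuine gap. You assert that a length--$n$ circuit through an edge $e=\{u,v\}$ is ``confined to an $nR$--neighborhood of the horoballs or points at the endpoints of $e$,'' and then invoke properness of $X$ to deduce compactness. Neither step is justified. First, consecutive vertices $v_i,v_{i+1}$ in the circuit satisfy only $d_X(B_{v_i},B_{v_{i+1}})\le R$, which means \emph{some} point of $B_{v_{i+1}}$ lies within $R$ of $B_{v_i}$; but since each $B_{v_i}$ is an unbounded horoball, the point of $B_{v_{i+1}}$ closest to $B_{v_{i+2}}$ may lie arbitrarily far from $B_{v_i}$, so the triangle-inequality chain you are implicitly using fails. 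Second, even if the confinement held, an $nR$--neighborhood of a horoball is not compact, so properness of $X$ gives nothing.

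The actual mechanism behind fineness is more delicate: one lifts the circuit to a closed semipolygonal path in $X$ (alternating short jumps between horoballs with geodesic arcs inside them), and then uses $\delta$--hyperbolicity of $X$---via thinness of geodesic $2n$--gons, or via Lemma~\ref{lem:LiftIsQuasigeodesic}---to bound the depth to which the path penetrates each horoball. This yields a genuine bound on the $X$--diameter of the lifted path, after which local finiteness of the horoball family gives the conclusion. Your sketch omits exactly this hyperbolic input.
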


As an aside, we remark that Bowditch also proves the converse
(RH-4) $\Longrightarrow$~(RH-3) for
finitely generated groups.  However, his proof uses the finite generation hypothesis
in an essential manner, and the proof does not extend to non--finitely
generated groups.  This implication is not necessary for our purposes.

Recall that in the finitely generated case (RH-5s) and (RH-5w)
are equivalent by a standard argument.
This standard argument does not extend in the obvious way
to the non--finitely generated case.

Bowditch claims without proof in \cite{BowditchRelHyp}
that (RH-4) and (RH-5) are equivalent
for finitely generated groups.
Dahmani provides a sketch in \cite{Dahmani03Combination} and a complete 
proof in an appendix to his thesis \cite{DahmaniThesis}.
Dahmani's proof extends with trivial modifications to the general case.
For the benefit of the reader, we sketch the outline of a proof based on 
Dahmani's arguments indicating places where modifications must be made.


\begin{proof}[Proof of \textup{(RH-4)}~$\Longrightarrow$~\textup{(RH-5s)}]
Suppose $G$ acts on a fine, $\delta$--hyperbolic graph $K$
with finite edge stabilizers and finitely many orbits of edges.
Choose edges representing the $G$--orbits, and let $\{v_1,\dots,v_\ell\}$
be the set of all vertices incident to any of these finitely many edges.
If $v_i$ and~$v_j$ lie in the same $G$--orbit, choose
$g_{ij} \in G$ so that $g_{ij}(v_i)=v_j$.
Then $G$ is generated by the stabilizers of the $v_i$ and
the finitely many elements $g_{ij}$
(see, for instance, Bridson--Haefliger \cite[Theorem~I.8.10]{BH99}).
In particular, $G$ is finitely generated with respect to the infinite vertex stabilizers.

Now let $\Set{S}$ be an arbitrary finite relative generating set for
$(G,\PP)$.
Observe that the coned-off Cayley graph $\hat\Gamma:=\hat\Gamma(G,\PP,\Set{S})$
has trivial edge stabilizers and finitely many orbits of edges.
The argument in \cite[Lemma~A.4]{DahmaniThesis}
produces a fine graph $K'$
with finite edge stabilizers and finitely many orbits of edges such that $K$
and $\hat\Gamma$
both embed equivariantly and simplicially into $K'$.
Any subgraph of a fine graph is fine, so $\hat\Gamma$ is fine.
Furthermore, a connected, equivariant subgraph of $K'$
is quasi-isometric to $K'$.
Since $K$ is $\delta$--hyperbolic, it follows that $\hat\Gamma$
is quasi-isometric to $K$,
so $\hat\Gamma$
is $\delta'$--hyperbolic for some $\delta'\ge 0$.
Now by \cite[Lemma~A.5]{DahmaniThesis}, the fineness of
the coned-off Cayley graph $\hat\Gamma$ implies that
$(G,\PP,\Set{S})$ has Bounded Coset Penetration.
(The proof of \cite[Lemma~A.5]{DahmaniThesis}
never uses the connectedness of $\Gamma$,
so it remains valid when $\Set{S}$ is a relative generating set,
as opposed to a generating set in the traditional sense.)
\end{proof}

\begin{proof}[Proof of \textup{(RH-5w)}~$\Longrightarrow$~\textup{(RH-4)}]
This implication follows directly from \cite[Proposition~A.1]{DahmaniThesis}
with no change, as Dahmani's proof does not use the finite generation
hypothesis.
\end{proof}

\begin{proof}[Proof of \textup{(RH-5w)}~$\Longrightarrow$~\textup{(RH-6)}]
This implication is proved by Osin in the finitely generated case
in an appendix to \cite{Osin06}.
However, he never uses the finite generation hypothesis in the proof.
Again, the finite generation hypothesis seems to be present only to 
make the statement consistent with Farb's original version of (RH-5) for finitely generated groups.
\end{proof}

We remark that
Osin also gives a substantially more involved proof of the converse
(RH-6)~$\Longrightarrow$~(RH-5) for finitely generated groups.
These arguments use finite generation extensively,
and it is not clear whether the same arguments go through in general.
Note that we do not require this implication.

\begin{proof}[Proof of \textup{(RH-6)}~$\Longrightarrow$~\textup{(RH-3)}]
Choose any finite relative generating set $\Set{S}$ for $(G,\PP)$
and choose a proper, left invariant metric $d_i$ for each subgroup
$P_i \in \PP$.
By Theorem~\ref{thm:AugmentedHyperbolic}, the augmented space $X$ corresponding 
to these data is connected and $\delta$--hyperbolic for some $\delta\ge 0$.
It is straightforward to verify that the combinatorial horoballs
of $X$ are also horoballs as defined in Section~\ref{sec:Horoballs}. 
Deleting these horoballs from $X$ gives the (possibly disconnected)
Cayley graph $\Cayley(G,\Set{S})$ for $G$,
on which $G$ acts with compact quotient.
Therefore $(G,\PP)$ is relatively hyperbolic in the sense of (RH-3).
\end{proof}

\section{Relatively quasiconvex subgroups: definitions}
\label{sec:QCDef}

Now that we have established the notion of relative hyperbolicity for countable 
groups, we turn our attention to the most natural class of subgroups, namely 
the relatively quasiconvex subgroups.
In this section we introduce these subgroups from several different points of 
view, corresponding to the various characterizations of relative hyperbolicity.
In each setting, we describe a geometrically natural family of
``relatively quasiconvex'' subgroups.
As before, our main goal is to show that the various definitions of 
relative quasiconvexity are equivalent.
As part of this equivalence, we will show that each of the conditions below
is an intrinsic property of the subgroup $H$ in the relatively hyperbolic group
$(G,\PP)$, and does not depend on any specific choices made throughout
the definition.

Throughout this section, we assume that $G$ is countable,
$\PP=\{P_1,\dots,P_n\}$ is a finite collection of subgroups,
and that $(G,\PP)$ is relatively hyperbolic.

\begin{defn}[Quasiconvex subspace]
Let $X$ be a geodesic metric space.
A subspace $Y \subseteq X$ is $\kappa$--quasiconvex
for some $\kappa > 0$ if every geodesic of $X$
connecting two points of $Y$ lies in the $\kappa$--neighborhood
of $Y$.
\end{defn}

\begin{defn}[QC-1]
A subgroup $H \le G$ is \emph{relatively quasiconvex}
if the following holds.
Let $M$ be some (any) compact, metrizable space on which $(G,\PP)$ acts
as a geometrically finite convergence group.
Then the induced convergence action of $H$ on the limit set
$\Lambda H \subseteq M$ is geometrically finite.
\end{defn}

The next two definitions of relative quasiconvexity
allow us to recognize relatively quasiconvex subgroups in the setting of
Gromov's original definition of relative hyperbolicity.

To express the first, we need the notion of the join of a set in the boundary.

\begin{defn}[Join]
Let $X$ be $\delta$--hyperbolic, and suppose $\Lambda \subseteq \boundary X$
is a subset with at least two points.
The \emph{join} of $\Lambda$, denoted $\join(\Lambda)$,
is the union of all geodesic lines in $X$ joining pairs of points in $\Lambda$.
\end{defn}

\begin{rem}\label{rem:join}
One should think of the join as a ``quasiconvex hull.''
In fact, it is easy to see that if $X$ is $\delta$--hyperbolic,
then $\join(\Lambda)$ is a $\kappa$--quasiconvex subspace for some
$\kappa = \kappa(\delta)$ (see for instance Gromov
\cite[Section~7.5.A]{Gromov87}).
By an elementary argument,
$\join(\Lambda)$ is quasi-isometric to a geodesic hyperbolic space
$Y$ whose boundary $\boundary Y$ is canonically homeomorphic to $\Lambda$.
Any horofunction on $X$ based at a point of $\Lambda$ restricts to a
horofunction on $Y$.
Consequently a horoball of $X$ centered in $\Lambda$ restricts to
a horoball of $Y$.
\end{rem}

\begin{defn}[QC-2]
A subgroup $H \le G$ is \emph{relatively quasiconvex} if the following holds.
Let $X$ be some (any) proper $\delta$--hyperbolic space on which $(G,\PP)$
has a cusp uniform action.  Then either $H$ is finite, $H$ is parabolic, 
or $H$ has a cusp uniform action
on a geodesic hyperbolic space $Y$ quasi-isometric
to the subspace $\join(\Lambda H) \subseteq X$.
%
\end{defn}

\begin{defn}[QC-3]
A subgroup $H \le G$ is \emph{relatively quasiconvex} if the following holds.
Let $(X,\rho)$ be some (any) proper $\delta$--hyperbolic space on which
$(G,\PP)$ has a cusp uniform action.
Let $X-U$ be some (any) truncated space for $G$ acting on $X$.
For some (any) basepoint $x \in X - U$
there is a constant $\mu\ge 0$ such that whenever $c$ is a geodesic in $X$
with endpoints in the orbit $Hx$, we have
\[
   c \cap (X-U) \subseteq \nbd{Hx}{\mu},
\]
where the neighborhood is taken with respect to the metric $\rho$ on $X$.
%
%
\end{defn}

The following construction was introduced by Farb \cite{Farb98}
in the context of manifolds with pinched negative curvature,
and further elaborated by Bowditch \cite{BowditchRelHyp}.
Farb credits the ``electric'' terminology to Thurston.

\begin{defn}[Electric space]
Suppose $(G,\PP)$ has a cusp uniform action on a $\delta$--hyperbolic space
$(X,\rho)$, and let $X-U$ be a truncated space for the action.
The \emph{electric pseudometric} $\hat{\rho}$ on $X$ is the path pseudometric
obtained by modifying $\rho$ so that it is identically zero on each
horoball of $U$
(essentially collapsing each horoball of $U$ to a point).
The \emph{electric space} associated to the pair $(X,U)$ is the
pseudometric space $(X,\hat{\rho})$.

A \emph{relative path} $(\gamma,\alpha)$ in $(X,U)$
is a path $\gamma$ together with a subset $\alpha$ consisting of a
disjoint union of finitely many paths $\alpha_1\dots,\alpha_n$
occurring in this order along $\gamma$
such that each $\alpha_i$ lies in some closed horoball $B_i$ of $U$.
We also assume that $B_i \ne B_{i+1}$.
The \emph{relative length} of $(\gamma,\alpha)$ is the length
in $X$ of $\gamma - \alpha$.
We write $\gamma = \beta_0 \cup \alpha_1 \cup \beta_1 \cup \cdots
\cup \alpha_n \cup \beta_n$, where the $\beta_i$ are the complementary paths
of $\gamma$. (Either of $\beta_0$ or $\beta_n$ could be a trivial path.)
A relative path $(\gamma,\alpha)$ is \emph{efficient} if $B_i \ne B_j$
for $i\ne j$, and is \emph{semipolygonal} if each $\alpha_i$
is a geodesic of $X$.

There are natural notions of a relative path $(\gamma,\alpha)$
being an \emph{electric geodesic} or an
\emph{electric $\epsilon$--quasigeodesic}
given by comparing the relative length of an arbitrary subsegment
with the electric distance between its endpoints in the obvious way.
%
%
\end{defn}

The following result, roughly speaking, says that an electric quasigeodesic
is also a quasigeodesic in $(X,d)$.

\begin{lem}[Lemma~7.3, \cite{BowditchRelHyp}]
\label{lem:LiftIsQuasigeodesic}
Given constants $\delta,\epsilon>0$, there are constants
$r=r(\delta)$ and $\epsilon'=\epsilon'(\delta,\epsilon)$
so that the following holds.
Suppose $(G,\PP)$ has a cusp uniform action
on a $\delta$--hyperbolic space $(X,\rho)$.
Let $X-U$ be a truncated space for the action
such that any two horoballs of $U$ are separated in $X$
by a $\rho$--distance at least $r$, and let $\hat{\rho}$ be the corresponding
electric metric.
If $(\gamma,\alpha)$ is an efficient, semipolygonal relative path in $(X,U)$
and also an electric $\epsilon$--quasigeodesic
then $\gamma$ is an $\epsilon'$--quasigeodesic in $X$.
\end{lem}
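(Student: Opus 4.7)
The plan is to upgrade the electric $\epsilon$-quasigeodesic property of $(\gamma,\alpha)$ to a genuine $\rho$-quasigeodesic inequality for $\gamma$. Write $\gamma = \beta_0\alpha_1\beta_1\cdots\alpha_n\beta_n$ with each $\alpha_i$ a geodesic of $X$ inside a distinct horoball $B_i$, and set $z_i = \alpha_i^-$, $w_i = \alpha_i^+$, $w_0 := p := \gamma_-$, $z_{n+1} := q := \gamma_+$. Since any subsegment of $(\gamma,\alpha)$ is itself an efficient semipolygonal electric $\epsilon$-quasigeodesic, it will be enough to prove a bound of the form $\ell_\rho(\gamma') \leq \epsilon'\rho(p',q') + \epsilon'$ for every such $\gamma'$; so we may focus on $\gamma$ itself.

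First, the electric hypothesis directly bounds the ``exterior'' length: $\sum_i \ell(\beta_i) = \ell_{\mathrm{rel}}(\gamma,\alpha) \leq \epsilon\hat\rho(p,q) + \epsilon \leq \epsilon\rho(p,q) + \epsilon$. Replacing each $\beta_i$ by an $X$-geodesic from $w_{i-1}$ to $z_i$ yields a straightened path $\gamma''$ with $\ell_\rho(\gamma) - \ell_\rho(\gamma'') \leq \sum_i \ell(\beta_i)$, which is already under control. So it suffices to show $\gamma''$ is a $\rho$-quasigeodesic with constants depending only on $\delta$ and $r$.

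The path $\gamma''$ is a concatenation of $X$-geodesics with corners at the points $z_i$ and $w_i$. I would invoke the standard local-to-global stability of quasigeodesics in $\delta$-hyperbolic spaces (e.g.\ Bridson--Haefliger \cite{BH99}), for which the crucial input is a uniform bound on the Gromov products $(w_{i-1},w_i)_{z_i}$ and $(z_i,z_{i+1})_{w_i}$ at the corners, together with a uniform lower bound on the lengths of geodesic segments between consecutive corners. The latter is immediate on the exterior side because $[w_{i-1},z_i]$ has $\rho$-length at least $r$ by the separation hypothesis. For the corner estimate at $z_i$: the geodesic $\alpha_i = [z_i,w_i]$ plunges into $B_i$, and by Lemma~\ref{lem:GeodesicInHoroball} leaves the $M_2$-collar of $\partial B_i$ almost immediately after $z_i$; meanwhile the geodesic $[z_i,w_{i-1}]$ exits $B_i$ at $z_i$ and must reach $w_{i-1}\in\partial B_{i-1}$, which sits at $\rho$-distance at least $r$ from $B_i$. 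Any accidental re-entry of $[z_i,w_{i-1}]$ into $B_i$ or into a third horoball is controlled by Lemma~\ref{lem:NearHoroballComplement}. Taking $r=r(\delta)$ strictly larger than all of $M_0$, $M_1$, $M_2$ and $\delta$ forces the two geodesic rays issuing from $z_i$ to diverge within $O(\delta)$, bounding the Gromov product by a constant depending only on $\delta$. A symmetric argument applies at each $w_i$.

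The main obstacle is precisely this corner estimate: the separation $r(\delta)$ must be calibrated so that the horoball constants from Section~\ref{sec:Horoballs} combine cleanly to give a Gromov product bound that is uniform in the depth to which $\alpha_i$ descends into $B_i$ and in the locations of the other horoballs. A subsidiary technicality is the case of very short $\alpha_i$, where $z_i$ and $w_i$ nearly coincide; these can be absorbed into the adjacent exterior segments at the cost of enlarging the effective electric constant, or else observed to contribute negligibly both to length and to backtracking. Once the corners are controlled, local-to-global stability promotes $\gamma''$ to a $(\lambda,c)$-quasigeodesic with $\lambda,c$ depending only on $\delta,\epsilon,r$; combined with the electric bound on $\ell_\rho(\gamma)-\ell_\rho(\gamma'')$, this yields the required constant $\epsilon' = \epsilon'(\delta,\epsilon)$ for $\gamma$ itself.
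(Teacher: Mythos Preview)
The paper does not prove this lemma: it is stated with a citation to Bowditch \cite[Lemma~7.3]{BowditchRelHyp} and no argument is given. So there is nothing in the paper to compare your proposal against.

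As a standalone sketch, your strategy---straighten to a piecewise geodesic, bound the Gromov products at the corners using the horoball separation, then apply local-to-global stability---is the standard one and is essentially what Bowditch does. Two points deserve care. First, the constants $M_0$, $M_1$, $M_2$ from Lemmas~\ref{lem:NearHoroballComplement}--\ref{lem:GeodesicInHoroball} are stated in the paper as depending on the particular horoball $B$ (and horofunction $h$), not on $\delta$ alone; to get $r=r(\delta)$ you need the uniform versions, which follow from the proofs once one fixes Busemann-type horofunctions with $D_0=D_0(\delta)$, but this should be said explicitly. Second, your invocation of Lemma~\ref{lem:GeodesicInHoroball} at the corner $z_i$ is slightly misaligned with its hypotheses (which require endpoints in $X\setminus B$ with $h=0$), and your control of the geodesic $[z_i,w_{i-1}]$ near $B_i$ via Lemma~\ref{lem:NearHoroballComplement} bounds only the depth of re-entry, not the length of fellow-travel; you still need to convert that into a Gromov-product bound. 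These are genuine details to fill in, but the architecture of your argument is correct.
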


Roughly speaking the following definition says that a subgroup is
relatively quasiconvex if its orbits are quasiconvex with respect to electric
geodesics.

\begin{defn}[QC-4]
A subgroup $H \le G$ is \emph{relatively quasiconvex}
provided that the following holds.
Let $(G,\PP)$ have a cusp uniform action on
some (any) $\delta$--hyperbolic space $(X,d)$,
and choose some (any) truncated space $X-U$ for the action.
Suppose each pair of horoballs of $U$ is separated by at least a distance
$r$, where $r=r(\delta)$
is the constant given by Lemma~\ref{lem:LiftIsQuasigeodesic}.

For some (each) basepoint $x \in X-U$ 
there is a constant $\kappa >0$ so that for every
efficient, semipolygonal relative path $(\gamma,\alpha)$ that is also an
electric geodesic connecting points of $Hx$,
the subset $\gamma - \alpha$
lies in the $\kappa$--neighborhood of $Hx$ in $(X,\rho)$.
\end{defn}

Observe that a similar condition will hold for electric quasigeodesics
connecting points of the orbit $Hx$, since Lemma~\ref{lem:LiftIsQuasigeodesic}
implies that electric quasigeodesics with common endpoints
track Hausdorff close in $X$.

The next definition is quite close to the one introduced by Osin for subgroups
of a finitely generated relatively hyperbolic group.  The only difference between Osin's definition and the one below is that
we use a proper, left invariant metric $d$ on $G$ where Osin used
the word metric for a finite generating set.
It is not clear whether one can use the word metric for a
finite relative generating set in place of the proper, left invariant metric.

\begin{defn}[QC-5]
A subgroup $H \le G$ is \emph{relatively quasiconvex} if the following holds.
Let $\Set{S}$ be some (any) finite relative generating set for $(G,\PP)$,
and let $\Set{P}$ be the union of all $P_i \in \PP$.
Consider the Cayley graph $\bar\Gamma=\Cayley(G,\Set{S} \cup \Set{P})$
with all edges of length one.
Let $d$ be some (any) proper, left invariant metric on~$G$.
Then there is a constant $\kappa=\kappa(\Set{S},d)$ such that
for each geodesic $\bar{c}$ in $\bar\Gamma$
connecting two points of $H$,
every vertex of $\bar{c}$ lies within a $d$--distance $\kappa$ of $H$.
\end{defn}

We will see that the preceding definition is independent of the choice of
finite relative generating set $\Set{S}$ and the choice of proper metric $d$.
In particular, if $\Set{S}$ is a finite generating set for $G$
in the traditional sense, then we can choose $d$ to be the word metric
for $\Set{S}$.
Thus in the finitely generated case,
(QC-5) is equivalent to the definition given by Osin in \cite{Osin06}.

%
%

\section{Relative quasiconvexity: Equivalence of definitions}
\label{sec:QCEquivalence}

We now establish the equivalence of the various definitions
of relative quasiconvexity
introduced in the previous section.  Throughout this section
$(G,\PP)$ is a relatively hyperbolic group and $H$ is any subgroup of $G$.

\begin{prop}
\label{prop:QC12}
Definitions \textup{(QC-1)} and \textup{(QC-2)} are each well-defined
and are equivalent.
\end{prop}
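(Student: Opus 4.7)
The plan is to use Theorem~\ref{thm:Yaman} and Theorem~\ref{thm:GeomFiniteCuspUniform} as bridges between the dynamical formulation (QC-1) and the geometric formulation (QC-2). All independence-of-choice statements then reduce to the uniqueness (up to equivariant homeomorphism) of the Bowditch boundary of a relatively hyperbolic group, combined with Remark~\ref{rem:join}, which lets me identify $\join(\Lambda H)$ (up to quasi-isometry) with any geodesic hyperbolic space whose boundary is $\Lambda H$.

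First I would dispose of the elementary cases. If $\Lambda H$ is empty, $H$ is finite; if $\Lambda H$ is a single point, $H$ is parabolic, fixing a bounded parabolic point, and the induced action on a singleton is trivially geometrically finite; and if $|\Lambda H|=2$, then $H$ is virtually cyclic and acts cocompactly on a quasi-geodesic line, which is quasi-isometric to $\join(\Lambda H)$. In each case (QC-1) and (QC-2) hold simultaneously, using the explicit exceptional clauses in (QC-2). So I may assume $H$ is nonelementary from here on.

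For (QC-1)~$\Longrightarrow$~(QC-2), I apply Theorem~\ref{thm:Yaman} to the nonelementary geometrically finite convergence action of $H$ on $\Lambda H$, obtaining a proper $\delta'$--hyperbolic space $Y$ on which $H$ acts properly with $\boundary Y$ $H$--equivariantly homeomorphic to $\Lambda H$. By Theorem~\ref{thm:GeomFiniteCuspUniform} this action is cusp uniform, and by Remark~\ref{rem:join} the space $Y$ is quasi-isometric to $\join(\Lambda H)\subseteq X$, producing the required model in (QC-2). For the reverse direction, a cusp uniform $H$--action on $Y$ quasi-isometric to $\join(\Lambda H)$ yields, via Theorem~\ref{thm:GeomFiniteCuspUniform}, a geometrically finite convergence action of $H$ on $\boundary Y$; the canonical homeomorphism $\boundary Y \to \Lambda H$ from Remark~\ref{rem:join} transports geometric finiteness back to the limit set, giving (QC-1).

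Well-definedness in each case reduces to the same principle. For (QC-1), given two compacta $M$, $M'$, the equivalence (RH-1)~$\Longleftrightarrow$~(RH-2) realizes each of them as the boundary of a cusp uniform action on a proper hyperbolic space, and any two such boundaries are $G$--equivariantly homeomorphic by the standard uniqueness of the Bowditch boundary (which I would cite from \cite{BowditchRelHyp}); under this homeomorphism the limit sets $\Lambda_M H$ and $\Lambda_{M'} H$ correspond, so geometric finiteness transfers between the two actions. For (QC-2), two cusp uniform hyperbolic models $X$, $X'$ for $(G,\PP)$ yield $H$--equivariantly quasi-isometric joins $\join(\Lambda H)$, since both are $H$--equivariantly quasi-isometric to a canonical hyperbolic model $Y$ produced by the argument above; hence existence of a suitable $Y$ in (QC-2) is independent of the choice. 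The main obstacle is verifying that the quasi-isometry between $Y$ and $\join(\Lambda H)$ can be chosen $H$--equivariant and that it induces the expected homeomorphism on boundaries; this rests on the equivariance built into Yaman's construction together with the intrinsic nature of $\Lambda H$ inside the Bowditch boundary.
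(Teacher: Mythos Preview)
Your detour through Yaman's theorem introduces a genuine gap. For (QC-1)~$\Longrightarrow$~(QC-2) you apply Theorem~\ref{thm:Yaman} to obtain a proper hyperbolic space $Y$ with $\boundary Y$ equivariantly homeomorphic to $\Lambda H$, and then assert that ``by Remark~\ref{rem:join} the space $Y$ is quasi-isometric to $\join(\Lambda H)\subseteq X$.'' But Remark~\ref{rem:join} does not say this. It says only that $\join(\Lambda H)$ is quasi-isometric to \emph{some} geodesic hyperbolic space whose boundary is $\Lambda H$; it does not say that \emph{every} hyperbolic space with boundary $\Lambda H$ is quasi-isometric to the join. Two proper hyperbolic spaces with homeomorphic boundaries need not be quasi-isometric in general, and you have not invoked any additional structure (such as a uniqueness-of-cusp-uniform-models statement) that would close this gap. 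The ``main obstacle'' you flag at the end concerns equivariance, but the prior existence of the quasi-isometry is itself unjustified.

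The paper's argument avoids Yaman entirely for this implication: it takes the specific hyperbolic space $Y$ furnished by Remark~\ref{rem:join} (which is quasi-isometric to $\join(\Lambda H)$ by construction and has $\boundary Y \cong \Lambda H$), and then applies Theorem~\ref{thm:GeomFiniteCuspUniform} directly to $Y$. Since the induced action of $H$ on $\boundary Y = \Lambda H$ is geometrically finite by hypothesis, the action of $H$ on $Y$ is cusp uniform, which is exactly (QC-2). This is shorter and sidesteps the comparison problem. Your argument can be repaired by dropping the appeal to Yaman and proceeding in this way; the well-definedness portion of your proposal, via Theorem~\ref{thm:BoundaryWellDefined}, is essentially what the paper does.
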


The proof uses the following result due to Bowditch.

\begin{thm}[Theorem~9.4, \cite{BowditchRelHyp}]
\label{thm:BoundaryWellDefined}
Suppose $(G,\PP)$ has cusp uniform actions on spaces $X$ and $X'$.
Then $\boundary X$ and $\boundary X'$ are $G$--equivariantly homeomorphic.
\end{thm}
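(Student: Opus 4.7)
The plan is to construct a $G$-equivariant homeomorphism $\phi \colon \boundary X \to \boundary X'$ by specifying it separately on parabolic fixed points and on conical limit points, and then verifying continuity. By Theorem~\ref{thm:GeomFiniteCuspUniform}, both cusp uniform actions induce geometrically finite convergence group actions of $G$ on $\boundary X$ and $\boundary X'$, so in each boundary every point is either a conical limit point or a bounded parabolic point, and in both actions the conjugacy classes of maximal parabolic subgroups are represented by the same collection $\PP$.

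The first step is to define $\phi$ on parabolic points. Each parabolic fixed point $p \in \boundary X$ is the unique fixed point of its stabilizer $Q = \Stab_G(p)$, where $Q$ is conjugate to some $P_i \in \PP$; and likewise in $\boundary X'$. Thus the assignment $p \mapsto p'$, where $p'$ is the unique parabolic fixed point in $\boundary X'$ of the same subgroup $Q$, is a well-defined $G$-equivariant bijection between the sets of parabolic fixed points. Explicitly, in terms of cosets, we send $p_{gP_i} \in \boundary X$ to $p'_{gP_i} \in \boundary X'$ for each $g \in G$ and each $i$.

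Next I extend $\phi$ to conical limit points. Fix basepoints $x_0 \in X - U$ and $x'_0 \in X' - U'$ in truncated spaces for the two actions. Given a conical limit point $\xi \in \boundary X$, choose a geodesic ray $c$ in $X$ based at $x_0$ with $c(\infty) = \xi$. Using cocompactness of $G$ on $X - U$, together with the bounded excursion estimates for geodesics near horoballs from Lemmas~\ref{lem:NearHoroballComplement} and~\ref{lem:GeodesicInHoroball}, one extracts a sequence $(g_n)$ in $G$ such that $g_n \cdot x_0$ lies within uniformly bounded distance of $c(n)$. Define $\phi(\xi) \in \boundary X'$ as a subsequential limit of $g_n \cdot x'_0$; conical convergence of $g_n \cdot x_0$ to $\xi$ in $X$ translates, via the convergence group action on $\boundary X'$, into a unique accumulation point in $\boundary X'$, which turns out to be a conical limit point for the action on $X'$.

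The hard part will be verifying well-definedness and continuity. For well-definedness, if two sequences $(g_n)$ and $(h_n)$ both witness conical convergence to $\xi$, the element $g_n^{-1} h_n$ stays in a bounded subset of $G$ (in any proper left-invariant metric); translating this to $X'$ via the properness of the $G$-action shows that $g_n \cdot x'_0$ and $h_n \cdot x'_0$ remain a bounded $X'$-distance apart, so they converge to the same boundary point. For continuity, if $\xi_k \to \xi$ in $\boundary X$, a diagonal argument produces sequences in $G$ realizing conical convergence uniformly in $k$; applying the same sequences in $X'$ and using the convergence property of the action on $\boundary X'$ yields $\phi(\xi_k) \to \phi(\xi)$. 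By symmetry of the construction, the analogous map $\psi \colon \boundary X' \to \boundary X$ exists and is inverse to $\phi$, so $\phi$ is a continuous bijection between compact Hausdorff spaces, hence a homeomorphism. Equivariance is immediate from the construction, since both the parabolic matching and the sequence characterization are canonical with respect to the $G$-action.
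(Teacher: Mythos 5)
The paper does not prove this statement; it is quoted verbatim from Bowditch \cite{BowditchRelHyp}, where the proof occupies a substantial part of Section~9 and proceeds by identifying \emph{both} boundaries with a canonical model attached to the group (equivalently, by producing an equivariant coarse comparison between $X$ and $X'$ that is compatible with the horoball structures). Your outline has the right shape --- match parabolic points via their stabilizers, send conical limit points along orbit sequences, check continuity --- but the two steps that carry all the content are not actually proved, and the one concrete argument you give for well-definedness is false. If $(g_n)$ and $(h_n)$ both witness conical convergence to the same $\xi\in\boundary X$, the elements $g_n^{-1}h_n$ need \emph{not} stay in a bounded subset of $G$: already for $G=F(a,b)$ acting on its Cayley graph and $\xi=a^{+\infty}$, the sequences $g_n=a^{-n}$ and $h_n=a^{-n^2}$ both work, while $g_n^{-1}h_n=a^{n-n^2}$ is unbounded. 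Since your definition of $\phi(\xi)$ is ``a subsequential limit of $g_n\cdot x_0'$,'' you would first need every subsequential limit to agree for a fixed $(g_n)$, and then agreement across all admissible $(g_n)$; neither is addressed by a correct argument.

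The underlying gap is that you never establish any geometric relationship between $X$ and $X'$. The orbit maps $g\mapsto g(x_0)$ and $g\mapsto g(x_0')$ are cobounded only into the truncated spaces $X-U$ and $X'-U'$; the ambient metrics of $X$ and $X'$ restrict to these orbits in ways that can differ wildly, because the horoballs (which are where geodesics to the boundary spend most of their time) are not assumed to be comparable in any way. So there is no a priori reason why $g_n(x_0)\to\xi$ in $X\cup\boundary X$ should force $g_n(x_0')$ to converge in $X'\cup\boundary X'$, let alone to a point depending only on $\xi$ --- that implication is essentially the theorem itself, and invoking ``the convergence group action on $\boundary X'$'' does not supply it, since the convergence property constrains the dynamics on $\boundary X'$ in isolation and says nothing about its relation to the dynamics on $\boundary X$. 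The same objection applies to your continuity argument (which is only sketched, and which must in particular handle sequences of conical limit points converging to a parabolic point, where the topology is governed by the cocompact action of the stabilizer on the complement of the fixed point). To repair the proof you would need the missing comparison: for instance, a $G$-equivariant quasi-isometry $X\to X'$ built by first matching the truncated spaces (where cocompactness makes the orbit metrics comparable) and then extending over corresponding horoballs with controlled distortion, or Bowditch's identification of each boundary with a model depending only on $(G,\PP)$.
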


\begin{proof}[Proof of Proposition~\ref{prop:QC12}]
By Corollary~\ref{cor:RH12}, if $(G,\PP)$ has a geometrically finite
convergence action on a compact, metrizable space $M$,
then $(G,\PP)$ acts geometrically finitely
on a proper $\delta$--hyperbolic space $X$
and $M$ is $G$--equivariantly homeomorphic to the boundary of $X$.
It follows from Theorem~\ref{thm:BoundaryWellDefined} that condition (QC-1)
does not depend on the choice of compactum $M$
or the choice of geometrically finite action of $(G,\PP)$ on $M$.

Suppose $(G,\PP)$ has a cusp uniform action on a proper $\delta$--hyperbolic
space $X$.  Then the induced action on $M=\boundary X$ is
a geometrically finite convergence action.
We will show that (QC-1) holds for the action of $G$ on $M$
if and only if (QC-2) holds for the action of $G$ on $X$.
By the preceding paragraph, it follows immediately that
(QC-2) is independent of the choice of $X$.

Recall our convention that the unique action of a finite group $H$
on the empty set is a convergence group action.
Therefore each finite subgroup $H\le G$ satisfies both (QC-1) and
(QC-2).
Similarly, the unique action of a countable group $H$ on a single point
is a convergence group action.
If $H$ is parabolic then $\Lambda H$ is a single point,
and hence $H$ satisfies both (QC-1) and (QC-2).

Now suppose $H$ is neither finite nor parabolic.
Then $\Lambda H \subseteq \boundary X$ contains at least two points,
and $\join(\Lambda H)$ is a nonempty quasiconvex subspace of $X$
invariant under the action of $H$.
By Remark~\ref{rem:join}, the subspace $\join(\Lambda H)$ with the subspace 
metric is quasi-isometric to a $\delta'$--hyperbolic geodesic space $Y$
for some $\delta'\ge 0$,
and $\boundary Y$ is $H$--equivariantly homeomorphic
to $\Lambda H \subseteq \boundary X$.

Applying Theorem~\ref{thm:GeomFiniteCuspUniform} to $Y$,
we see that (QC-1) holds for $\boundary Y = \Lambda H$
if and only if (QC-2) holds for $Y$.
\end{proof}

\begin{lem}\label{lem:TwoGeodesicsTruncated}
Let $(G,\PP)$ have a cusp uniform action on
a $\delta$--hyperbolic space $(X,\rho)$
with associated truncated space $X - U$.
For each $N$ there is a constant $M_1$ so that the following holds.
Choose two $\rho$--geodesics $c,c'$ such that
\[
   \max \{ \rho(c_{-},c'_{-}), \rho(c_{+},c'_{+}) \} < N.
\]
If $c \cap (X-U)$ and $c' \cap (X-U)$ are both nonempty then
the Hausdorff $\rho$--distance between $c \cap (X-U)$ and $c' \cap (X-U)$
is less than $M_1$.
\end{lem}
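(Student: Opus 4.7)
The plan is to combine a standard $\delta$-hyperbolic tracking argument with horofunction control on horoball excursions: if a point $z'\in c'$ tracks $z\in c\cap(X-U)$ but happens to lie in a horoball $B$, we slide $z'$ a bounded distance along $c'$ to an exit point on $\partial B$, producing a nearby point of $c'\cap(X-U)$.

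First I would apply $\delta$-thinness to the geodesic quadrilateral whose sides are $c$, $c'$, and two short paths of $\rho$-length $<N$ joining corresponding endpoints; this yields a constant $K_1 = K_1(\delta,N)$ such that $c$ and $c'$ are Hausdorff $\rho$-close at distance $K_1$. By symmetry it suffices to produce, for each $z\in c\cap(X-U)$, a point of $c'\cap(X-U)$ within $\rho$-distance $M_1$ of $z$. Choose $z'\in c'$ with $\rho(z,z')\le K_1$. If $z'\in X-U$ we are done; otherwise $z'$ lies in some open horoball $B\subseteq U$ centered at $\xi \in \boundary X$ with horofunction $h$, and we let $\sigma$ be the maximal sub-arc of $c'$ containing $z'$ with $\sigma\subseteq\bar B$. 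Because horofunctions are approximately $1$-Lipschitz in $\rho$, the inequality $\rho(z,z')\le K_1$ together with $z\in X-B$ forces a uniform lower bound on $h(z')$; equivalently $z'$ has bounded \emph{depth} in $B$.

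In the main case $\sigma$ has both endpoints $a,b$ on $\partial B$. Applying $\delta$-thinness to the ideal triangle $\Delta(a,b,\xi)$ together with the linear decay of $h$ along the rays $[a,\xi)$ and $[b,\xi)$ (a direct consequence of the horofunction definition in Section~\ref{sec:Horoballs}), the bounded depth of $z'$ translates into an estimate $\rho(z',\{a,b\}) \le K_2$ for some $K_2 = K_2(\delta,N)$. Pairwise disjointness of the open horoballs of $U$ guarantees $\partial B\subseteq X-U$, so the nearer of $a,b$ yields a point of $c'\cap(X-U)$ within $\rho$-distance $K_1+K_2$ of $z$. The hypothesis $c'\cap(X-U) \ne \emptyset$ rules out the degenerate case $\sigma = c'$; if $\sigma$ shares an endpoint with $c'$ in the interior of $\bar B$, the same estimate applies at the other endpoint of $\sigma$, which must lie on $\partial B$. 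Setting $M_1 := K_1+K_2$ finishes the argument.

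The principal technical obstacle is uniformity of constants: the estimates above depend a priori on the horofunction constants $D_0,D_1$ associated to the individual horoball $B$, but $G$-cocompactness of $X-U$ combined with finiteness of $\PP$ leaves only finitely many $G$-orbits of horoballs in $U$, so one may pass to a single worst-case constant and conclude with a bound $M_1$ depending only on $\delta$, $N$, and the action.
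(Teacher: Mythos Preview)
Your argument is correct and follows essentially the same approach as the paper: quadrilateral thinness gives Hausdorff closeness $N' = N + 2\delta$ between $c$ and $c'$, and then a horofunction estimate bounds the distance from the tracked point to the nearest exit of the horoball along $c'$. The only difference is packaging---the paper invokes Lemma~\ref{lem:NearHoroballComplement} directly for this second step rather than re-deriving it via the ideal triangle $\Delta(a,b,\xi)$ as you do, and it leaves the uniformity-over-$G$-orbits argument implicit where you spell it out.
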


\begin{proof}
The geodesics $c$ and $c'$ are opposite sides of a $2\delta$--thin
geodesic quadrilateral.
It follows that the Hausdorff distance in $X$ between $c$ and $c'$ is at most
$N' := N + 2\delta$.
Choose a point $p \in c \cap (X-U)$.
Then $p$ is within a distance $N'$ of a point $q \in c'$.
Observe that $q$ also lies in $\nbd{X-U}{N'}$.
By Lemma~\ref{lem:NearHoroballComplement} there is a segment of $c'$ from 
$q$ to $X-U$ of length at most $M_0$, where $M_0$ depends only on
$X-U$ and $N'$.
Therefore $p$ lies within a distance $N' + M$ of a point $r \in c'\cap(X-U)$.
Interchanging the roles of $c$ and $c'$ completes the proof.
\end{proof}

\begin{lem}\label{lem:TwoTruncatedSpaces}
Let $(G,\PP)$ have a cusp uniform action
on a $\delta$--hyperbolic space $(X,\rho)$.
Let $X-U$ and $X-U'$ be two corresponding truncated spaces.
Then there is a constant $M_2$ such that for each $\rho$--geodesic $c$ in $X$,
if $c \cap (X-U)$ and $c \cap (X-U')$ are both nonempty then
the Hausdorff $\rho$--distance between $c \cap (X-U)$ and $c \cap (X-U')$
is at most $M_2$.
\end{lem}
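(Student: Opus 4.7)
The plan is to follow the strategy used in the proof of Lemma~\ref{lem:TwoGeodesicsTruncated}: first establish that $X-U$ and $X-U'$ themselves lie at uniformly bounded Hausdorff $\rho$-distance in $X$, then apply Lemma~\ref{lem:NearHoroballComplement} to transfer that bound to intersections with the specific geodesic $c$.

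The key initial observation is that both $U$ and $U'$ consist of $G$-equivariant disjoint horoballs centered at the parabolic points of $G$---a set determined by $G$ alone---so there is a natural bijection pairing each horoball $B \in U$ with the unique $B' \in U'$ sharing its parabolic center, and only finitely many $G$-orbits of pairs $(B,B')$ occur. Lemma~\ref{lem:BetweenHoroballs} then furnishes a uniform constant $M^*$ bounding the length of any geodesic contained in $B \setminus B'$ or $B' \setminus B$ as $(B,B')$ varies over orbits. For any $p \in X-U$, either $p \in X-U'$, or else $p$ lies in some $B' \in U'$ with $p \in B' \setminus B$ (since $p \notin U \supseteq B$); in the latter case a geodesic from $p$ heading outward from the parabolic center of $B'$ stays in $B' \setminus B$ until reaching $\boundary B' \subseteq X-U'$, so by Lemma~\ref{lem:BetweenHoroballs} this distance is at most $M^*$. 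Swapping the roles of $U$ and $U'$ yields a global Hausdorff bound $L := M^*$ between $X-U$ and $X-U'$ in $X$.

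Now I take $p = c(t_0) \in c \cap (X-U)$; by symmetry it suffices to find a point of $c \cap (X-U')$ within $\rho$-distance some $M_2$ of $p$. If $p \in X-U'$ I am done, so assume $p \in B'$ for some $B' \in U'$. The global bound yields $\rho(p, X-B') \le L$, so $c(t_0) \in \nbd{X-B'}{L} \cap B'$. I then apply Lemma~\ref{lem:NearHoroballComplement} to the maximal connected subsegment of $c$ containing $t_0$ and lying in $\nbd{X-B'}{L} \cap B'$; its $\rho$-length is bounded by $M_0(B', L)$, hence by a uniform constant $M_0^*$ taken over the finitely many $G$-orbits. Exactly as in the proof of Lemma~\ref{lem:TwoGeodesicsTruncated}, I will argue that at least one endpoint of this subsegment lies on $\boundary B' \subseteq X-U'$, so this endpoint is a point of $c \cap (X-U')$ within $\rho$-distance $M_0^*$ of $p$, and $M_2 := M_0^*$ works.

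The main obstacle is this last assertion: the maximal shallow subsegment of $c$ through $t_0$ must actually reach $\boundary B'$, equivalently, the shallow set $\nbd{X-B'}{L}$ meets the component of $c^{-1}(B')$ containing $t_0$ in an arc connected to the boundary. This is the same subtlety implicitly handled in the proof of Lemma~\ref{lem:TwoGeodesicsTruncated}: in $\CAT(-1)$ spaces it is immediate from the convexity of horoballs, while in general $\delta$-hyperbolic spaces it follows from the approximate concavity of horofunctions along geodesics, a standard consequence of the thin triangles condition.
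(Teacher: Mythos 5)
Your strategy is genuinely different from the paper's, which never compares $X-U$ and $X-U'$ globally: the paper shrinks both families equivariantly to a common system $V\subseteq U\cap U'$, reduces to the nested case one orbit of horoballs at a time, and then bounds directly, via Lemma~\ref{lem:BetweenHoroballs}, the maximal subsegments of $c$ itself lying in the shells between corresponding horoballs; Lemma~\ref{lem:NearHoroballComplement} never enters. Your first step is already a gap: in a general proper geodesic $\delta$--hyperbolic space there is no canonical ``geodesic from $p$ heading outward from the parabolic center'' (geodesics need not extend past $p$), and a geodesic from $p$ to a chosen point outside $B'$ need not remain in $B'\setminus B$, since membership in the coarsely defined horoball $B$ is not governed by a single horofunction inequality; so Lemma~\ref{lem:BetweenHoroballs} cannot be applied to it. The bounded Hausdorff distance between the two complements is believable, but what you wrote is a gesture, not a proof.

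The step you flag as the ``main obstacle'' is the more serious problem, and it is not a deferred standard fact: approximate concavity of $h_{B'}$ along $c$ only shows that on at least one side of $t_0$ the geodesic stays shallow, and on that side $c$ may simply terminate at an endpoint that is still inside $B'$, so your shallow subsegment never reaches $\boundary B'$ and no point of $c\cap(X-U')$ need be anywhere near $c(t_0)$. Indeed the unrestricted statement you are arguing for fails in that configuration: in the upper half-plane with a cusp at $\infty$, with $U$--horoball $\{y\ge 2\}$ and $U'$--horoball $\{y\ge 1\}$ there (and the corresponding equivariant families elsewhere), a geodesic from $(0,3/2)$ to a point at height $9/10$, horizontal distance $d$ away and outside the horoballs of $U'$, meets both truncated spaces, yet every point of $c\cap(X-U')$ lies within bounded distance of the far endpoint, roughly $2\log d$ away from the point $(0,3/2)\in c\cap(X-U)$. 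So no thin-triangles refinement can close the gap as you have structured the proof: one must use where the endpoints of $c$ lie. In the paper's applications the endpoints of $c$ are orbit points of a basepoint lying in both truncated spaces, and the paper's nested-shell argument works with subsegments of $c$ whose ends are either such endpoints or crossing points into the smaller truncated space, so this information enters automatically; your argument never invokes the endpoints of $c$ at all.
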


\begin{proof}
Recall that the horoballs of $U$ and $U'$ lie in finitely many $G$--orbits,
corresponding to the finitely many peripheral subgroups of $\PP$.
By shrinking the horoballs of $U$ and $U'$ equivariantly,
we can obtain a third truncated space $X-V$ such that $V \subseteq U \cap U'$.
If $c$ has nonempty intersection with each of the given truncated spaces
then it also has nonempty intersection with $X-V$.
Thus it suffices to prove the lemma in the special case when $U' \subseteq U$.
Moreover, it is enough to consider the further special case 
that $U$ and $U'$ differ on only one orbit of horoballs,
since this case can be iterated finitely many times to obtain the desired
result.

Suppose $U$ and $U'$ agree on all but one orbit, and this exceptional orbit
is represented by horoballs $B$ and $B'$ of $U$ and $U'$ respectively,
such that $B' \subset B$.
Then $X-U \subset X-U'$.
Clearly we have
\[
   c \cap (X-U) \subseteq c \cap (X-U').
\]
On the other hand, any subsegment $\bar{c}$ of $c$
that lies in $X-U'$ but not in $X-U$
is a translate of a geodesic in $B-B'$.
By Lemma~\ref{lem:BetweenHoroballs}, such a segment has length at most
$M_1=M_1(B,B')$.
Thus
\[
   c \cap (X-U') \subseteq \bignbd{ c \cap (X-U) }{M_0},
\]
completing the proof.
\end{proof}

\begin{prop}\label{prop:QC3WellDefined}
Suppose $(G,\PP)$ has a cusp uniform action on $(X,\rho)$, and $H \le G$.
If $H$ satisfies \textup{(QC-3)}
with respect to one choice of truncated space $X-U$
and basepoint $x \in X-U$, then $H$ satisfies \textup{(QC-3)}
with respect to any other
truncated space $X-U'$ and basepoint $x'$.
\end{prop}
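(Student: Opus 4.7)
The plan is to decouple the two changes---of truncated space and of basepoint---handle each separately, and then chain them together through a common refinement.

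First I would fix the truncated space $X-U$ and show that (QC-3) does not depend on the choice of basepoint $x \in X-U$. For any $x' \in X-U$ and any geodesic $c'$ joining $h_1 x', h_2 x' \in Hx'$, I would compare with the geodesic $c$ between $h_1 x$ and $h_2 x$; these endpoints are $\rho(x,x')$-close by isometricity of the $G$-action. Since $c$ has endpoints in $X-U$, the set $c \cap (X-U)$ is nonempty, so Lemma~\ref{lem:TwoGeodesicsTruncated} provides a uniform Hausdorff bound between $c \cap (X-U)$ and $c' \cap (X-U)$, in the nontrivial case when the latter is nonempty. Combining this bound with the hypothesis for $Hx$ and with the $\rho(x,x')$-closeness of each $hx$ to the corresponding $hx'$ yields an enlarged constant $\mu'$ for $x'$.

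Next I would handle changes of the truncated space in two directions. For a $G$-equivariant family $V \subseteq U$ of horoballs giving a larger truncated space $X-V \supseteq X-U$, and a basepoint $y \in X-U$ (automatically in $X-V$), I would decompose
\[
   c \cap (X-V) = \bigl(c \cap (X-U)\bigr) \cup \bigl(c \cap (U-V)\bigr)
\]
for any geodesic $c$ with endpoints in $Hy$. The first piece is controlled by hypothesis. Each subsegment of $c$ in $U-V$ lies in a collar $B-B'$ between corresponding horoballs of $U$ and $V$; Lemma~\ref{lem:BetweenHoroballs} bounds its length, and equivariance reduces the possibilities to finitely many orbits of collars, giving a uniform bound. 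Hence every point of $c \cap (U-V)$ lies within a bounded distance of a point on $\partial B \subseteq X-U$, and thence within a bounded distance of $Hy$. The reverse direction, enlarging from $X-V$ back to $X-U'$ with $V \subseteq U'$ and basepoint $z' \in X-U'$, is immediate because $X-U' \subseteq X-V$ forces $c \cap (X-U') \subseteq c \cap (X-V)$, with no change of constant.

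To conclude, given two truncated spaces $X-U$ and $X-U'$, I would form a common refinement $X-V$ with $V \subseteq U \cap U'$ by equivariantly shrinking horoballs as in the proof of Lemma~\ref{lem:TwoTruncatedSpaces}, and then pick basepoints $z \in X-U$ and $z' \in X-U'$, both of which also lie in $X-V$. The chain
\[
   (X-U, x) \;\to\; (X-U, z) \;\to\; (X-V, z) \;\to\; (X-V, z') \;\to\; (X-U', z')
\]
applies, in order, the basepoint step, the shrinking step, the basepoint step again with truncated space $X-V$, and the enlarging step, each inflating the quasiconvexity constant by a bounded amount. The main obstacle I foresee is the bookkeeping around which basepoint lies in which truncated space at each stage, since (QC-3) insists that $x \in X-U$. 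Routing through the common refinement $X-V$ sidesteps this difficulty, because $X-V$ is large enough to contain basepoints drawn from either $X-U$ or $X-U'$.
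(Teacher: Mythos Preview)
Your proposal is correct and follows essentially the same approach as the paper: handle basepoint changes via Lemma~\ref{lem:TwoGeodesicsTruncated}, handle truncated-space changes by passing through a common refinement $X-V$ with $V\subseteq U\cap U'$, and chain the steps together. The only cosmetic differences are that the paper cites Lemma~\ref{lem:TwoTruncatedSpaces} directly rather than reproving its content via Lemma~\ref{lem:BetweenHoroballs} as you do, and that your initial basepoint step $(X-U,x)\to(X-U,z)$ is redundant since $x$ already lies in $X-U$.
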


\begin{proof}
Let us first consider the effect of changing the basepoint $x \in X-U$ to another point $x' \in X-U$ in the same truncated space.
Choose $h_0,h_1 \in H$, a $\rho$--geodesic $c$ from $h_0(x)$ to $h_1(x)$,
and a $\rho$--geodesic $c'$ from $h_0(x')$ to $h_1(x')$.
Then
\[
   \rho(c_{-},c'_{-}) = \rho(c_{+},c'_{+}) = \rho(x,x').
\]
By Lemma~\ref{lem:TwoGeodesicsTruncated}, the Hausdorff distance between
$c \cap (X-U)$ and $c' \cap (X-U)$ is at most $M_1$
for some constant $M_1$ that depends on $x$ and $x'$ but not on the choice
of $h_0$ and $h_1$.
If $c \cap (X-U)$ lies in $\nbd{Hx}{\kappa}$
then $c' \cap (X-U)$ lies in $\nbd{Hx'}{\kappa + M_1}$.
Therefore if we fix $X-U$, then (QC-3) does not depend on the choice of 
basepoint $x \in X-U$.

Now suppose $X-U$ and $X-U'$ are two truncated spaces
corresponding to the action of $G$ on $X$.
Choose basepoints $x \in X-U$ and $x' \in X-U'$.
Shrinking horoballs, we can obtain a third truncated space $X-V$ with
$V \subseteq U \cap U'$.
In particular, note that $x$ and $x'$ both lie in $X-V$.
If $H$ satisfies (QC-3) with respect to $X-U$ with basepoint $x$,
then by Lemma~\ref{lem:TwoTruncatedSpaces}
it also satisfies (QC-3) with respect to $X-V$ with basepoint $x$.
By the preceding paragraph, $H$ also satisfies
(QC-3) with respect to $X-V$ and basepoint $x'$.
Finally, another application of Lemma~\ref{lem:TwoTruncatedSpaces}
shows that $H$ satisfies (QC-3) with respect to $X-U'$ and basepoint $x'$.

Thus (QC-3) is independent of the choice of truncated space $X-U$
and independent of the choice of basepoint $x \in X-U$.
\end{proof}

\begin{prop}\label{prop:QC23}
Definition \textup{(QC-3)} is well-defined and equivalent to
Definition \textup{(QC-2)}.
\end{prop}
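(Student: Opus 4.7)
The plan is to prove the two assertions simultaneously by showing, for any single fixed cusp uniform action of $(G,\PP)$ on $(X,\rho)$ with truncated space $X-U$ and basepoint $x \in X-U$, that $H$ satisfies (QC-3) here if and only if $H$ satisfies (QC-2) here. Combined with Proposition~\ref{prop:QC3WellDefined} (internal independence of choices within a fixed $X$) and Proposition~\ref{prop:QC12} (that (QC-2) is intrinsic), this equivalence will simultaneously force (QC-3) to be independent of the choice of space $X$, giving well-definedness.

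I will first dispose of the degenerate cases. If $H$ is finite, then $Hx$ is bounded, any geodesic between points of $Hx$ is bounded, and both conditions hold vacuously. If $H$ is parabolic with unique fixed point $p \in \boundary X$, then $p$ cannot be conical, so it is a bounded parabolic point of $G$ and is the center of some $B \in U$. Every element of $H$ preserves $B$ and the $p$-horofunction $h$, so any $X$-geodesic $c$ between two points of $Hx$ has endpoints at the same value of $h$; Lemma~\ref{lem:GeodesicInHoroball} then yields $c \cap (X-B) \subseteq \nbd{Hx}{M_2}$, and since $X - U \subseteq X - B$ this gives (QC-3). Condition (QC-2) holds by convention in this case.

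For the main case $\lvert \Lambda H \rvert \ge 2$, set $J := \join(\Lambda H)$. By Remark~\ref{rem:join}, $J$ is $\kappa$--quasiconvex in $X$ and is quasi-isometric to a geodesic $\delta'$--hyperbolic space $Y$ with $\boundary Y$ canonically identified with $\Lambda H$, and the horoballs of $U$ centered in $\Lambda H$ pull back to a family of horoballs in $Y$ which, after equivariant shrinking to guarantee disjointness, yields a truncated space $Y-V$. To prove (QC-2)~$\Longrightarrow$~(QC-3), I assume $H$ acts cusp uniformly on $Y$ so that for a basepoint $y \in Y-V$ the orbit $Hy$ is cobounded in $Y-V$; after replacing $x$ by a point of $Hx$ within bounded distance of $J$, I use $\kappa$--quasiconvexity of $J$ to place any $X$-geodesic $c$ with endpoints in $Hx$ inside $\nbd{J}{\kappa}$. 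The QI $Y \to J$ then sends $c \cap (X-U)$ to (an approximation of) a $Y$-geodesic segment between two points of $Hy$, lying in $Y-V$; cocompactness gives that this projection, and hence $c \cap (X-U)$ itself, lies in a bounded neighborhood of $Hx$. The adjustments between the two systems of horoballs are absorbed by Lemmas~\ref{lem:TwoGeodesicsTruncated} and~\ref{lem:TwoTruncatedSpaces}.

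Conversely, for (QC-3)~$\Longrightarrow$~(QC-2), I take the space required by (QC-2) to be the space $Y$ quasi-isometric to $J$ described above. The $H$-action on $Y$ is proper (inherited from $X$) and preserves $V$, so by Theorem~\ref{thm:GeomFiniteCuspUniform} it remains to verify that $H$ acts cocompactly on $Y-V$. Fix $z \in Y-V$, and pull it back to $z' \in J$ outside a controlled shrinking of $U$. Since $z' \in J$, there is a biinfinite $X$-geodesic through $z'$ with endpoints $\xi, \eta \in \Lambda H$; approximating $\xi,\eta$ by sequences in $Hx$ and passing to a limit of geodesics produces $h_1,h_2 \in H$ whose connecting geodesic $c$ passes within $O(\delta)$ of $z'$. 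Hypothesis (QC-3) places $c \cap (X-U)$, and in particular a point near $z'$, within a bounded neighborhood of $Hx$, and transporting through the QI shows $z$ is within a bounded distance of $Hy$. Hence $H\backslash(Y-V)$ is compact, completing (QC-2).

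The main technical obstacle is the calibration between the horoballs of $X$ centered at points of $\Lambda H$ and the horoballs of $Y$: one must verify that parabolic points of the $H$-action on $\boundary Y$ correspond exactly to the parabolic points of $G$ lying in $\Lambda H$ (so that the bounded-parabolic half of geometric finiteness for the $H$-action is inherited from that of $(G,\PP)$), and that the limiting-geodesic argument in the reverse direction gives uniform constants; Lemma~\ref{lem:TwoTruncatedSpaces} together with Remark~\ref{rem:join} is the correct tool for absorbing this ambiguity.
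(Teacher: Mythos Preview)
Your overall strategy matches the paper's: reduce to a fixed action via Propositions~\ref{prop:QC12} and~\ref{prop:QC3WellDefined}, handle the finite and parabolic cases separately (your treatment of these is essentially identical to the paper's), and for $\lvert\Lambda H\rvert \ge 2$ work with $J = \join(\Lambda H)$ together with a limiting-geodesic argument for the reverse direction.

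There are two places where your plan is softer than the paper's argument and would need real work to close.

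In (QC-2)~$\Longrightarrow$~(QC-3), you route through the quasi-isometry $Y \to J$ and assert that $c \cap (X-U)$ transports to something ``lying in $Y-V$''. This is the unjustified step: a point of $c \cap (X-U)$ is only within $\kappa$ of $J$, and after your equivariant shrinking of $V$ there is no clean reason its image lands outside $V$. Lemmas~\ref{lem:TwoGeodesicsTruncated} and~\ref{lem:TwoTruncatedSpaces} compare two truncated spaces \emph{within a single $X$}, so they do not directly absorb a comparison across the quasi-isometry. The paper sidesteps all of this by staying in $X$: it pulls back the cusp-uniform structure on $Y$ to an $H$--equivariant system $U'$ of horoballs in $X$ (centered at parabolic points of $H$) and then shrinks $U'$ equivariantly so that $\rho(U', X-U) > \kappa$. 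With this single shrink, any $p \in c \cap (X-U)$ is within $\kappa$ of some $q \in J$ by quasiconvexity, and the inequality $\rho(U',X-U)>\kappa$ forces $q \in J - U'$, where $H$ acts cocompactly. No transport through a quasi-isometry is needed.

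In (QC-3)~$\Longrightarrow$~(QC-2), you correctly flag the horoball-matching problem but do not resolve it, and your appeal to Theorem~\ref{thm:GeomFiniteCuspUniform} presupposes that the centers of $V$ are exactly the parabolic points of $H$, which is what is at issue. The paper resolves this geometrically in two steps after your limiting-geodesic argument has given $J - U \subseteq \nbd{Hx}{2\delta+\mu}$. First, every horoball of $U$ meeting $J$ also meets $J-U$, hence comes within $2\delta+\mu$ of $Hx$; since only finitely many horoballs meet any ball, these horoballs fall into finitely many $H$--orbits. Second, any such horoball \emph{not} based in $\Lambda H$ has bounded intersection with $J$, and the finitely many orbit bounds combine to a uniform one. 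Discarding those bounded pieces yields a system $U'$ of horoballs centered in $\Lambda H$, in finitely many $H$--orbits, with $H$ cocompact on $J - U'$; this is exactly cusp uniformity for $H$. The dynamical inheritance you gesture at (bounded parabolicity descending from $G$) is really the (QC-1) picture and would make the argument circular here.
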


\begin{proof}
In Proposition~\ref{prop:QC3WellDefined} we showed that 
for each fixed cusp uniform action of $(G,\PP)$ on a space $(X,\rho)$,
whether a subgroup $H$ satisfies
(QC-3) does not depend on the choice of truncated space and basepoint.
In Proposition~\ref{prop:QC12} 
we showed that (QC-2) does not depend on the choice of space $X$
or the choice of cusp uniform action of $(G,\PP)$ on $X$.
Thus, in order to show that (QC-3) is also independent of the space $X$
and cusp uniform action,
it suffices to show that (QC-2) holds for a particular action of $G$ on $X$
if and only if (QC-3) holds for the same action with respect to some choice
of truncated space $X-U$ and basepoint $x \in X-U$.

(QC-2)~$\Longrightarrow$~(QC-3):
Choose any truncated space $X-U$ for the action of $G$ on $X$.
If $H$ is finite, then (QC-3) is immediate since the orbit $Hx$ is bounded.
If $H$ is parabolic with parabolic fixed point $p \in \boundary X$,
and $x \in X-U$ is any basepoint,
then there is a horofunction based at $p$ that is identically zero 
on the orbit $Hx$.
Let $B$ be the horoball of $U$ centered at $p$.
By Lemma~\ref{lem:GeodesicInHoroball}, there is a constant $M_2$
such that for each geodesic $c=[a,b]$ connecting two points of $Hx$,
we have
\[
   c \cap (X-U) \subseteq \bignbd{\{a,b\}}{M_2}.
\]
Thus (QC-3) holds for $H$.

Now suppose $\Lambda H$ contains at least two points.
Then
$\join(\Lambda H)$ is nonempty and $\kappa$--quasiconvex for some $\kappa>0$.
Suppose the action of $H$ on $Y$ is cusp uniform,
where $Y$ is quasi-isometric to $\join(\Lambda H)$.
Under this quasi-isometry, every horoball of $Y$ pulls back to a subset
of $\join(\Lambda H)$ of the form $B \cap \join(\Lambda H)$ for some horoball
$B$ of $X$.
Therefore, the action of $H$ on $\join(\Lambda H)$ is ``cusp uniform''
in the sense that
$X$ contains a union of horoballs $U'$ centered at the parabolic
points of $H$ such that the horoballs lie in finitely many $H$--orbits
and such that $H$ acts cocompactly on the 
``truncated space'' $\join(\Lambda H) - U'$.

Shrink the horoballs of $U'$ $H$--equivariantly so that
$\rho(U',X-U) > \kappa$.
Then $H$ still acts cocompactly on $\join(\Lambda H) - U'$.
Let $\nu < \infty$ be the $\rho$--diameter of a compact set $K\subseteq X$
whose $H$--translates cover $\join(\Lambda H) - U'$.
Choose a basepoint $x \in \join(\Lambda H) - U'$,
and let $c$ be a geodesic in $X$ connecting two points of $Hx$.
Since the endpoints of $c$ lie in $\join(\Lambda H)$,
the $\kappa$--quasiconvexity of $\join(\Lambda H)$ implies that
each $p \in c \cap (X-U)$ is within a distance $\kappa$
of a point $q \in \join(\Lambda H)$.
Observe that $q$ is also in $\nbd{X-U}{\kappa}$.
By our choice of $U'$, it follows that both $x$ and $q$ lie in
$\join(\Lambda H) - U'$.
Therefore, for some $h \in H$, we have $\rho(hx,q) < \nu$,
establishing (QC-3).

(QC-3)~$\Longrightarrow$~(QC-2):
Suppose $(G,\PP)$ has a cusp uniform action on $(X,\rho)$,
and $H \le G$ is infinite and not parabolic.
Then $\join(\Lambda H)$
is nonempty and $\kappa$--quasiconvex for some $\kappa>0$,
and quasi-isometric to a hyperbolic geodesic space $Y$.
In order to establish (QC-2), it suffices to show that the action
of $H$ on $Y':=\join(\Lambda H)$ is cusp uniform in the above sense.

Choose a truncated space $X-U$ for the cusp uniform action of $G$ on $X$
and a basepoint $x \in Y'-U$
such that $H$ satisfies (QC-3) with respect to $X-U$ and $x$
using quasiconvexity constant $\mu$.

Then any geodesic $c$ connecting two points of $Hx$
satisfies
\[
   c \cap (X-U) \subseteq \nbd{Hx}{\mu}
\]
If $\bar{c}$ is a bi-infinite geodesic in $X$ obtained as a pointwise limit of 
such segments $c$, then $\bar{c}$ connects two points of $\Lambda H$
and satisfies
\[
   \bar{c} \cap (X-U) \subseteq \nbd{Hx}{\mu}.
\]
Since $X$ is $\delta$--hyperbolic, and each point of $\Lambda H$ is a limit
point of $Hx$, every geodesic connecting two points of $\Lambda H$
lies in the $2\delta$--neighborhood of such a geodesic $\bar{c}$.
(This is because any two bi-infinite geodesics with the same endpoints at infinity in a $\delta$--hyperbolic space have Hausdorff distance
at most $2\delta$.)
Therefore
\[
   Y'-U \subseteq \nbd{Hx}{2\delta + \mu},
\]
proving that $H$ acts cocompactly on $Y' - U$.

In order to establish (QC-2), we must show that $H$ acts cocompactly on
$Y' - U'$, where $U'$ is an $H$--equivariant family of
horoballs centered only at the parabolic points of $H$.
It suffices to show that the horoballs $B$ of $U$ that meet $Y'$
and are not based in $\Lambda H$ have uniformly bounded intersections
with $Y'$.  Since then $Y' - U$
is quasidense in $Y'-U'$.

If a horoball $B$ of $U-U'$ meets $Y'$, then it also meets $Y' - U$,
which lies in $\nbd{Hx}{2\delta + \mu}$.
Hence, $\rho(B,Hx)$ is at most $2\delta + \mu$.
Then the translate $hB$ of $B$ by some element $h \in H$ intersects
$\ball{x}{2\delta + \mu}$.
Since only finitely many horoballs of $U$ meet any metric ball in $X$,
the horoballs of $U$ meeting $Y'$ must lie in only finitely many $H$--orbits.
If such a horoball is not based in $\Lambda H$, then its intersection
with $Y'$ is bounded.  Combining these bounds for the finitely many $H$--orbits
gives a uniform bound on the diameter of all such intersections,
which completes the proof.
\end{proof}

\begin{prop}\label{prop:QC34}
Definition \textup{(QC-4)} is well-defined and equivalent to \textup{(QC-3)}.
\end{prop}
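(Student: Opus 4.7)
My plan is to establish the equivalence of \textup{(QC-3)} and \textup{(QC-4)} with respect to a single fixed choice of cusp-uniform action of $(G,\PP)$ on a proper $\delta$--hyperbolic space $(X,\rho)$, truncated space $X-U$ whose horoballs are pairwise separated by at least the constant $r(\delta)$ of Lemma~\ref{lem:LiftIsQuasigeodesic}, and basepoint $x \in X-U$. Once this is done, well-definedness of \textup{(QC-4)} will be a formal consequence of well-definedness of \textup{(QC-3)} (Proposition~\ref{prop:QC3WellDefined}): any two such choices satisfy or fail \textup{(QC-3)} together, hence by the equivalence they satisfy or fail \textup{(QC-4)} together.

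For \textup{(QC-3)}~$\Longrightarrow$~\textup{(QC-4)}, I start with an efficient, semipolygonal relative path $(\gamma,\alpha)$ that is an electric geodesic joining two points of $Hx$. Lemma~\ref{lem:LiftIsQuasigeodesic} promotes $\gamma$ to an honest $\epsilon'$--quasigeodesic in $(X,\rho)$, and stability of quasigeodesics in the $\delta$--hyperbolic space yields a Hausdorff $\rho$--bound $K=K(\delta)$ between $\gamma$ and the $\rho$--geodesic $c$ joining the same endpoints. Efficiency together with the electric-geodesic condition forces $\gamma-\alpha \subseteq X-U$: any complementary piece $\beta_i$ that dips into an open horoball could be shortened in relative length, either by enlarging the corresponding $\alpha_j$ if that horoball is already in $\alpha$ (and then absorbing the excursion) or by inserting a new $\alpha$-piece otherwise. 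Apply \textup{(QC-3)} to $c$ to obtain a constant $\mu$ with $c \cap (X-U) \subseteq \nbd{Hx}{\mu}$. For $p \in \gamma-\alpha$ pick $q \in c$ with $\rho(p,q) \le K$; if $q \in X-U$ we are finished, otherwise $q$ lies in some horoball $B$ of $U$ and $p \in X-B$, so Lemma~\ref{lem:NearHoroballComplement} bounds by a uniform $M_0$ (uniform because there are only finitely many $G$--orbits of horoballs) the length of the component of $c \cap B$ containing $q$. Hence $q$ is within $M_0$ of a point of $c \cap (X-U) \subseteq \nbd{Hx}{\mu}$, giving $p \in \nbd{Hx}{K+M_0+\mu}$.

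For \textup{(QC-4)}~$\Longrightarrow$~\textup{(QC-3)}, I begin with a $\rho$--geodesic $c$ joining two points of $Hx$ and choose an efficient, semipolygonal electric geodesic $(\gamma,\alpha)$ between the same endpoints; its existence follows from the proper length structure of the electric pseudometric, in which each horoball is collapsed to a bounded piece and geodesics lift to the desired relative paths. Lemma~\ref{lem:LiftIsQuasigeodesic} and quasigeodesic stability bound the Hausdorff $\rho$--distance between $\gamma$ and $c$ by some $K$, and \textup{(QC-4)} gives $\gamma-\alpha \subseteq \nbd{Hx}{\kappa}$. For $p \in c \cap (X-U)$ pick $q \in \gamma$ with $\rho(p,q) \le K$: if $q \in \gamma-\alpha$ the conclusion is immediate, and otherwise $q$ lies in some $\alpha_i \subseteq B_i$ with $p \in X-B_i$, so Lemma~\ref{lem:NearHoroballComplement} places $q$ within a uniform $M_0$ of an endpoint of $\alpha_i$, which lies in $\overline{\gamma-\alpha}$ and thus within $\kappa$ of $Hx$. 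Hence $p \in \nbd{Hx}{K+M_0+\kappa}$.

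The main obstacle is the consistent bookkeeping of horoball intersections: $\gamma$ and $c$ need not enter the same horoballs, so one must repeatedly trade a point deep in an ``unshared'' horoball on one path for a controlled nearby point on the boundary of that horoball on the other, a move executed by Lemma~\ref{lem:NearHoroballComplement} and relying on the uniform separation of horoballs in $U$. A secondary point is the existence of efficient semipolygonal electric geodesics between arbitrary endpoints used in the \textup{(QC-4)}~$\Longrightarrow$~\textup{(QC-3)} direction; this could be replaced, if needed, by applying the electric-quasigeodesic extension of \textup{(QC-4)} observed in the remark following that definition directly to the natural relative path obtained by tagging the intersections of $c$ itself with the horoballs of $U$.
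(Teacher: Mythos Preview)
Your proposal is correct and follows essentially the same approach as the paper: reduce well-definedness of \textup{(QC-4)} to that of \textup{(QC-3)} by fixing a single cusp-uniform action, truncated space, and basepoint, then use Lemma~\ref{lem:LiftIsQuasigeodesic} and the Morse Lemma to place $\gamma$ Hausdorff-close to $c$, invoke Lemma~\ref{lem:NearHoroballComplement} to trade points in horoballs for nearby points outside, and use that $\gamma-\alpha \subseteq X-U$ for an electric geodesic. The only cosmetic difference is that the paper packages both implications into a single bounded-Hausdorff-distance statement between $c\cap(X-U)$ and $\gamma-\alpha$, whereas you run the two directions separately; your discussion of the existence of efficient semipolygonal electric geodesics (and the alternative via the remark after \textup{(QC-4)}) is more explicit than the paper, which leaves that point implicit.
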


\begin{proof}
Recall that we have shown that if a subgroup $H\le G$ satisfies (QC-3)
with respect to a cusp uniform action, truncated space, and basepoint,
then it satisfies (QC-3) with respect to any other such choices.
Choose a proper $\delta$--hyperbolic space $(X,\rho)$, a cusp uniform 
action of $(G,\PP)$ on $X$,
a truncated space $X-U$, and a basepoint $x \in X-U$.
Suppose further that the horoballs of $U$ are pairwise separated by
a $\rho$--distance at least $r$, where $r=r(\delta)$ is the constant provided
by Lemma~\ref{lem:LiftIsQuasigeodesic}.
We will show that $H$ satisfies (QC-3) with respect to these choices
if and only if it satisfies (QC-4).

More specifically, we will show that there is a constant $L$ such that
the following holds.
Let $c$ be a $\rho$--geodesic and $(\gamma,\alpha)$ a semipolygonal
relative path that is also an electric geodesic.
If $c$ and $\gamma$ have the same endpoints in $Hx$, then the
Hausdorff $\rho$--distance between $c \cap (X-U)$ and $\gamma - \alpha$
is at most $L$.

By Lemma~\ref{lem:LiftIsQuasigeodesic}, the path $\gamma$ is an
$\epsilon$--quasigeodesic in $(X,\rho)$ for some uniform constant $\epsilon$.
By the Morse Lemma, the Hausdorff $\rho$--distance between $c$ and $\gamma$
is at most $\eta = \eta(\delta,\epsilon)$.
Since $(\gamma,\alpha)$ is an electric geodesic with respect to $(X,U)$,
the subset $\gamma - \alpha$ must lie in $X-U$.
Therefore any point $p \in \gamma-\alpha$ is within a $\rho$--distance
$\eta$ of a point $q \in c \cap \nbd{X-U}{\eta}$.
By Lemma~\ref{lem:NearHoroballComplement}
the point $q$ is within a $\rho$--distance
$M_0$ of a point $q' \in c \cap (X-U)$,
where $M_0$ depends only on $\eta$ and $U$.
(Here we are using that $\nbd{X-U}{\eta}$
differs from $X-U$ on only finitely many orbits of horoballs.)
A similar argument (using a quasigeodesic variation of
Lemma~\ref{lem:NearHoroballComplement})
bounds the distance from an arbitrary point
of $\gamma-\alpha$ to $c \cap (X-U)$.
Thus we have an upper bound on the Hausdorff $\rho$--distance between
$\gamma-\alpha$ and $c \cap (X-U)$.
It is now clear that if either set lies near $Hx$, then so does the other.
\end{proof}

It is a well-known result, first observed by Efromovich \cite{Efromovich53}
that if $X$ is a connected length space,
and $G$ is a group acting metrically properly, coboundedly,
and isometrically on $X$,
then $G$ is finitely generated and quasi-isometric to $X$.
Recall that a length space is connected if and only if all distances
are finite.
The following result dealing with actions on arbitrary
length spaces is an easy corollary to Efromovich's Theorem.

\begin{prop}[Disconnected Efromovich's Theorem]
\label{prop:EfromovichDisconnected}
Let $X$ be a length space \textup{(}possibly disconnected\textup{)}.
Let $G$ be a countable group with proper, left invariant metric $d$
acting metrically properly, coboundedly, and isometrically on $X$.
Let $Y$ be a component of $X$.
Then the $G$--translates of $Y$ cover $X$,
the stabilizer $H$ of $Y$ has a finite generating set $\Set{S}$,
and for each basepoint $x_0 \in X$, the map $g \mapsto g(x_0)$
induces a quasi-isometry from the \textup{(}possibly
disconnected\textup{)} Cayley graph $\Cayley(G,\Set{S})$ to $X$.
\end{prop}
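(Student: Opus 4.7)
The plan is to reduce the statement to the classical (connected) Efromovich theorem stated just before the proposition. The key observation is that in a length space $X$, any two points in distinct components lie at infinite distance, so every bounded subset of $X$ is contained in a single component.

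First I would invoke coboundedness to choose a bounded set $K \subset X$ with $G \cdot K = X$. By the key observation, $K$ lies in a single component, say $Y_0$. For any $x \in X$, writing $x = gk$ with $k \in K$ shows $x \in gY_0$, so $G$ acts transitively on the set of components; in particular the translates of $Y$ cover $X$, giving the first conclusion. Setting $H_0 := \Stab_G(Y_0)$, any $y \in Y_0$ can be written $y = gk$ with $k \in K \subset Y_0$, forcing $gY_0 = Y_0$ and hence $g \in H_0$. Thus $H_0$ acts coboundedly on $Y_0$ through $K$, and the action remains metrically proper since properness is inherited by any restriction to an invariant subspace.

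Next, since $Y_0$ is a connected length space, the classical Efromovich theorem produces a finite generating set $\mathcal{S}_0$ of $H_0$ such that, for any base point $y_0 \in Y_0$, the orbit map $h \mapsto h(y_0)$ is a quasi-isometry $\Cayley(H_0, \mathcal{S}_0) \to Y_0$. I would then transport this to the component $Y$ by picking $g_0 \in G$ with $g_0 Y_0 = Y$. Then $H = g_0 H_0 g_0^{-1}$ is finitely generated by $\mathcal{S} := g_0 \mathcal{S}_0 g_0^{-1}$, and the orbit map $h \mapsto h(g_0 y_0)$ is a quasi-isometry $\Cayley(H, \mathcal{S}) \to Y$, proving the second conclusion.

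Finally, to obtain the global quasi-isometry, fix $x_0 \in X$; replacing $x_0$ by a $G$-translate changes the orbit map by a bounded amount on each component, so I may assume $x_0 \in Y$. Every connected component of $\Cayley(G, \mathcal{S})$ is a left coset $gH$, and the orbit map sends $gH$ into the component $gY$ of $X$. Writing elements of the coset as $gh$ with $h \in H$, left-invariance of the Cayley-graph metric together with the isometric action of $G$ on $X$ reduces the quasi-isometry estimate on the coset $gH$ to the estimate already obtained for $H$ acting on $Y$, so the quasi-isometry constants are uniform across cosets. Vertices in distinct cosets map to distinct components, and both the Cayley-graph distance and the $X$-distance between such pairs are infinite, so the bookkeeping matches; coarse surjectivity follows from coboundedness. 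The main obstacle is the content of the second paragraph, namely verifying that the stabilizer of a single component inherits both a proper and a cobounded action on that component; once this is done, the passage from the component to the whole disconnected picture is formal via $G$-equivariance.
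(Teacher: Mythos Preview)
Your proposal is correct and follows essentially the same approach as the paper: reduce to the connected Efromovich theorem on a single component by showing the stabilizer acts coboundedly there, then transport globally via the bijection between left cosets of the stabilizer and components of $X$. The paper's proof is terser (it works directly with $Y$ rather than introducing an auxiliary $Y_0$ and conjugating) but the content is the same.
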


\begin{proof}
Since any two components of $X$ are separated by an infinite distance,
it is clear that $H$ acts coboundedly on $Y$.
Applying Efromovich's theorem to the action of $H$ on the connected
length space $Y$ gives a finite generating set $\Set{S}$ for $H$ and shows
that $\Cayley(H,\Set{S})$ is quasi-isometric to $Y$.
Note that if $H \ne G$, then $\Cayley(G,\Set{S})$ and $X$
are both disconnected with all components separated by infinite distances.
Since $G$ acts on $X$ with a connected quotient, the translates of $Y$
cover $X$.
In other words, the path components of $X$ are precisely the translates
of $Y$, which are in one-to-one
correspondence with the left cosets $gH$ of $H$ in $G$,
which in turn correspond to the path components in the Cayley graph.
\end{proof}

\begin{prop}
\label{prop:ElectricQIRelCayley}
Suppose $(G,\PP)$ is relatively hyperbolic
with finite relative generating set $\Set{S}$.
Let $(G,\PP)$ have a cusp uniform action on $(X,\rho)$
with associated truncated space $X-U$.
Suppose the horoballs of $U$ are pairwise separated by some minimum
positive distance $r$.

For each basepoint $x \in X-U$ the orbit map $g \mapsto g(x)$
extends to a quasi-isometry
\[
   f\colon \Cayley(G,\Set{S} \cup \Set{P}) \to (X,\hat\rho),
\]
where $\hat\rho$ denotes the electric pseudometric for $(X,U)$.
Furthermore, $f$ can be chosen so that each geodesic edge-path
in the Cayley graph maps to an efficient semi-polygonal relative
path in $(X,U)$.
\end{prop}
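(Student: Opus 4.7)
The plan is to define $f$ on vertices by the orbit map $g\mapsto g(x)$ and extend across edges so that each edge image has uniformly bounded $\hat\rho$-length. For $s\in\Set{S}$, send the edge from $g$ to $gs$ to a $\rho$-geodesic in $X$ from $g(x)$ to $gs(x)$; since $\Set{S}$ is finite, its $\rho$-length is at most $C_{\Set{S}}:=\max_{s\in\Set{S}}\rho(x,s(x))$. For $p\in P_i\setminus\{1\}$, let $B_i\in U$ be the horoball stabilized by $P_i$, and send the edge from $g$ to $gp$ to a path composed of a $\rho$-geodesic from $g(x)$ to $gB_i$, a $\rho$-geodesic across $gB_i$, and a $\rho$-geodesic from $gB_i$ to $gp(x)$. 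Since $gP_ig^{-1}$ preserves a horofunction centered at the parabolic point of $gB_i$, the orbit points $g(x)$ and $gp(x)$ lie at a uniformly bounded $\rho$-distance from $gB_i$, so the $\hat\rho$-length of this edge image is bounded by some $L_0$ independent of $g$ and $p$. Thus $f$ is $L$-Lipschitz for $L=\max\{L_0,C_{\Set{S}}\}$. Coarse density follows from the cusp uniform action: translates of a compact fundamental domain in $X-U$ cover $X-U$ up to some $\rho$-distance $D$, and each horoball collapses to a point under $\hat\rho$.

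The main obstacle is the quasi-isometric embedding direction, namely to bound $d_{\Set{S}\cup\Set{P}}(g_1,g_2)$ above by a linear function of $\hat\rho(g_1(x),g_2(x))$. My strategy is to take an efficient semipolygonal electric geodesic $(\gamma,\alpha)$ from $g_1(x)$ to $g_2(x)$, decomposed as
\[
   \gamma=\beta_0\cup\alpha_1\cup\beta_1\cup\cdots\cup\alpha_n\cup\beta_n,
\]
with $\alpha_i\subseteq B_i\in U$, $\beta_i\subseteq X-U$, and relative length $\sum|\beta_i|=\hat\rho(g_1(x),g_2(x))$. Each $\beta_i$ can be shadowed by a sequence of orbit points whose consecutive $\rho$-jumps have length at most $2D$; proper discontinuity of the $G$-action forces the set $\set{g\in G}{\rho(x,g(x))\le 2D}$ to be finite, so each jump is a bounded word in $\Set{S}\cup\Set{P}$, and the whole $\beta_i$ is traversed in at most $C_1|\beta_i|+1$ letters. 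To cross each $\alpha_i$, the entering and exiting orbit shadows both lie in a bounded neighborhood of $B_i$; since only finitely many $G$-translates of horoballs meet a bounded ball around $x$, the translate $h^{-1}B_i$ (for $h$ the orbit shadow at the start of $\alpha_i$) equals $bB_j$ for some $b$ in a finite set, and cocompactness of the action of $bP_jb^{-1}$ on the horosphere of $bB_j$ in $X-U$ allows the transition to be expressed as $b\,p\,b^{-1}\,\epsilon$ with $p\in P_j$ and $b,\epsilon$ in finite sets, a bounded word in $\Set{S}\cup\Set{P}$ using a single letter of $\Set{P}$. Finally, the separation hypothesis gives $r(n-1)\le\sum|\beta_i|$, bounding $n$ linearly in $\hat\rho$; summing over $i$ yields the required estimate.

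For the final clause, the construction above already sends each $\Set{S}$-edge to a $\rho$-geodesic in $X$ and each $\Set{P}$-edge to a piecewise $\rho$-geodesic whose horoball portion is a $\rho$-geodesic in $X$. Concatenating along a geodesic edge-path in $\Cayley(G,\Set{S}\cup\Set{P})$ therefore produces a semipolygonal relative path in $(X,U)$. Efficiency --- no horoball penetrated twice --- follows because the vertices in any coset $gP_i$ form a clique in the Cayley graph: if two $\Set{P}$-edges of the path lay in the same coset $gP_i$, the intervening segment could be bypassed by a single edge, contradicting geodesicity of the edge-path.
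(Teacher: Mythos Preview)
Your proof is correct and takes a genuinely different route from the paper's. The paper establishes the quasi-isometry indirectly via a Milnor--\v{S}varc argument: it applies the Disconnected Efromovich Theorem (Proposition~\ref{prop:EfromovichDisconnected}) to the cocompact $G$--action on the truncated space $X-U$ with its induced length metric, obtaining a quasi-isometry $\Cayley(G,\Set{T}) \to (X-U,\bar\rho)$ for some auxiliary finite relative generating set $\Set{T}$ coming from the geometry. Coning off both sides along cosets/horoballs then gives the quasi-isometry with $(X,\hat\rho)$, and finally one passes from $\Set{T}$ to the given $\Set{S}$ using that any two finite relative generating sets yield bilipschitz relative Cayley graphs. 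You instead prove the lower bound directly by shadowing an efficient semipolygonal electric geodesic: the $\beta_i$ pieces are tracked by orbit points in $X-U$ (using cocompactness and proper discontinuity to bound the word length of each jump), the $\alpha_i$ crossings are handled by noting that only finitely many horoballs meet a fixed ball about $x$ so each crossing costs a bounded word, and the separation hypothesis bounds the number $n$ of crossings linearly in the electric distance.

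The paper's approach is slicker and packages the hard direction into a single invocation of Milnor--\v{S}varc, at the cost of introducing an auxiliary generating set and a change-of-generating-set step. Your approach is more hands-on but entirely self-contained and makes the mechanism transparent; it also uses the separation hypothesis directly rather than going through the length-space structure of $X-U$. Both proofs handle the final clause (semipolygonality and efficiency of images of Cayley-graph geodesics) in essentially the same way, via the observation that two $\Set{P}$--edges landing in the same horoball would force a shortcut in the Cayley graph.
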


\begin{proof}
Choose a basepoint $x \in X-U$, and let $Z$ be the rectifiable-path
component of $X-U$ containing $x$.  Then $Z$ is connected when endowed
with the induced length metric.
Let $Y$ be the union of all $G$--translates of $Z$.
Since $G$ acts cocompactly on $(X-U,\rho)$ and $Y$ is a nonempty,
$G$--equivariant subspace of $X-U$, it follows that $Y$ is
quasi-dense in $(X-U,\rho)$.
Therefore, $G$ acts on $Y$ with a rectifiable-path connected quotient.
Let $\bar\rho$ denote the length metric on $Y$ induced by $\rho$.
Then $G$ acts coboundedly, metrically properly, and isometrically on
$(Y,\bar\rho)$.
By Proposition~\ref{prop:EfromovichDisconnected}, the stabilizer
$H$ of $Z$ is generated by a finite set $\Set{T}$, and
the orbit map $g \mapsto g(x)$ induces a quasi-isometry
$\Cayley(G,\Set{T}) \to (Y,\bar\rho)$.
Under this quasi-isometry, the set of left cosets $gP$ for all $g \in G$
and all $P \in \PP$ corresponds with
the set of horoballs in $U$.
The electric space $(X,\hat{\rho})$ is quasi-isometric
to its quasi-dense subspace $(Y,\hat{\rho})$,
which in turn is clearly quasi-isometric to
$\Cayley(G,\Set{T} \cup \Set{P})$.
In particular, since the electric space is connected, this Cayley graph
is also connected, establishing that $\Set{T}$
is a relative generating set for $(G,\PP)$.
By an elementary argument (see, for instance, Osin
\cite[Proposition~2.8]{Osin06}),
the identity $G \to G$ induces a quasi-isometry
\[
   \Cayley(G,\Set{S} \cup \Set{P})
   \to \Cayley(G,\Set{T} \cup \Set{P})
\]
for any other relative generating set $\Set{S}$.
Composing these quasi-isometries gives a quasi-isometry
\[
   f\colon\Cayley(G,\Set{S} \cup \Set{P})
    \to (X,\hat\rho)
\]
mapping $g \mapsto g(x)$ for each $g \in G$.

Since $G$ is quasi-dense in the Cayley graph,
we can modify $f$ so that it
sends the edges of the Cayley graph to any relative paths we like,
provided that the relative lengths of these paths are uniformly bounded. 
For each generator $s \in \Set{S}$, choose
a $\rho$--geodesic $\beta_s$ from $x$ to $s(x)$, considered as a relative path
$(\gamma,\alpha)$ with $\alpha$ empty.
Let $f$ map each Cayley graph edge labeled by $s$
to the appropriate translate of $\beta$.
For each $P \in \mathcal{P}$,
choose a $\rho$--geodesic $\beta_P$ from $x$ to the horoball $B$
stabilized by $P$.
For each generator labeled by $p \in P$, let $(\gamma_p,\alpha_p)$
be the relative path with $\gamma_p := \beta_P \cup \alpha_p \cup
\overline{(p \of \beta_P)}$, where $\alpha_p$ is a $\rho$--geodesic
path in $B$ from the terminal point $b$ of $\beta_P$ to the point $p(b)$,
and where the bar indicates following a path
in the reverse direction.
Let $f$ map each Cayley graph labeled by $p$ to the appropriate translate
of $(\gamma_p,\alpha_p)$.

Observe that each edge path in the Cayley graph is mapped by $f$
to a semipolygonal relative path. Furthermore, if the image $(\gamma,\alpha)$
is not efficient, and $\alpha$ is a disjoint union of paths
$\alpha_1,\dots,\alpha_n$, then for some $i<j$
the paths $\alpha_i$ and $\alpha_j$ lie in the same horoball $B$.
Let $e_i$ and $e_j$ be the corresponding edges in the Cayley graph.
It follows that the initial point $v$ of $e_i$ and the terminal point $w$
of $e_j$ lie in the same left coset $gP$ of some $P \in \PP$.
In particular, in the Cayley graph, the distance from $v$ to $w$
is at most $1$, so the given edge path cannot be geodesic.
\end{proof}

\begin{prop}\label{prop:QC45}
Definition \textup{(QC-5)} is well-defined and equivalent to \textup{(QC-4)}.
\end{prop}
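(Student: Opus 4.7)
The plan is to reduce the statement to the already-established well-definedness of \textup{(QC-4)} (Proposition~\ref{prop:QC34}), by showing that, for any fixed cusp uniform action and any fixed choices of $\Set{S}$ and $d$, \textup{(QC-5)} for those choices is equivalent to \textup{(QC-4)} for a corresponding cusp uniform action, truncated space and basepoint. The main bridge is Proposition~\ref{prop:ElectricQIRelCayley}, which supplies a quasi-isometry $f \colon \Cayley(G,\Set{S} \cup \Set{P}) \to (X,\hat\rho)$ with $f(g) = g(x)$ and sending each geodesic edge-path to an efficient semipolygonal relative path in $(X,U)$.

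Before comparing the two definitions I will record two preliminary observations. First, the orbit map $g \mapsto g(x)$ provides a uniformly proper $G$-equivariant dictionary between $d$-distances in $G$ and $\rho$-distances in $Gx$: properness of $d$ forces $d$-balls to be finite and hence to have $\rho$-bounded image, while properness of the action conversely sends $\rho$-bounded subsets of $Gx$ to finite and hence $d$-bounded subsets of $G$; by left invariance and $G$-equivariance, there exist, for each $\kappa$, constants $\kappa'$ (and conversely) such that $g$ lies within $d$-distance $\kappa$ of $H$ if and only if $g(x)$ lies within $\rho$-distance $\kappa'$ of $Hx$. Second, an inspection of the edge-by-edge construction of $f$ in the proof of Proposition~\ref{prop:ElectricQIRelCayley} shows that for any edge-path $\bar c$ in the Cayley graph the non-horoball portion $\gamma - \alpha$ of the image $(\gamma,\alpha) := f(\bar c)$ is a union of uniformly bounded arcs based at the vertex-images $g_i(x)$, so $\gamma - \alpha$ and $\{g_i(x)\}$ have Hausdorff $\rho$-distance bounded by some constant $L$ depending only on the chosen paths $\beta_s$, $\beta_P$.

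For the implication \textup{(QC-5)}~$\Longrightarrow$~\textup{(QC-4)} I start from an arbitrary efficient semipolygonal electric geodesic $(\gamma_0,\alpha_0)$ joining points of $Hx$ and compare it with the image $(\gamma,\alpha) := f(\bar c)$ of a Cayley-graph geodesic $\bar c$ whose endpoints are the corresponding elements of $H$. Both $\gamma_0$ and $\gamma$ are $\rho$-quasigeodesics with common endpoints by Lemma~\ref{lem:LiftIsQuasigeodesic}, so the Morse-type argument used in the proof of Proposition~\ref{prop:QC34} controls the Hausdorff $\rho$-distance between $\gamma - \alpha$ and $\gamma_0 - \alpha_0$; the hypothesis \textup{(QC-5)} then bounds the $d$-distance of each vertex of $\bar c$ from $H$, the dictionary translates this to a $\rho$-bound of each $g_i(x)$ from $Hx$, and the $L$-bound transfers it to $\gamma - \alpha$ and hence to $\gamma_0 - \alpha_0$, yielding \textup{(QC-4)}. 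The reverse direction is more direct: $f(\bar c)$ for a Cayley-graph geodesic joining two points of $H$ is an efficient semipolygonal electric quasigeodesic (since $f$ is a quasi-isometry), the quasigeodesic analogue of \textup{(QC-4)} noted in the remark after that definition places $\gamma - \alpha$ in a $\rho$-neighborhood of $Hx$, so each $g_i(x)$ lies near $Hx$ and each $g_i$ lies near $H$ in $d$ via the dictionary.

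The main technical obstacle will be careful bookkeeping of constants in the two-way passage between $\bar\Gamma$ and $X$: in particular, verifying the quasigeodesic analogue of \textup{(QC-4)} from the remark (via Lemma~\ref{lem:LiftIsQuasigeodesic} and a Morse-type argument), and extracting the uniform bound $L$ from the explicit form of $f$. Once the equivalence \textup{(QC-4)}~$\Longleftrightarrow$~\textup{(QC-5)} is in hand, the well-definedness of \textup{(QC-5)} as a property of the pair $(H,G)$ is immediate from Proposition~\ref{prop:QC34}.
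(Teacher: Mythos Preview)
Your proposal is correct and follows essentially the same approach as the paper. The paper differs only in packaging: rather than invoking a general ``uniformly proper dictionary'' between an arbitrary proper left-invariant metric $d$ and $\rho$-distances in $Gx$, it first shows directly that \textup{(QC-5)} is independent of the choice of $d$ (using that $d$-balls are finite, hence bounded in any other proper left-invariant metric), then works with the specific orbit pseudometric $d_G(g_1,g_2) := \rho\bigl(g_1(x),g_2(x)\bigr)$, for which the dictionary is the identity; otherwise the use of Proposition~\ref{prop:ElectricQIRelCayley}, Lemma~\ref{lem:LiftIsQuasigeodesic}, the Morse Lemma, and the comparison of $\gamma-\alpha$ with the vertex images is the same.
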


\begin{proof}
Fix a finite relative generating set $\Set{S}$ for $(G,\PP)$, and suppose
$H$ satisfies (QC-5)
with respect to $\Set{S}$ and some proper left-invarant metric $d$ with
quasiconvexity constant $\kappa=\kappa(\Set{S},d)$.
We first show that (QC-5) continues to hold if we replace $d$ with another
proper left-invariant metric $d'$.
Let $\Set{B}$ be the finite ball of radius $\kappa$ centered at the identity
in the metric space $(G,d)$.  Let
\[
   \kappa' := \max \set{d'(1,g)}{g \in \Set{B}} + 1 < \infty.
\]
We will see that (QC-5) holds for $\Set{S}$ and $d'$ with
$\kappa(\Set{S},d') := \kappa'$.
Let $\bar{c}$ be any geodesic in $\Cayley(G,\Set{S} \cup \Set{P})$
connecting two points of $H$,
and let $v \in G$ be a vertex of $c$.
Then there is an element $w \in H$ such that $d(v,w) < \kappa$.
Since $d$ is left-invariant, $d(1,v^{-1}w) < \kappa$ so that
$v^{-1}w \in \Set{B}$.
But then $d'(v,w) = d'(1,v^{-1}w) < \kappa'$ as desired,
establishing (QC-5) for $\Set{S}$ and $d'$.
Note that the above argument holds even if $d$ or $d'$ is a
pseudometric (rather than a metric) provided that both are proper
and left-invariant.

Now let us consider the equivalence of (QC-4) and (QC-5).
Fix a finite relative generating set $\Set{S}$ for $(G,\PP)$ and
a cusp uniform action of $(G,\PP)$
on a proper $\delta$--hyperbolic space $(X,\rho)$
with an associated truncated space $X-U$.
Suppose further that the horoballs of $U$ are pairwise separated
by at least a distance $r$, where $r=r(\delta)$ is the constant given by
Lemma~\ref{lem:LiftIsQuasigeodesic}.
Fix a basepoint $x \in X-U$.
Define a proper left-invariant pseudometric $d_G$ on $G$
using distances in $X$ between orbit points;
that is, we set
\[
   d_G(g_1,g_2) := \rho\bigl(g_1(x),g_2(x) \bigr).
\]
To prove the proposition, it suffices to show that
(QC-5) holds for $\Set{S}$ and $d_G$ if and only if
(QC-4) holds for $X$, $U$, and $x$.
Since $\Set{S}$ is arbitrary,
it therefore follows that (QC-5) is independent of the choice of $\Set{S}$.

(QC-4)~$\Longrightarrow$~(QC-5):
%
By Proposition~\ref{prop:ElectricQIRelCayley}, there is an
$\epsilon$--quasi-isometry
\[
   f\colon\Cayley(G,\Set{S} \cup \Set{P}) \to (X,\hat\rho)
\]
for some $\epsilon>0$,
such that $f$ maps $g\in G$ to $g(x)$
and such that each geodesic $c$ in $\Cayley(G,\Set{S} \cup \Set{P})$
with endpoints in $H$
maps to an efficient, semipolygonal relative $\epsilon$--quasigeodesic
$(\gamma,\alpha)$
in $(X,U')$ such that $\gamma$ has endpoints in $Hx$.
Let $(\gamma',\alpha')$ be an efficient, semipolygonal relative geodesic
such that $\gamma'$ has the same endpoints as $\gamma$.
By Lemma~\ref{lem:LiftIsQuasigeodesic}, $\gamma$ and $\gamma'$ are
$\epsilon'$--quasigeodesics in $(X,\rho)$
for some $\epsilon'$ depending on $\epsilon$.
In particular, by the Morse Lemma
the Hausdorff $\rho$--distance between $\gamma$ and $\gamma'$
is at most $L=L(\delta,\epsilon')$.
As in the proof of Proposition~\ref{prop:QC34}
there is a uniform upper bound $L'$ on the Hausdorff $\rho$--distance
between $\gamma - \alpha$ and $\gamma' - \alpha'$.

Choose an arbitrary vertex $g$ of $c$.  Being a vertex of the Cayley
graph, $g$ is an element of $G$.  Thus $f$ maps $g$ to the orbit point
$g(x) \in \gamma - \alpha$, which is within a $\rho$--distance $L'$ of
some $y \in \gamma' - \alpha'$.
By (QC-4), the point $y$ is within a $\rho$--distance $\kappa$ of $Hx$.
Applying the triangle inequality shows that $\rho(g(x),Hx) < L + \kappa$.
By our definition of $d_G$, we also have $d_G(g,H) < L+\kappa$,
establishing (QC-5).

(QC-5)~$\Longrightarrow$~(QC-4):
Let $(\gamma',\alpha')$ be an efficient, semipolygonal relative geodesic
in $(X,U)$ such that $\gamma'$ has endpoints $hx$ and $h'x$.
Let $c$ be a geodesic in $\Cayley(G,\Set{S} \cup \Set{P})$
with endpoints $h$ and $h'$.
As above, $f$ maps $c$ to an efficient, semipolygonal relative
$\epsilon$--quasigeodesic $(\gamma,\alpha)$, and the Hausdorff
$\rho$--distance between $\gamma-\alpha$ and $\gamma'-\alpha'$ is
bounded above by a constant $L'$ that does not depend on the choice of
$h,h' \in H$.
Each point $y \in \gamma' - \alpha'$ is within a $\rho$--distance $L'$
of a point $z \in \gamma - \alpha$, which is within a $\rho$--distance
$\epsilon$ of $gx$ for some vertex $g$ of $c$.
By (QC-5), we have $d_G(g,H) <\kappa$.
Thus by the definition of $d_G$, we have $\rho\bigl( g(x),Hx \bigr) <\kappa$
as well.
By the triangle inequality it follows that
\[
   \rho(y,Hx) < L' + \epsilon + \kappa,
\]
establishing (QC-4).
\end{proof}

\section{Relative quasiconvexity in the word metric}
\label{sec:QCWordMetric}

In this section we characterize relatively
quasiconvex subgroups of finitely generated
relatively hyperbolic groups in terms of the geometry of the word metric.
We emphasize that throughout this section $(G,\PP)$ always denotes a finitely 
generated relatively hyperbolic group, and $\Set{S}$ is always a finite 
generating set for $G$ \emph{in the traditional sense}.
As above, $\Set{P}$ denotes the union of the peripheral subgroups $P\in\PP$.

If $c$ is a path in $\Cayley(G,\Set{S} \cup \Set{P})$,
a \emph{lift} of $c$ is a path formed from $c$ by replacing each edge
of $c$ labelled by an element of $\Set{P}$ with a geodesic in
$\Cayley(G,\Set{S})$.  The $\Set{S}$-edges are left unchanged.

If $c$ is a path in $\Cayley(G,\Set{S})$ and $M > 0$,
the \emph{$M$--saturation} of $c$ is the union of $c$ together with
all peripheral cosets $gP$ such that $d_{\Set{S}}(c,gP) < M$.

We will need to use several results due to \Drutu--Sapir
\cite{DrutuSapirTreeGraded}
describing the geometry of the word metric for a finitely
generated relatively hyperbolic group.  These results are collected below.

The first result states that neighborhoods of peripheral subgroups
are quasiconvex.

\begin{thm}[Lemma~4.15, \cite{DrutuSapirTreeGraded}]
\label{thm:PeripheralQuasiconvex}
Let $(G,\PP)$ be a relatively hyperbolic group with finite
generating set $\Set{S}$.
For each $A_0$ there is a constant $A_1=A_1(A_0)$ such that the following
holds in $\Cayley(G,\Set{S})$.
Let $c$ be a geodesic segment whose endpoints lie in the $A_0$--neighborhood
of a peripheral subgroup $P \in \PP$.
Then $c$ lies in the $A_1$--neighborhood of $P$.
\end{thm}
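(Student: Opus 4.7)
The plan is to work in the cusped space $X$ from Section~\ref{sec:CuspedSpace}, which is $\delta$--hyperbolic by Theorem~\ref{thm:AugmentedHyperbolic}. Each peripheral coset $gP$ is the level-$0$ set of a combinatorial horoball $B_{gP}$, and $\Cayley(G,\Set{S})$ embeds isometrically at level $0$ into $X$.

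Given an $\Set{S}$--geodesic $c$ from $x$ to $y$, with $x,y$ within $\Set{S}$--distance $A_0$ of $P$, I first choose $p_x,p_y \in P$ realizing these distance bounds. In $X$ one can form a ``horoball shortcut'' path from $x$ to $p_x$, up through the upper levels of $B_P$ from $p_x$ to $p_y$, and back down to $y$, of total $X$--length at most $2A_0 + O(\log d_{\Set{S}}(p_x,p_y))$. In particular, $d_X(x,y)$ grows only logarithmically in the $\Set{S}$--length of $c$, which shows that $c$ itself is far from an $X$--geodesic.

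To control $c$, I would consider the quadrilateral in $X$ with vertices $x,p_x,p_y,y$. Three of its sides, namely $[x,p_x]$, an $X$--geodesic inside $B_P$ from $p_x$ to $p_y$, and $[p_y,y]$, all lie in the $A_0$--neighborhood of $B_P$. The fourth ``side'' is $c$, which is not an $X$--geodesic. To apply hyperbolicity I would replace $c$ with an efficient semipolygonal path $(\gamma,\alpha)$, obtained by rerouting each long excursion of $c$ inside a peripheral coset $gQ$ through the corresponding horoball $B_{gQ}$ of $X$. By Lemma~\ref{lem:LiftIsQuasigeodesic}, this $\gamma$ is a uniform $X$--quasigeodesic, and by construction its Cayley-graph part $\gamma-\alpha$ coincides with $c$ up to bounded $\Set{S}$--error.

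The Morse lemma for $\delta$--hyperbolic spaces then forces $\gamma$ to lie in a uniformly bounded $X$--neighborhood of the other three sides, hence in a uniformly bounded $X$--neighborhood of $B_P$. Consequently every vertex $z$ of $c$ lies within bounded $X$--distance of $B_P$. Since $z$ sits at level $0$, Lemma~\ref{lem:NearHoroballComplement} converts this $X$--proximity to $B_P$ into bounded $\Set{S}$--proximity to the level-$0$ slice $P$ of $B_P$, yielding the desired constant $A_1$ depending only on $A_0$, $\delta$, and the horoball data.

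The main obstacle is justifying the replacement of $c$ by the quasigeodesic $\gamma$: since $X$ has logarithmic shortcuts through every horoball, an $\Set{S}$--geodesic is typically very far from an $X$--geodesic, so the stability of quasigeodesics does not apply directly to $c$. The key point is that $c$ \emph{is} an electric geodesic modulo the peripheral structure, and Lemma~\ref{lem:LiftIsQuasigeodesic} precisely upgrades this electric property to genuine $X$--quasi-geodesicity after the semipolygonal rerouting; making this rerouting canonical and verifying that it introduces only bounded $\Set{S}$--error is where the work lies.
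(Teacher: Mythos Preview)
The paper does not give its own proof of this statement: it is quoted as Lemma~4.15 of \Drutu--Sapir \cite{DrutuSapirTreeGraded}, where it is established via asymptotic cones and the tree-graded structure of $\Cayley(G,\Set{S})$. So there is no in-paper argument to compare against.

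Your cusped-space approach has a genuine gap precisely where you flag the ``main obstacle.'' You claim that $c$ is an electric geodesic modulo the peripheral structure, so that Lemma~\ref{lem:LiftIsQuasigeodesic} upgrades a semipolygonal rerouting of $c$ to an $X$--quasigeodesic. But an $\Set{S}$--geodesic is \emph{not} an electric quasigeodesic in general: when $x,y$ lie in a common peripheral coset, the electric distance between them is bounded while the relative length of $(c,\emptyset)$ equals $\abs{c}_{\Set{S}}$, which is arbitrarily large. To build an honest electric quasigeodesic out of $c$ you must first locate the subsegments of $c$ that travel near peripheral cosets and reroute exactly those through horoballs---but identifying such subsegments is precisely what Theorem~\ref{thm:PeripheralQuasiconvex} (and its companions Theorems~\ref{thm:DSMorse} and~\ref{thm:Isolated}) supply. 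So the rerouting step presupposes the structure you are trying to prove, and the argument is circular as written.

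Two smaller points compound this. First, $\Cayley(G,\Set{S})$ does \emph{not} embed isometrically at level~$0$ of $X$: every combinatorial horoball provides logarithmic shortcuts, so $d_X \le d_{\Set{S}}$ with no reverse inequality. Second, Lemma~\ref{lem:NearHoroballComplement} bounds the length of $X$--geodesics lying in $B_P \cap \nbd{X-B_P}{L}$; it does not turn ``$z$ is $X$--close to $B_P$'' into ``$z$ is $d_{\Set{S}}$--close to $P$,'' since the short $X$--path from $z$ to $B_P$ may itself shortcut through other horoballs.
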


In particular, each peripheral subgroup $P \in \PP$ is quasiconvex
with respect to $\Set{S}$.
It follows easily that conjugates of peripheral subgroups
are quasiconvex as well.

\begin{cor}
\label{cor:PeripheralConjugateQC}
For each $P \in \PP$ and $g \in G$, the subgroup $gPg^{-1}$
is quasiconvex with respect to $\Set{S}$.
\end{cor}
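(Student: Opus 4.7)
The plan is to derive quasiconvexity of $gPg^{-1}$ from quasiconvexity of $P$ itself (more precisely, from the quasiconvex--neighborhoods statement in Theorem~\ref{thm:PeripheralQuasiconvex}) by combining a left-translation with a bounded Hausdorff comparison between $gPg^{-1}$ and the left coset $gP$.

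First I would compare $gPg^{-1}$ with $gP$. Using left-invariance of $d_{\Set{S}}$, for any $p \in P$ we have $d_{\Set{S}}(gpg^{-1}, gp) = d_{\Set{S}}(1,g) = \abs{g}_{\Set{S}}$, and similarly in the reverse direction. Hence both inclusions $gPg^{-1} \subseteq \nbd{gP}{\abs{g}_{\Set{S}}}$ and $gP \subseteq \nbd{gPg^{-1}}{\abs{g}_{\Set{S}}}$ hold.

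Next, let $c$ be a $\Set{S}$--geodesic with endpoints in $gPg^{-1}$. By the first inclusion above, the endpoints of $c$ lie in $\nbd{gP}{\abs{g}_{\Set{S}}}$. Left multiplication by $g^{-1}$ is an isometry of $\Cayley(G,\Set{S})$, and sends this to a geodesic $g^{-1}c$ whose endpoints lie in $\nbd{P}{\abs{g}_{\Set{S}}}$. Applying Theorem~\ref{thm:PeripheralQuasiconvex} with $A_0 = \abs{g}_{\Set{S}}$ yields a constant $A_1 = A_1(\abs{g}_{\Set{S}})$ such that $g^{-1}c \subseteq \nbd{P}{A_1}$. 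Translating back by $g$ gives $c \subseteq \nbd{gP}{A_1}$, and combining with the second inclusion above produces
\[
   c \subseteq \nbd{gPg^{-1}}{A_1 + \abs{g}_{\Set{S}}}.
\]
Thus $gPg^{-1}$ is quasiconvex with constant $\kappa := A_1(\abs{g}_{\Set{S}}) + \abs{g}_{\Set{S}}$, which depends on $g$ but is allowed to do so since $g$ is fixed in the statement.

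There is essentially no obstacle: the substantive content is carried by Theorem~\ref{thm:PeripheralQuasiconvex}, and the remainder is a routine application of left-invariance of the word metric. The only point worth emphasizing is that one must pass through the coset $gP$ as an intermediary, since $gPg^{-1}$ is not itself a left coset and Theorem~\ref{thm:PeripheralQuasiconvex} applies only to (neighborhoods of) the peripheral subgroup $P$ in its standard position.
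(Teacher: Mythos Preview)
Your proof is correct and follows essentially the same approach as the paper: bound the Hausdorff distance between $gPg^{-1}$ and the coset $gP$ by $\abs{g}_{\Set{S}}$, invoke Theorem~\ref{thm:PeripheralQuasiconvex} (translated by $g$) to trap the geodesic in a neighborhood of $gP$, and then pass back to $gPg^{-1}$. The paper's write-up is slightly terser, absorbing your explicit translate-and-translate-back step into the phrase ``translating the result of Theorem~\ref{thm:PeripheralQuasiconvex} by $g$,'' but the content is identical.
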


\begin{proof}
Translating the result of Theorem~\ref{thm:PeripheralQuasiconvex}
by $g$, we see that neighborhoods of $gP$ are quasiconvex.
Now the Hausdorff distance between $gP$ and $gPg^{-1}$ is bounded above
by $A_0:=\abs{g}_{\Set{S}}$.
Thus any geodesic $c$ with endpoints in $gPg^{-1}$ lies in the
$A_1$--neighborhood of $gP$, for $A_1 = A_1(A_0)$.
Hence $c$ lies in the $(A_1 + A_0)$--neighborhood of $gPg^{-1}$.
\end{proof}

Quasiconvexity of peripheral subgroups
(and their conjugates) has the following
consequences using well-known results of Short \cite{Short91}.
We remark that finite generation of peripheral subgroups
was first proved by Osin \cite{Osin06} prior to the work of \Drutu--Sapir.

\begin{cor}[Osin, \Drutu--Sapir]\label{cor:PeripheralUndistorted}
If $(G,\PP)$ is relatively hyperbolic with a finite generating set,
then for each $P \in \PP$ and $g \in G$,
the conjugate $gPg^{-1}$ has a finite generating set $\Set{T}$.
Furthermore each conjugate $gPg^{-1}$ is undistorted in the sense that
the inclusion $gPg^{-1} \inclusion G$ induces a quasi-isometric embedding
\[
   \Cayley(gPg^{-1},\Set{T}) \to \Cayley(G,\Set{S}).
\]
\end{cor}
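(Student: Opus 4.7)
The plan is to reduce both statements to a standard Short-type argument, which only requires quasiconvexity of the subgroup in the Cayley graph (not hyperbolicity of the ambient group).

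First, I would invoke Corollary~\ref{cor:PeripheralConjugateQC} to fix a quasiconvexity constant $\kappa$ such that every geodesic in $\Cayley(G,\Set{S})$ with endpoints in $Q := gPg^{-1}$ lies in the $\kappa$-neighborhood of $Q$. Then I would define
\[
   \Set{T} := \bigset{h \in Q}{0 < \abs{h}_{\Set{S}} \le 2\kappa + 1}.
\]
Since $\Set{S}$ is finite, the ball of radius $2\kappa+1$ in $\Cayley(G,\Set{S})$ is finite, so $\Set{T}$ is finite. The key claim, which is the crux of the proof, is that $\Set{T}$ generates $Q$ and that $\abs{h}_{\Set{T}} \le \abs{h}_{\Set{S}}$ for every $h \in Q$.

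To prove the claim, I would take an arbitrary $h \in Q$ and a geodesic $c = (v_0,v_1,\dots,v_n)$ from $1$ to $h$ in $\Cayley(G,\Set{S})$. Quasiconvexity of $Q$ provides elements $h_0,h_1,\dots,h_n \in Q$ with $h_0 = 1$, $h_n = h$, and $\abs{v_i^{-1} h_i}_{\Set{S}} \le \kappa$ for each $i$. The triangle inequality then yields $\abs{h_{i-1}^{-1} h_i}_{\Set{S}} \le 2\kappa + 1$, so each $h_{i-1}^{-1} h_i \in \Set{T} \cup \{1\}$. This exhibits $h$ as a product of at most $n = \abs{h}_{\Set{S}}$ elements of $\Set{T}$, establishing both that $\Set{T}$ generates $Q$ and that $\abs{h}_{\Set{T}} \le \abs{h}_{\Set{S}}$. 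The reverse inequality $\abs{h}_{\Set{S}} \le \bigl( \max_{t \in \Set{T}} \abs{t}_{\Set{S}} \bigr) \cdot \abs{h}_{\Set{T}}$ is immediate, giving the quasi-isometric embedding $\Cayley(Q,\Set{T}) \to \Cayley(G,\Set{S})$.

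I do not expect any genuine obstacle here; the only point worth noting is that although Short's original result is phrased for subgroups of word hyperbolic groups, his proof uses only the quasiconvexity of the subgroup in the ambient Cayley graph and the local finiteness provided by a finite generating set, both of which are available in our setting.
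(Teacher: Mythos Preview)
Your proposal is correct and is precisely the argument the paper has in mind: the paper does not spell out a proof but simply attributes the corollary to ``well-known results of Short \cite{Short91}'', and what you have written is exactly Short's argument. Your observation that Short's proof uses only quasiconvexity and local finiteness of the Cayley graph, not hyperbolicity of the ambient group, is the one point that needs to be made, and you have made it.
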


The next result states that a (sufficiently large) saturation
of a geodesic is quasiconvex.  Furthermore,
neighborhoods of saturations are also
quasiconvex.

\begin{thm}[Thm.~1.12(4), \cite{DrutuSapirTreeGraded}]
\label{thm:DSMorse}
Let $(G,\PP)$ be a relatively hyperbolic group
with finite generating set $\Set{S}$.
For each $\epsilon>0$,
there is a constant $M=M(\epsilon)$ such that the following holds.
Let $c$ be an $\epsilon$--quasigeodesic in $\Cayley(G,\Set{S})$
and let $\hat{c}$ be a geodesic
in $\Cayley(G,\Set{S} \cup \Set{P})$ with the same endpoints as $c$.
If $\tilde{c}$ is any lift of $\hat{c}$
then $\tilde{c}$ lies in the $M$--neighborhood of the $M$--saturation
of $c$ with respect to the metric $d_{\Set{S}}$.
\end{thm}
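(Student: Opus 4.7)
The plan is to lift the data to a proper $\delta$--hyperbolic cusped space $X$ on which $(G,\PP)$ acts geometrically finitely, apply the Morse lemma in $X$, and translate the bounds back to $\Cayley(G,\Set{S})$ using the undistortedness of peripheral subgroups from Corollary~\ref{cor:PeripheralUndistorted}. Choose a truncated space $X-U$ with basepoint $x_0 \in X-U$, arranging the $G$--equivariant horoballs of $U$ to be pairwise $\rho$--separated by more than both $r(\delta)$ (the constant from Lemma~\ref{lem:LiftIsQuasigeodesic}) and the Morse constant $D_1$ that will appear below. Proposition~\ref{prop:ElectricQIRelCayley} then supplies a quasi-isometry $f\colon \Cayley(G,\Set{S}\cup\Set{P}) \to (X,\hat\rho)$ sending $g\mapsto gx_0$ and carrying geodesic edge paths to efficient semipolygonal relative paths in $(X,U)$.

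Under $f$, the geodesic $\hat c$ corresponds to an efficient semipolygonal relative path $(\gamma,\alpha)$, and Lemma~\ref{lem:LiftIsQuasigeodesic} promotes $\gamma$ to an $\epsilon_0$--quasigeodesic in $(X,\rho)$. Meanwhile, the orbit map sends the $\epsilon$--quasigeodesic $c$ to an $\epsilon_1$--quasigeodesic $\sigma$ in $(X,\rho)$ lying entirely in $X-U$. Since $\sigma$ and $\gamma$ share endpoints in $Gx_0$, the Morse lemma in the $\delta$--hyperbolic space $X$ yields a uniform Hausdorff $\rho$--distance bound $D_1$ between them.

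Next I would analyse $\tilde c$ piece by piece. A $\Set{P}$--edge of $\hat c$ between elements $h_1,h_2\in gP$ corresponds to an excursion $\alpha_i\subseteq\gamma$ into a horoball $B=B_{gP}$, with endpoints $h_j x_0 \in \partial B$. Each $h_j x_0$ lies within $\rho$--distance $D_1$ of some $v_j x_0$ with $v_j\in c$. By our wide-separation choice of $U$, any $\rho$--path of length at most $D_1$ from $v_j x_0\in X-U$ to $B$ cannot pass through any horoball other than $B$; following such a path at level~$0$ (projecting down from $B$ at its endpoint) yields, via the quasi-isometry between $\Cayley(G,\Set{S})$ and the path-component of $X-U$ containing $x_0$ produced in Proposition~\ref{prop:EfromovichDisconnected}, a word of uniformly bounded $\Set{S}$--length connecting $v_j$ to $gP$. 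Hence $d_{\Set{S}}(c,gP)$ is bounded by a uniform constant, so $gP$ lies in the $M$--saturation of $c$ for $M$ large enough. Corollary~\ref{cor:PeripheralConjugateQC} then confines the replacement $\Set{S}$--geodesic from $h_1$ to $h_2$ in $\tilde c$ to a uniform $d_{\Set{S}}$--neighbourhood of $gP$. The $\Set{S}$--edges of $\hat c$ are unchanged in $\tilde c$ and lie in the level-$0$ part of $\gamma$, so the same translation argument applied pointwise places them within uniform $d_{\Set{S}}$--distance of $c$. Taking $M$ larger than all constants involved yields the claim.

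The main obstacle is precisely this translation from $\rho$--closeness in $X$ to $d_{\Set{S}}$--closeness in $G$, which is not automatic: a deep excursion into an irrelevant horoball $B'\ne B$ provides an exponential $\rho$--shortcut that makes nearby $X$--points potentially far apart in $\Cayley(G,\Set{S})$. The remedy is to choose $U$ with horoballs separated by more than $D_1$, which is compatible with Lemma~\ref{lem:LiftIsQuasigeodesic} (only requiring separation $\ge r(\delta)$) since we merely increase separation. Coordinating this choice with $D_1$ — itself depending on $\epsilon$ through the quasi-isometry constants produced by Proposition~\ref{prop:ElectricQIRelCayley} and Lemma~\ref{lem:LiftIsQuasigeodesic} — is the delicate bookkeeping of the argument, and is what justifies setting $M$ at the end rather than \emph{a priori}.
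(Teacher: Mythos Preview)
First, note that the paper does not prove this theorem: it is quoted verbatim from \Drutu--Sapir \cite{DrutuSapirTreeGraded} and used as a black box. So there is no argument in the paper to compare yours against; \Drutu--Sapir's own proof proceeds via asymptotic cones and tree-graded spaces, a very different route from what you sketch.

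Your argument has a genuine gap at the step where you assert that the orbit map sends the $\epsilon$--quasigeodesic $c$ to an $\epsilon_1$--quasigeodesic $\sigma$ in $(X,\rho)$. This is false in general. The orbit map $g\mapsto gx_0$ from $\Cayley(G,\Set{S})$ into $(X,\rho)$ is Lipschitz but is \emph{not} a quasi-isometric embedding: the peripheral subgroups are exponentially distorted in the cusped space, precisely because of the horoballs. Concretely, if $c$ runs along a peripheral coset $gP$ for an $\Set{S}$--length of order $n$, then its image $\sigma$ has $\rho$--length of order $n$ while its endpoints are at $\rho$--distance of order $\log n$ (via the horoball over $gP$). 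So $\sigma$ is not a quasigeodesic in $(X,\rho)$, and the Morse lemma gives you no Hausdorff bound between $\sigma$ and $\gamma$. What is true is that the orbit map is a quasi-isometry onto $X-U$ with its \emph{induced path metric} $\bar\rho$, but quasigeodesics for $\bar\rho$ are not quasigeodesics for $\rho$, and the Morse lemma you invoke lives in $(X,\rho)$.

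This is not a technicality you can patch by widening the horoball separation: the distortion of peripherals in $X$ is intrinsic to the cusped geometry and is exactly what the $M$--saturation in the conclusion is there to absorb. Indeed, the phenomenon you are trying to rule out (long segments of $c$ that fellow-travel a single coset $gP$ while the corresponding piece of $\hat c$ is a single $\Set{P}$--edge) is the generic situation, not an edge case. The circular dependence you flag between $D_1$ and the choice of $U$ is a symptom of the same underlying problem rather than mere bookkeeping.
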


\begin{thm}[Theorem~4.1, \cite{DrutuSapirTreeGraded}]
\label{thm:Isolated}
Suppose $(G,\PP)$
is relatively hyperbolic with finite generating set $\Set{S}$.
For each $M<\infty$ there is a constant $\iota=\iota(M)<\infty$
so that for any two peripheral cosets $gP \ne g'P'$ we have
\[
   \diam \bigl( \nbd{gP}{M} \cap \nbd{g'P'}{M} \bigr) < \iota
\]
with respect to the metric $d_{\Set{S}}$.
\end{thm}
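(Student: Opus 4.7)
The plan is to use the Bounded Coset Penetration property (BCP) of Farb's definition (RH-5). By Theorem~\ref{thm:RelHypEquivalent}, since $(G,\PP)$ is relatively hyperbolic and $\Set{S}$ is a finite generating set for $G$ (hence in particular a finite relative generating set), the triple $(G,\PP,\Set{S})$ satisfies BCP with some constant function $\lambda \mapsto a(\lambda)$. Let $\hat{\Gamma} = \hat{\Gamma}(G,\PP,\Set{S})$ denote the coned-off Cayley graph, with cone vertex $v(gP)$ for each peripheral coset $gP$.

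Fix distinct cosets $gP \ne g'P'$ and two points $x,y$ in $\nbd{gP}{M} \cap \nbd{g'P'}{M}$. I would construct two short paths $\gamma,\gamma'$ in $\hat{\Gamma}$ joining $x$ to $y$: for $\gamma$, choose $p,p' \in gP$ with $d_{\Set{S}}(x,p), d_{\Set{S}}(y,p') \le M$, and concatenate an $\Set{S}$-geodesic from $x$ to $p$, the cone-edges $[p,v(gP)]$ and $[v(gP),p']$, and an $\Set{S}$-geodesic from $p'$ to $y$. The resulting path has $\hat{\Gamma}$-length at most $2M+1$, penetrates the single coset $gP$, and does not backtrack. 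Construct $\gamma'$ analogously through $v(g'P')$. Both $\gamma$ and $\gamma'$ share their endpoints, and (after parameterization by arc length) can be taken to be $(\lambda_0,0)$-quasigeodesics for a constant $\lambda_0=\lambda_0(M)$.

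Applying BCP to the pair $(\gamma,\gamma')$: since $\gamma$ penetrates $gP$ but $\gamma'$ does not, the entering and exiting vertices $p,p'$ of $\gamma$ in $gP$ satisfy $d_{\Set{S}}(p,p') \le a(\lambda_0)$. The triangle inequality then gives
\[
   d_{\Set{S}}(x,y) \le d_{\Set{S}}(x,p) + d_{\Set{S}}(p,p') + d_{\Set{S}}(p',y) \le 2M + a(\lambda_0),
\]
so any $\iota > 2M + a(\lambda_0)$ suffices as the desired uniform bound.

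The main technical obstacle is verifying that the constructed paths of $\hat{\Gamma}$-length at most $2M+1$ are $(\lambda_0,0)$-quasigeodesics with a $\lambda_0$ independent of the cosets and of $x,y$. Off the cone vertex the paths are concatenations of $\Set{S}$-geodesics, so the only subtlety concerns subsegments crossing $v(gP)$, whose two endpoints may lie in the same coset and hence be at $\hat{\Gamma}$-distance as small as $1$; since every subsegment has arc length bounded by $2M+1$, the multiplicative constant $\lambda_0$ is uniformly bounded (say by $2M+1$). The degenerate case $p=p'$, in which the path is forced to self-intersect, is handled separately by the direct bound $d_{\Set{S}}(x,y) \le 2M$.
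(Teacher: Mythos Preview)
The paper does not prove this theorem; it is quoted without proof from \Drutu--Sapir, whose original argument proceeds via asymptotic cones and the tree-graded structure of the Cayley graph. Your BCP-based approach is a genuinely different and more elementary route, and the overall strategy---build two short coned-off paths through the two cone points and apply clause~(1) of BCP---is correct and standard.

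There is, however, a gap in your verification that $\gamma$ and $\gamma'$ are $(\lambda_0,0)$--quasigeodesics in $\hat\Gamma$. You write that ``off the cone vertex the paths are concatenations of $\Set{S}$--geodesics,'' but an $\Set{S}$--geodesic need not be a $\hat\Gamma$--geodesic: two of its vertices may lie in a common peripheral coset and hence sit at $\hat\Gamma$--distance~$1$ while at $\Set{S}$--distance up to~$M$. More seriously, the path can self-intersect in cases other than $p=p'$: if the $\Set{S}$--geodesic from $x$ to $p$ and the one from $p'$ to $y$ share a vertex $z$, then $\gamma$ is not a $(\lambda,0)$--quasigeodesic for any~$\lambda$. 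The repair is the same as your $p=p'$ case: such a $z$ immediately gives
\[
   d_{\Set{S}}(x,y) \le d_{\Set{S}}(x,z)+d_{\Set{S}}(z,y) \le 2M.
\]
Once one assumes the edge-path is injective on vertices, any two distinct vertices on it are at $\hat\Gamma$--distance at least $1/2$ (group element to group element is $\ge 1$, group element to cone point is $\ge 1/2$), and since the path has at most $2M+2$ edges the quasigeodesic constant is indeed bounded in terms of~$M$. Alternatively, you can bypass the whole issue by observing that any path of length at most $L$ is automatically a $(1,L)$--quasigeodesic and invoking BCP in the form stated for $(\lambda,C)$--quasigeodesics (as in Osin's treatment), which is how this argument is usually run.
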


The preceding result has the following stronger form that restricts
the possible interactions
between two distinct peripheral cosets in a saturation of
a quasigeodesic.

\begin{prop}[Lemma~8.11, \cite{DrutuSapirTreeGraded}]
\label{prop:TwoPeripherals}
Let $(G,\PP)$
be a relatively hyperbolic group with finite generating set $\Set{S}$.
For each choice of positive constants
$\epsilon$, $\nu$, and $\tau$, there is a constant
$\eta_0 = \eta_0(\epsilon,\nu,\tau)$ such that the following holds.
Let $c$ be an $\epsilon$--quasigeodesic in $\Cayley(G,\Set{S})$.
Suppose $gP$ and $g'P'$ are distinct peripheral cosets
in $\Sat_\nu(c)$.
Then
\[
   \nbd{gP}{\tau} \cap \nbd{g'P'}{\tau} \subseteq \nbd{c}{\eta_0}
\]
with respect to the metric $d_{\Set{S}}$.
\end{prop}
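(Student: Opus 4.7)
The plan is first to reduce the statement to an existence claim. If the set $A := \nbd{gP}{\tau} \cap \nbd{g'P'}{\tau}$ is empty, the inclusion is vacuous. Otherwise, Theorem~\ref{thm:Isolated} applied with constant $\tau$ gives $\diam_{\Set{S}}(A) \leq \iota(\tau)$, so it suffices to produce a single witness $z \in A$ with $d_{\Set{S}}(z, c)$ bounded by a constant depending only on $\epsilon, \nu, \tau$: the triangle inequality then propagates the bound to every $x \in A$ with an additive loss of $\iota(\tau)$.

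To produce such a witness, I would pick any $x \in A$ together with companions $p \in gP$, $p' \in g'P'$ at $d_{\Set{S}}$-distance at most $\tau$ from $x$, and invoke the hypothesis $gP, g'P' \in \Sat_\nu(c)$ to choose $p_c \in gP$, $p'_c \in g'P'$ at distance at most $\nu$ from respective nearest points $q, q' \in c$. Passing to the coned-off Cayley graph $\hat\Gamma = \Cayley(G, \Set{S} \cup \Set{P})$, the relative path
\[
   \omega\colon q \to p_c \to v(gP) \to p \to x \to p' \to v(g'P') \to p'_c \to q'
\]
has $\hat\Gamma$-length at most $2\nu + 2\tau + 2$; in particular $d_{\hat\Gamma}(q, q') \leq 2\nu + 2\tau + 2$.

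The central step is to compare $\omega$ with a coned-off geodesic from $q$ to $q'$ using Theorem~\ref{thm:DSMorse} together with Bounded Coset Penetration (available since $(G, \PP)$ satisfies (RH-5), so $\hat\Gamma$ is $\delta$-hyperbolic and satisfies BCP). Let $\hat{c}_0$ be a $\hat\Gamma$-geodesic from $q$ to $q'$ and $\tilde{c}_0$ a lift. By Theorem~\ref{thm:DSMorse} applied to the sub-quasigeodesic $c_0 := c[q,q']$, one has $\tilde{c}_0 \subseteq \nbd{\Sat_M(c_0)}{M}$ with $M = M(\epsilon)$, while $|\hat{c}_0|_{\hat\Gamma} \leq 2\nu + 2\tau + 2$. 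BCP applied at the cone point $v(gP)$ splits into two cases. If $\hat{c}_0$ penetrates $v(gP)$, then its exit vertex is $d_{\Set{S}}$-close to $p$ and lies on $\tilde{c}_0$; that vertex is either $M$-close to $c_0$ directly, or $M$-close to some coset $gQ \in \Sat_M(c_0)$ distinct from $gP$, in which case Theorem~\ref{thm:Isolated} bounds $\diam(\nbd{gP}{M} \cap \nbd{gQ}{M}) \leq \iota(M)$ while $gQ$ itself is $M$-close to $c_0$. If instead $\hat{c}_0$ avoids $v(gP)$, BCP forces the entering and exiting vertices of $\omega$ at $v(gP)$, namely $p_c$ and $p$, to be $d_{\Set{S}}$-close, so $p$ is close to $p_c$ and hence to $c$. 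Symmetric analysis at $v(g'P')$ controls $p'$. In every case $d_{\Set{S}}(p, c)$ is bounded by a constant depending only on $\epsilon, \nu, \tau$, and $d(x, c) \leq \tau + d(p, c)$ delivers the desired $\eta_0$.

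I expect the main technical obstacle to be verifying that $\omega$ (or a small modification of it) qualifies as a $(\lambda, 0)$-quasigeodesic without backtracking in $\hat\Gamma$, which is the hypothesis under which BCP is formulated. Since $|\omega|_{\hat\Gamma}$ is a priori bounded purely in terms of $\nu$ and $\tau$, any backtracking can be excised without increasing length, and the resulting simplified path is a quasigeodesic with constants depending only on $\nu, \tau$. A secondary concern is the edge case where $d_{\hat\Gamma}(q, q')$ is very small so that $\hat{c}_0$ cannot meaningfully penetrate cone points; but then $p_c$ and $p'_c$ are forced to be $\hat\Gamma$-close, Theorem~\ref{thm:PeripheralQuasiconvex} applied to a geodesic joining them stays close to one peripheral, and a direct $d_{\Set{S}}$-estimate yields the bound. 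The remaining work is the routine bookkeeping of combining the constants $M(\epsilon)$, $a(\lambda)$, and $\iota(\cdot)$ into the single bound $\eta_0(\epsilon, \nu, \tau)$.
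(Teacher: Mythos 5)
First, a point of comparison: the paper does not prove this proposition at all --- it is quoted from \Drutu--Sapir \cite{DrutuSapirTreeGraded} (their Lemma~8.11, established there by quite different, tree-graded/asymptotic-cone methods) --- so your BCP-based sketch has to stand on its own. It does not, because of a genuine gap at its central case. Your reduction to a single witness via Theorem~\ref{thm:Isolated} is fine, and the case in which the coned-off geodesic $\hat{c}_0$ does \emph{not} penetrate $v(gP)$ is handled correctly by BCP(1). But when $\hat{c}_0$ does penetrate $gP$, all that Theorem~\ref{thm:DSMorse} gives you is that the exit vertex $w$ (a vertex of the lift $\tilde{c}_0$) lies in $\nbd{\Sat_M(c_0)}{M}$. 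Since $gP$ is itself (in general) one of the cosets of $\Sat_M(c_0)$ --- it lies within $\nu$ of $c$, and nothing prevents it lying within $M$ of the subsegment $c_0$ --- and $w\in gP$, this containment says nothing about \emph{where along $gP$} the vertex $w$ sits, and in particular does not push $w$ toward $c_0$. Your dichotomy omits this subcase entirely, and the subcase you do treat ($w$ within $M$ of some $gQ\in\Sat_M(c_0)$ with $gQ\ne gP$) is a non sequitur: Theorem~\ref{thm:Isolated} bounds the diameter of $\nbd{gP}{M}\cap\nbd{gQ}{M}$, and $gQ$ meets $\nbd{c_0}{M}$ \emph{somewhere}, but that somewhere need not lie in the intersection of the two neighborhoods, so no bound on $d_{\Set{S}}(w,c_0)$ follows. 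The statement you would actually need --- that the portion of one saturating coset near another saturating coset is close to the quasigeodesic --- is precisely Proposition~\ref{prop:TwoPeripherals} for the pair of cosets in $\Sat_M(c_0)$, so at its key step the argument is circular.

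A secondary, also nontrivial, issue is the backtracking repair you defer to bookkeeping. Excising backtracking from $\omega$ (the short segments $[q,p_c]$, $[p,x]\cup[x,p']$, $[p'_c,q']$ may re-enter $gP$ or $g'P'$) changes the entering and exiting vertices, and BCP then speaks about the new exit vertex, which is close to $x$ only in the coned-off metric, not in $d_{\Set{S}}$ (the modified path may cross a cone point between $x$ and that vertex). So even the case you handle correctly requires a more careful construction of $\omega$ guaranteeing it is a quasigeodesic without backtracking whose entry/exit vertices in $gP$ and $g'P'$ are the intended points $p_c,p,p',p'_c$. As it stands, the proposal does not constitute a proof; if you want an argument independent of \cite{DrutuSapirTreeGraded}, the penetrated case needs a genuinely new idea (for instance, an analysis of how a relative geodesic between points of $c_0$ can enter and leave a peripheral coset, compared against the relative path obtained from $c_0$ itself), not just a combination of Theorems \ref{thm:DSMorse} and \ref{thm:Isolated}.
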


\begin{lem}
\label{lem:NearOnePeripheral}
Let $(G,\PP)$
be a relatively hyperbolic group with finite generating set $\Set{S}$.
Choose positive constants $\epsilon$, $\nu$, and~$\tau$.
Then there exists a constant
$\eta_1=\eta_1(\epsilon,\nu,\tau)$ such that the following holds
in the graph $\Cayley(G,\Set{S})$.
Let $b$ be an $\epsilon$--quasigeodesic,
and suppose $C$ is a connected subset of $\bignbd{\Sat_\nu(c)}{\tau}$.
If $C$ does not intersect $\nbd{c}{\eta_1}$, then
$C$ intersects $\nbd{gP}{\tau}$ for a unique peripheral coset
$gP \subseteq \Sat_\nu(c)$.
Furthermore, $C \subseteq \nbd{gP}{\tau}$. 
\end{lem}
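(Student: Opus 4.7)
The plan is to use Proposition~\ref{prop:TwoPeripherals} (the \Drutu--Sapir fact that two distinct peripheral cosets in a $\nu$--saturation of a quasigeodesic only meet each other close to the quasigeodesic) together with the connectedness of $C$ to show that the open sets $\nbd{gP}{\tau}$, for cosets $gP\subseteq\Sat_\nu(c)$, are pairwise disjoint once restricted to $C$, so that only one can meet $C$.

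First I would set
\[
   \eta_1 := \max \bigl( \tau, \eta_0(\epsilon,\nu,\tau) \bigr),
\]
where $\eta_0$ is the constant supplied by Proposition~\ref{prop:TwoPeripherals}. Since $\Sat_\nu(c) = c \cup \bigcup_{gP\subseteq\Sat_\nu(c)} gP$, we have
\[
   \bignbd{\Sat_\nu(c)}{\tau} \;=\; \nbd{c}{\tau} \;\cup\; \bigcup_{gP\subseteq\Sat_\nu(c)} \nbd{gP}{\tau}.
\]
Because $\eta_1 \geq \tau$, the hypothesis that $C$ is disjoint from $\nbd{c}{\eta_1}$ forces $C$ to be disjoint from $\nbd{c}{\tau}$; hence $C$ is contained in the union of the open sets $\nbd{gP}{\tau}$ over $gP\subseteq\Sat_\nu(c)$.

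Next I would verify that these open sets, when intersected with $C$, are pairwise disjoint. If $gP$ and $g'P'$ are distinct peripheral cosets in $\Sat_\nu(c)$, then by Proposition~\ref{prop:TwoPeripherals}
\[
   \nbd{gP}{\tau} \cap \nbd{g'P'}{\tau} \;\subseteq\; \nbd{c}{\eta_0} \;\subseteq\; \nbd{c}{\eta_1},
\]
and the latter is disjoint from $C$ by hypothesis. Thus the sets $U_{gP} := C\cap\nbd{gP}{\tau}$ form a pairwise disjoint open cover of $C$ in its subspace topology.

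Finally I would apply connectedness: a connected space cannot be written as a disjoint union of nonempty open subsets, so at most one $U_{gP}$ is nonempty. This gives both the uniqueness of the coset $gP$ with $C\cap\nbd{gP}{\tau}\ne\emptyset$ and the inclusion $C\subseteq\nbd{gP}{\tau}$. I do not anticipate a serious obstacle; the argument is a direct packaging of Proposition~\ref{prop:TwoPeripherals}. The only delicate point is confirming that the hypothesis $C\cap\nbd{c}{\eta_1}=\emptyset$ simultaneously (i) removes the ``$c$--piece'' of the saturation's neighborhood so that $C$ is covered only by the peripheral neighborhoods, and (ii) rules out the overlaps between distinct peripheral neighborhoods, which is exactly why $\eta_1$ is taken to be the maximum of $\tau$ and $\eta_0$.
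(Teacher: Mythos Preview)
Your proposal is correct and follows essentially the same approach as the paper: decompose $\bignbd{\Sat_\nu(c)}{\tau}$ as $\nbd{c}{\tau}$ together with the peripheral neighborhoods, use Proposition~\ref{prop:TwoPeripherals} to rule out overlaps away from $\nbd{c}{\eta_0}$, and then invoke connectedness. If anything, your write-up is slightly more explicit than the paper's, since you spell out the choice $\eta_1=\max(\tau,\eta_0)$ and the open-cover-of-a-connected-set argument.
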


\begin{proof}
By hypothesis, $C$ is contained in the following union of open sets:
\[
   \nbd{c}{\tau} \cup
     \left( \bigcup \bigset{\nbd{gP}{\tau}}{gP\subseteq \Sat_\nu(c)} \right)
\]
Assume that $C$ does not intersect $\nbd{c}{\tau}$.
If $C$ intersects $\nbd{gP}{\tau} \cap \nbd{g'P'}{\tau}$
for distinct peripheral cosets $gP$ and $g'P'$ contained in
$\Sat_\nu(c)$, then Proposition~\ref{prop:TwoPeripherals} gives a constant
$\eta_0=\eta_0(\epsilon,\nu,\tau)$
such that $C$ intersects $\nbd{c}{\eta_0}$.
Otherwise, $C$ intersects $\nbd{c}{\tau}$ for a unique peripheral coset
$gP \subseteq \Sat_\nu(c)$, and thus $C \subseteq \nbd{gP}{\tau}$.
\end{proof}

\begin{lem}
\label{lem:cHatNearc}
Let $(G,\PP)$ be a relatively hyperbolic group with a finite generating set
$\Set{S}$. For each $\epsilon>0$
there is a constant $A_0=A_0(\epsilon)$ such that the following holds.
Let $c$ be an $\epsilon$--quasigeodesic segment in $\Cayley(G,\Set{S})$,
and let $\hat{c}$ be a geodesic segment in $\Cayley(G,\Set{S} \cup \Set{P})$.
Suppose $c$ and $\hat{c}$ have the same endpoints in $G$.
Then each vertex of $\hat{c}$ lies in the $A_0$--neighborhood of some vertex
of $c$ with respect to the metric $d_{\Set{S}}$.
\end{lem}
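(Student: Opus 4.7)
The plan is to lift $\hat{c}$ to a path $\tilde{c}$ in $\Cayley(G,\Set{S})$ by replacing each $\Set{P}$-edge of $\hat{c}$ with a geodesic in $\Cayley(G,\Set{S})$ between its endpoints. Theorem~\ref{thm:DSMorse} then gives a constant $M = M(\epsilon)$ such that $\tilde{c}$ lies in $\nbd{\Sat_M(c)}{M}$ with respect to $d_{\Set{S}}$, and every vertex of $\hat{c}$ is a vertex of $\tilde{c}$ and thus lies in this neighborhood. Applying Lemma~\ref{lem:NearOnePeripheral} with $\nu = \tau = M$ yields a constant $\eta_1 = \eta_1(\epsilon, M, M)$. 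If a vertex $v$ of $\hat{c}$ satisfies $d_{\Set{S}}(v, c) \le \eta_1 + 1$, we are done with $A_0 \ge \eta_1 + 1$. Otherwise, let $\sigma$ be the maximal connected sub-arc of $\tilde{c}$ containing $v$ that avoids $\nbd{c}{\eta_1}$; Lemma~\ref{lem:NearOnePeripheral} guarantees $\sigma \subseteq \nbd{gP}{M}$ for a unique peripheral coset $gP \subseteq \Sat_M(c)$, and by maximality the endpoints $p, q$ of $\sigma$ lie in $\nbd{c}{\eta_1 + 1} \cap \nbd{gP}{M}$.

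The central step is to bound the number of $\hat{c}$-vertices on $\sigma$ together with the $d_{\Set{S}}$-distances between them. Since $p$ and $q$ each reach $gP$ in at most $M$ $\Set{S}$-steps and any two points of $gP$ are joined by a single $\Set{P}$-edge, we have $d_{\Set{S}\cup\Set{P}}(p, q) \le 2M + 1$. Let $\hat{c}'$ be the smallest sub-path of $\hat{c}$ whose lift contains $\sigma$; its endpoints differ from $p$ and $q$ by at most one $\Set{P}$-edge each, so $\hat{c}'$ has length in $\Cayley(G,\Set{S}\cup\Set{P})$ bounded in terms of $M$ and thus involves only boundedly many edges. Because $\hat{c}$ is a geodesic, each peripheral coset is used at most once among its $\Set{P}$-edges, so at most one $\Set{P}$-edge of $\hat{c}'$ lies in $gP$ itself, while the $\Set{S}$-lifts of any other $\Set{P}$-edges of $\hat{c}'$ lie in intersections of the form $\nbd{g'P'}{A_1} \cap \nbd{gP}{M}$ with $g'P' \ne gP$, which have bounded diameter by Theorems \ref{thm:PeripheralQuasiconvex} and~\ref{thm:Isolated}.

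Consequently the $\hat{c}$-vertices on $\sigma$ split into at most two clusters separated by the (possibly long) $\Set{P}$-edge in $gP$, with consecutive vertices in each cluster at bounded $d_{\Set{S}}$-distance, and the extreme vertices of the two clusters at bounded $d_{\Set{S}}$-distance from $p$ and $q$. Since $p, q \in \nbd{c}{\eta_1 + 1}$, the vertex $v$ lies within a uniformly bounded $d_{\Set{S}}$-distance of $c$, and hence of some vertex of $c$. The main obstacle I foresee is handling the case where $\sigma$ straddles a long $\Set{S}$-lift of a $\Set{P}$-edge in $gP$: one must verify that each endpoint of this $\Set{P}$-edge in $\hat{c}$ is $d_{\Set{S}}$-close to the appropriate endpoint of $\sigma$, which in turn requires tracking the geometry of $\tilde{c}$ carefully near the boundaries of the lifted peripheral segments.
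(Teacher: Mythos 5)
Your proposal is, in essence, the paper's own proof: lift $\hat{c}$ to $\tilde{c}$, place $\tilde{c}$ in $\nbd{\Sat_M(c)}{M}$ via Theorem~\ref{thm:DSMorse}, use Lemma~\ref{lem:NearOnePeripheral} to trap any excursion of $\tilde{c}$ away from $\nbd{c}{\eta_1}$ inside a single coset neighborhood $\nbd{gP}{M}$, and then exploit the fact that the corresponding subpath of the relative geodesic $\hat{c}$ has boundedly many edges, at most one of which has both endpoints in $gP$, while every other edge contributes a bounded amount of $d_{\Set{S}}$--length by Theorems \ref{thm:PeripheralQuasiconvex} and~\ref{thm:Isolated}. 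The paper runs this as a contradiction on a window of fixed $\Set{S}$--length centered at $v$; your maximal excursion $\sigma$ is only a bookkeeping variant. One small repair: the endpoints of $\hat{c}'$ need not differ from $p$ and $q$ by a single $\Set{P}$--edge, since $p$ and $q$ may be interior vertices of a lifted $\Set{S}$--geodesic and need not lie in the corresponding coset; they do lie in its $A_1$--neighborhood by Theorem~\ref{thm:PeripheralQuasiconvex}, which bounds $d_{\Set{S}\cup\Set{P}}$ from $p$ or $q$ to the nearest vertex of $\hat{c}$ by $A_1+1$, and the bound on the number of edges of $\hat{c}'$ survives.

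More importantly, the ``obstacle'' you flag at the end should not be attacked as stated. If the lift of the unique edge $e_0$ with endpoints in $gP$ straddles, say, the $p$--end of $\sigma$, its endpoints need not be $d_{\Set{S}}$--close to $p$: the piece of that lift from $p$ to the first $\hat{c}$--vertex inside $\sigma$ lies near $gP$ on both counts, so Theorem~\ref{thm:Isolated} gives no bound there, and the claim you propose to verify is false in general. But it is also unnecessary. Since $e_0$ is the only edge of $\hat{c}'$ lying in $gP$, the vertex $v$ lies on one side of it; on that side, consecutive $\hat{c}$--vertices in $\sigma$, and the extreme such vertex together with the corresponding endpoint of $\sigma$, are separated by at most $\max\bigl(1,\iota(\max(A_1,M))\bigr)$, because each intervening edge is either an $\Set{S}$--edge or a $\Set{P}$--edge in a coset $g'P' \ne gP$ whose relevant lifted portion lies in $\nbd{g'P'}{A_1} \cap \nbd{gP}{M}$. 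Chaining through the boundedly many edges of $\hat{c}'$ places $v$ within bounded $d_{\Set{S}}$--distance of $q$ (or of $p$ in the symmetric case), hence of $c$. With that adjustment---which your two--cluster setup already contains---the argument is complete and agrees with the paper's.
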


\begin{proof}
Let $\tilde{c}$ be a lift of $\hat{c}$ to $\Cayley(G,\Set{S})$.
By Theorem~\ref{thm:DSMorse}, $\tilde{c}$ lies in the $M$--neighborhood
of the $M$--saturation of $c$ for some $M$ depending only on $\epsilon$ and
the generating set $\Set{S}$.
Let $\iota = \iota(M)$ be the constant given by Theorem~\ref{thm:Isolated}.
Let $v$ be a vertex of $\hat{c}$.
If $v$ is within an $\Set{S}$--distance $2\iota M$
of an endpoint of $\tilde{c}$, then we are done.
Otherwise let $\gamma$ be the subpath of $\tilde{c}$
of $\Set{S}$--length $4\iota M$ centered at $v$.
Since $\gamma$ is connected,
either $\gamma$ lies in the $M$--neighborhood of some coset
$gP \subseteq \Sat_M(c)$, or $\gamma$ intersects $\nbd{c}{\eta_1}$,
where $\eta_1 = \eta_1(1,M,M)$ is given by
Lemma~\ref{lem:NearOnePeripheral}.

We will show that
$\gamma$ intersects $\nbd{c}{\eta_1}$.
Assume by way of contradiction that
$\gamma \subseteq \nbd{gP}{M}$ for $gP \subseteq \Sat_M(c)$.
It follows that the endpoints $x$ and $y$ of $\gamma$ are connected
by a relative geodesic $\hat{\gamma}$ passing through $v$,
each of whose vertices lies within an $\Set{S}$--distance $M$
of $gP$.
Evidently $\hat{\gamma}$ contains at most $2M+1$ edges.
If an edge $e$ of $\hat{\gamma}$ does not have both endpoints in $gP$,
then its endpoints are separated by an $\Set{S}$--distance less than
$\iota$ by Theorem~\ref{thm:Isolated}.
At most one edge of $\hat{\gamma}$ has both endpoints in $gP$.
Without loss of generality, we may assume the the subpath of $\hat{\gamma}$
from $x$ to $v$ does not contain such an edge.
Since $d_{\Set{S} \cup \Set{P}} (x,v) \le 2M$, it follows that
$d_{\Set{S}} (x,v) < 2\iota M$, contradicting our choice of path $\gamma$.

Thus $\gamma$ intersects $\nbd{c'}{\eta_1}$ as desired.
It follows that $v$ is within an $\Set{S}$--distance $2\iota M + \eta_1$
of $c$, completing the proof.
\end{proof}

\begin{defn}
Let $c$ be a geodesic of $\Cayley(G,\Set{S})$,
and let $\epsilon,R$ be positive constants.
A point $x \in c$ is \emph{$(\epsilon,R)$--deep}
in a peripheral left coset $gP$ (with respect to $c$)
if $x$ is not within a distance $R$ of an endpoint of $c$ and
$\ball{x}{R} \cap c$ lies in $\nbd{gP}{\epsilon}$.
If $x$ is not $(\epsilon,R)$--deep in any peripheral left coset $gP$
then $x$ is an \emph{$(\epsilon,R)$--transition point} of $c$
\end{defn}

\begin{lem}\label{lem:DisjointDeepSegments}
Let $(G,\Set{S})$
be relatively hyperbolic with finite generating set $\Set{S}$.
For each $\epsilon$ there is a constant $R=R(\epsilon)$
such that the following holds.
Let $c$ be any geodesic of $\Cayley(G,\Set{S})$,
and let $\bar{c}$ be a connected component of the set of all
$(\epsilon,R)$--deep points of $c$.
Then there is a peripheral left coset $gP$ such that each $x \in c$
is $(\epsilon,R)$--deep in $gP$ and is not $(\epsilon,R)$--deep in any other
peripheral left coset.
\end{lem}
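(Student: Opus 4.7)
The plan is to choose $R$ sufficiently large in terms of the isolation constant $\iota=\iota(\epsilon)$ from Theorem~\ref{thm:Isolated} so that two things hold: (i) each $(\epsilon,R)$--deep point of $c$ lies deep in a \emph{unique} peripheral coset, and (ii) the assignment of coset to point is locally constant along $c$, hence constant on each connected component $\bar{c}$. Throughout, I treat $c$ as a geodesic parametrized by arc length, so that the $d_{\Set{S}}$--distance between two points on $c$ equals their arc-length separation.

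For (i), suppose $x\in c$ were $(\epsilon,R)$--deep in two distinct peripheral cosets $gP$ and $g'P'$. Then $\ball{x}{R}\cap c$ would lie in $\nbd{gP}{\epsilon}\cap\nbd{g'P'}{\epsilon}$, whose $d_{\Set{S}}$--diameter is less than $\iota$ by Theorem~\ref{thm:Isolated}. On the other hand, since $c$ is a geodesic and $x$ is at $d_{\Set{S}}$--distance greater than $R$ from each endpoint, the set $\ball{x}{R}\cap c$ contains the open subpath of $c$ of arc-length $2R$ centered at $x$, and therefore contains pairs of points at $d_{\Set{S}}$--distance arbitrarily close to $2R$. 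Setting $R:=\iota+2$ produces a contradiction. Thus $\phi(x):=gP$ is a well-defined function on the set $D\subseteq c$ of $(\epsilon,R)$--deep points.

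For (ii), suppose $x,y\in D$ satisfy $d_{\Set{S}}(x,y)\le 1$ along $c$. Then
\[
   \ball{x}{R-1}\cap c \;\subseteq\; \ball{y}{R}\cap c \;\subseteq\; \nbd{\phi(y)}{\epsilon},
\]
and the same set is also contained in $\nbd{\phi(x)}{\epsilon}$. If $\phi(x)\neq\phi(y)$, Theorem~\ref{thm:Isolated} would bound the $d_{\Set{S}}$--diameter of $\ball{x}{R-1}\cap c$ by $\iota$, whereas this set contains pairs of points at $d_{\Set{S}}$--distance close to $2(R-1)=2\iota+2>\iota$. Hence $\phi(x)=\phi(y)$, so $\phi$ is locally constant on $D$. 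Since $c$ is homeomorphic to an interval, each connected component $\bar{c}$ of $D$ is itself an interval, and iterating local constancy in unit steps along $\bar{c}$ shows that $\phi$ is constant on $\bar{c}$. Letting $gP$ be that common value yields the conclusion of the lemma.

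There is no genuine obstacle here: both parts of the argument are the same application of Theorem~\ref{thm:Isolated}, and the only bookkeeping is in the choice of $R$ so that $2R$ and $2(R-1)$ both exceed $\iota(\epsilon)$. The choice $R:=\iota(\epsilon)+2$ works in both places simultaneously.
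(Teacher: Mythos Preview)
Your proof is correct and follows essentially the same approach as the paper's: both arguments rest entirely on Theorem~\ref{thm:Isolated}, using that two distinct peripheral cosets have $\epsilon$--neighborhoods whose intersection has diameter less than $\iota(\epsilon)$, which forces any sufficiently long subarc of $c$ to be deep in at most one coset. The paper is slightly more economical---it takes $R:=\iota(\epsilon)$ and proves in one stroke that deep points in distinct cosets are at distance at least $R$, which simultaneously gives uniqueness and constancy on components---whereas you separate uniqueness and local constancy and pay a small premium in the constant ($R=\iota+2$); but the content is the same.
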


\begin{proof}
Let $gP$ and $g'P'$ be distinct left cosets of peripheral subgroups.
Then $\nbd{gP}{\epsilon} \cap \nbd{g'P'}{\epsilon}$.
has diameter less than $R$, where $R:=\iota(\epsilon)$
is the constant given by Theorem~\ref{thm:Isolated}.

Let $c_{gP} := c \cap \nbd{gP}{\epsilon}$. (Note that this set might not be
connected.)
Deleting the points that are within a distance $R$ of an endpoint
of $c_{gP}$ gives precisely the set of points of $c$ that are
$(\epsilon,R)$--deep in $gP$.
Evidently the intersection $c_{gP} \cap c_{g'P'}$ has diameter at most
$R$ since it lies in $\nbd{gP}{\epsilon} \cap \nbd{g'P'}{\epsilon}$.
It is clear that if $x$ is $(\epsilon,R)$--deep in $gP$
and $y$ is $(\epsilon,R)$--deep in $g'P'$ then $d(x,y) \ge R$.
In particular, within each component $\bar{c}$ of the set of
$(\epsilon,R)$--deep points,
every point is $(\epsilon,R)$--deep in a unique peripheral left coset $gP$
that depends only on the choice of component $\bar{c}$.
\end{proof}

If $\bar{c}$ is a component of the $(\epsilon,R)$--deep points as above,
we say that $\bar{c}$ is \emph{$(\epsilon,R)$--deep in $gP$},
where $gP$ is the unique 
peripheral left coset associated to every point of $\bar{c}$.

\begin{lem}\label{lem:ShortPeripheralGeodesic}
For each $\epsilon,L>0$, there is a constant $\eta=\eta(\epsilon,L)$
so that the following holds.
Suppose $c$ is a geodesic in $\Cayley(G,\Set{S})$ that lies in
$\nbd{gP}{\epsilon}$ for some left coset of a peripheral subgroup $P \in \PP$.
Suppose $\hat{c}$ is a geodesic in $\Cayley(G,\Set{S} \cup \Set{P})$
such that each point of $c$ is within an $\Set{S}$--distance $L$
of some vertex of $\hat{c}$.
Then $c$ has length at most $\eta$.
\end{lem}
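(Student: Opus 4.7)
The plan is to exploit that each peripheral coset $gP$ forms a clique in $\Cayley(G,\Set{S}\cup\Set{P})$: any two distinct elements of $gP$ differ by a nontrivial element of $P\subseteq \Set{P}$ and so are joined by a single $\Set{P}$-edge. It follows at once that any two points of the $\Set{S}$-neighborhood $\nbd{gP}{\epsilon+L}$ are at $\Set{S}\cup\Set{P}$-distance at most $2(\epsilon+L)+1$, by walking to $gP$ via two $\Set{S}$-paths of length at most $\epsilon+L$ and crossing with a single $\Set{P}$-edge.

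Given this, I would define a map $\phi$ from the vertices of $c$ to the vertices of $\hat{c}$ by selecting, for each vertex $w$ of $c$, some vertex $\phi(w)$ of $\hat{c}$ within $\Set{S}$-distance $L$ of $w$, as guaranteed by hypothesis. Since $c\subseteq \nbd{gP}{\epsilon}$, each image $\phi(w)$ lies in $\nbd{gP}{\epsilon+L}$. Therefore any two vertices in $\Image(\phi)$ are at $\Set{S}\cup\Set{P}$-distance at most $2(\epsilon+L)+1$. Because they all lie on the geodesic $\hat{c}$, their indices along $\hat{c}$ span an interval of integer length at most $2(\epsilon+L)+1$, giving $\abs{\Image(\phi)} \le N := 2(\epsilon+L)+2$.

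To finish, I would apply a pigeonhole argument. Each fiber $\phi^{-1}(v)$ has $\Set{S}$-diameter at most $2L$ by the triangle inequality. Since $c$ is an $\Set{S}$-geodesic, such a fiber is contained in a subsegment of $c$ of length at most $2L$, and thus contains at most $2L+1$ vertices of $c$. Summing over fibers produces at most $N(2L+1)$ vertices on $c$, so $c$ has length at most
\[
   \eta(\epsilon,L) := \bigl( 2(\epsilon+L)+2 \bigr) (2L+1) - 1.
\]
The argument is entirely elementary once the clique structure of peripheral cosets in $\Cayley(G,\Set{S}\cup\Set{P})$ is noted; I do not expect any serious obstacles and no deeper \Drutu--Sapir machinery is required here.
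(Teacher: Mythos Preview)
Your proof is correct and follows essentially the same approach as the paper's: both exploit the clique structure of $gP$ in $\Cayley(G,\Set{S}\cup\Set{P})$ to bound the number of relevant vertices of $\hat{c}$ by $2(\epsilon+L)+2$, and then use that each such vertex accounts for a subsegment of $c$ of $\Set{S}$-diameter at most $2L$. The only cosmetic difference is that you organize the covering via the fibers of a map $\phi$ from vertices of $c$ to vertices of $\hat{c}$, while the paper covers $c$ directly by the sets $\nbd{v_i}{L}\cap c$ as $v_i$ ranges over the vertices of (a truncation of) $\hat{c}$.
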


\begin{proof}
Let $v_0,\dots,v_n$ be the vertices of $\hat{c}$.
Without loss of generality we can assume that $v_0$ and $v_n$
both lie within an $\Set{S}$--distance $L$ of $c$.
(If not, then we first shorten $\hat{c}$ so that this condition holds.)
Let $c_i \subseteq c$ be the set $\nbd{v_i}{L} \cap c$.
Then $c$ is covered by the sets $c_0,\dots,c_n$, each of which has diameter
at most $2L$.

In order to bound the length of $c$, it suffices to bound the integer $n$
in terms of $\epsilon$ and $L$.
But $v_0$ and $v_n$ are each within an $\Set{S}$--distance $\epsilon + L$
of a vertex of $gP$.
Thus $d_{\Set{S} \cup \Set{P}} (v_0,v_n) \le 2(\epsilon + L) + 1$.
In particular, $n$ is at most $2(\epsilon +L) + 1$,
completing the proof.
\end{proof}

\begin{defn}
Let $c$ be a geodesic in a space $X$.
If $c_0$ is any subset of $c$, then the \emph{hull} of $c_0$ in $c$,
denoted $\Hull_c(c_0)$ is the smallest connected subspace of $c$
containing $c_0$.
\end{defn}

\begin{prop}
\label{prop:VerticesNearTransitions}
Let $(G,\PP)$ be relatively hyperbolic with a finite generating set $\Set{S}$.
There exist constants $\epsilon$, $R$, and $L$ such that the following holds.
Let $c$ be any geodesic of $\Cayley(G,\Set{S})$ with endpoints in $G$,
and let $\hat{c}$ be a geodesic of $\Cayley(G,\Set{S} \cup \Set{P})$
with the same endpoints as $c$.
Then in the metric $d_{\Set{S}}$, the set of vertices of $\hat{c}$
is at a Hausdorff distance at most $L$ from the set of
$(\epsilon,R)$--transition points of $c$.
Furthermore, the constants $\epsilon$ and $R$ satisfy the conclusion
of Lemma~\ref{lem:DisjointDeepSegments}
\end{prop}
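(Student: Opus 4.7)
The plan is to use Lemma~\ref{lem:cHatNearc} to pair each vertex $v_i$ of $\hat{c}$ with a vertex $w_i$ of $c$ satisfying $d_{\Set{S}}(v_i,w_i)\le A_0$, where $A_0=A_0(1)$, arranged so that $w_0,w_n$ are the endpoints of $c$. Set $\epsilon=A_1(A_0)$ via Theorem~\ref{thm:PeripheralQuasiconvex} and $R=R(\epsilon)$ via Lemma~\ref{lem:DisjointDeepSegments}. The key structural step is to analyze each subsegment $c_i\subseteq c$ from $w_{i-1}$ to $w_i$: if $e_i=v_{i-1}v_i$ is an $\Set{S}$-edge then $c_i$ has length at most $2A_0+1$, while if $e_i$ is a $\Set{P}$-edge in coset $g_iP_i$ then $v_{i-1},v_i\in g_iP_i$ and $w_{i-1},w_i\in\nbd{g_iP_i}{A_0}$, so Theorem~\ref{thm:PeripheralQuasiconvex} gives $c_i\subseteq\nbd{g_iP_i}{\epsilon}$. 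A sign-counting argument shows the unordered intervals $c_i$ cover $c$; combining this with Theorem~\ref{thm:Isolated}, each basic interval (connected component of $c\setminus\{w_i\}$) is either short, of length at most $\max(2A_0+1,\iota(\epsilon))$, or contained in $\nbd{gP}{\epsilon}$ for a single coset $gP$.

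For the direction ``transition points close to vertices of $\hat{c}$,'' I would argue by contradiction: if an $(\epsilon,R)$-transition point $p$ satisfies $d_{\Set{S}}(p,\{w_i\})>R$, then $\ball{p}{R}\cap c$ lies inside a single basic interval, which cannot be short (since the diameter of $\ball{p}{R}\cap c$ exceeds the short bound once $R$ is chosen large enough) and cannot lie in $\nbd{gP}{\epsilon}$ (since $p$ is a transition point). Hence some $w_i$ lies within $R$ of $p$, giving $d_{\Set{S}}(p,\{v_i\})\le R+A_0=:L$.

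For the other direction, endpoints $v_0,v_n$ are themselves transition points, so consider an interior $v_i$ with $w_i$ deep in a coset $gP$ (Lemma~\ref{lem:DisjointDeepSegments}). Peripheral quasiconvexity implies that $c\cap\nbd{gP}{\epsilon}$ is a single connected interval of $c$ containing the deep component $\bar{c}$. Since $\hat{c}$ is a geodesic in $\Cayley(G,\Set{S}\cup\Set{P})$, the clique structure of $gP$ forces $\hat{c}$ to visit $gP$ in at most two consecutive vertices, so there is at most one $\Set{P}$-edge $e_k$ of $\hat{c}$ in $gP$. For any vertex $v_j$ of $\hat{c}$ with $w_j\in\bar{c}$, the containment $v_j\in\nbd{gP}{A_0+\epsilon}$ together with $d_{\Set{S}\cup\Set{P}}$-geodesity of $\hat{c}$ forces $|j-k|$ to be bounded in terms of $A_0$ and $\epsilon$. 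Tracing the short subpath of $\hat{c}$ from $v_k$ (or $v_{k-1}$) to $v_j$, each intermediate $\Set{S}$-edge changes position on $c$ by at most $2A_0+1$, and each intermediate $\Set{P}$-edge in a coset $g'P'\ne gP$ has its $c$-segment meeting $\nbd{gP}{\epsilon}$ in a set of diameter at most $\iota(\epsilon)$ by Theorem~\ref{thm:Isolated}. Hence $w_j$ lies within a uniformly bounded $d_{\Set{S}}$-distance of $w_k$ (or $w_{k-1}$), and so within bounded distance of the boundary of $\bar{c}$, which consists of transition points.

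The main obstacle is the direction ``vertices close to transition points'': the naive approach via Lemma~\ref{lem:ShortPeripheralGeodesic} fails because the peripheral segment $c_k$ can be arbitrarily long, so $|\bar{c}|$ need not be bounded. The crucial input is therefore the uniqueness of $\Set{P}$-edges per coset in a $\Cayley(G,\Set{S}\cup\Set{P})$-geodesic, which forces any vertex $v_j$ of $\hat{c}$ whose companion $w_j$ lies inside $\bar{c}$ to be close in $\hat{c}$-index to the single edge $e_k$; this index bound, combined with Theorem~\ref{thm:Isolated}, controls the $d_{\Set{S}}$-displacement of $w_j$ from the boundary of $\bar{c}$ even when $\bar{c}$ itself is long.
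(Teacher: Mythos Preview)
Your approach is essentially the paper's: the same constants $\epsilon=A_1(A_0)$ and $R=R(\epsilon)$, the same decomposition of $c$ into subsegments indexed by edges of $\hat c$ via Lemma~\ref{lem:cHatNearc} and Theorem~\ref{thm:PeripheralQuasiconvex}, and the same use of the uniqueness of the $gP$-edge in a relative geodesic for the harder direction. The first direction (transition points lie near vertices of $\hat c$) is fine.

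There is a genuine gap in the second direction. You trace from $v_k$ (an endpoint of the unique $gP$-edge) toward $v_j=v_i$ and conclude that $w_i$ is within bounded $d_{\Set S}$-distance of $w_k$, ``and so within bounded distance of the boundary of $\bar c$.'' Neither step is justified. First, your appeal to Theorem~\ref{thm:Isolated} for a $\Set P$-edge $e_m$ in $g'P'\ne gP$ bounds only $\diam\bigl(c_m\cap\nbd{gP}{\epsilon}\bigr)$, not $|c_m|$; if some intermediate $w_m$ happens to lie outside $\bar c$, the segment $c_m$ can be long and the telescoping sum does not give $d_{\Set S}(w_k,w_i)$ bounded. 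Second, even granting that bound, you give no reason why $w_k$ itself is near the boundary of $\bar c$: the point $w_k$ lies in $\nbd{gP}{A_0}$ but could sit in the middle of $\bar c$, and proving it does not is exactly the case $i=k$ of what you are trying to show. The paper repairs this by tracing in the opposite direction: from $v_i$, walk along $\hat c$ \emph{away} from the $gP$-edge until reaching the first edge $e_\ell$ whose $c$-segment meets an endpoint of $\bar c$. Your index bound still controls $\ell$. The key difference is that now every intermediate $c_{e_m}$ lies inside $\bar c$; hence if $e_m$ is a $\Set P$-edge in $g'P'\ne gP$, then $c_{e_m}$ contains no $(\epsilon,R)$-deep points of $g'P'$ by Lemma~\ref{lem:DisjointDeepSegments}, which forces $|c_{e_m}|<2R$. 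Summing these lengths bounds the $d_{\Set S}$-distance from $w_i$ to an endpoint of $\bar c$, which is a transition point.
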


\begin{proof}
Let $A_0=A_0(1)$ be the constant given by Lemma~\ref{lem:cHatNearc}.
Then each vertex of $\hat{c}$ is within a distance $A_0$ of $c$.
For each vertex $v$ of $\hat{c}$, let
\[
   c_v := \Hull_c \bigl( c \cap \ball{v}{A_0} \bigr).
\]
For each edge $e$ of $\hat{c}$ with endpoints $v$ and $w$, let
\[
   c_e := \Hull_c (c_v \cup c_w).
\]
Then $c$ is covered by the sets $c_e$ for all edges $e$ of $\hat{c}$.

If $e$ is labelled by an edge of $\Set{S}$, then $c_e$ has length
at most $2A_0+1$.
Thus every point of $c_e$ lies within a distance $2A_0+1$
of $c \cap \ball{v}{A_0}$, and hence lies within a distance $3A_0 +1$ of $v$,
where $v$ is a vertex incident to $e$.

On the other hand, if $e$ is labelled by an edge of $\Set{P}$
then the endpoints of $e$ lie in the same left coset $gP$ of some peripheral 
subgroup.
Thus $c \cap \ball{v}{A_0}$ lies in the $A_0$--neighborhood of $gP$
for each endpoint $v$ of $e$.
As each point of $c_e$ lies between two such points,
it follows that $c_e$ lies in the $A_1$--neighborhood of $gP$
where $A_1= A_1(A_0)$ is the constant given by
Proposition~\ref{thm:PeripheralQuasiconvex}.
Let $R:=R(A_1)$ be the constant given by Lemma~\ref{lem:DisjointDeepSegments}.
The points of $c_e$ within a distance $R$
of the endpoints of $c_e$
are also within a distance $A_0 + R$ of the vertices of $\hat{c}$.
All other points of $c_e$ must be $(A_1,R)$--deep points of $c$.
In particular, we have shown that
the $(A_1,R)$--transition points of $c$ each
lie within a distance $3A_0 +R + 1$ of the set of vertices of $\hat{c}$.

In order to complete the proof, we need to bound the distance from an
arbitrary vertex of $\hat{c}$ to the set of $(A_1,R)$--transition points
of $c$.
Choose a vertex $v \in \hat{c}$. If $c_v$ contains an $(A_1,R)$--transition
point, then we are done, since $c_v$ has length at most $2A_0$
and intersects $\ball{v}{A_0}$.
It suffices to assume that $c_v$ is contained in some
$(A_1,R)$--deep component $\bar{c}$ of $c$ that is deep in a 
peripheral left coset $gP$.
By Lemma~\ref{lem:DisjointDeepSegments}, each endpoint of $\bar{c}$ is
an $(A_1,R)$--transition point of $c$,
so we need only bound the distance from $c_v$ to an endpoint of $\hat{c}$.

Since $\hat{c}$ is a geodesic of $\Cayley(G,\Set{S} \cup \Set{P})$,
it contains at most one edge $e_0$
whose endpoints both lie in the coset $gP$.
Choose an edge path
$e_1,\dots,e_k$ in $\hat{c}$ such that $v$ is the initial vertex of $e_1$,
such that for $i=1\dots,k-1$ we have
$c_{e_i} \subseteq \bar{c}$, and such that
$c_k$ contains an endpoint of $\bar{c}$.
Since $\bar{c}$ has two endpoints, we can also choose $e_1,\dots,e_k$
so that none of the edges is equal to $e_0$ (if there is such an edge $e_0$).

Let $w$ denote the common endpoint of $e_{k-1}$ and $e_k$.
Both $v$ and $w$ lie within a $d_{\Set{S}}$--distance
$A_0 + A_1$ of $gP$, so $d_{\Set{S} \cup \Set{P}}(v,w) < 2A_0 + 2A_1 + 1$.
Therefore $k-1\le 2A_0 + 2A_1 + 1$.
Suppose $i = 1,\dots,k-1$.
If $e_i$ is labeled by an element of $\Set{S}$ then, as noted above,
$c_{e_i}$ has length at most $2A_0 + 1$.
On the other hand, if $e_i$ is labelled by an element of $\Set{P}$,
then the endpoints of $e_i$ lie in a peripheral left coset $g'P'\ne gP$.
Since $c_{e_i} \subseteq \bar{c}$ cannot contain any points that are
$(A_1,R)$--deep in $g'P'$,
it follows that $c_{e_i}$ has length less than $2R$.
Thus we have an upper bound on the $d_{\Set{S}}$--distance from $v$ to $w$.

Now consider the last edge $e_k$ of our edge path, which contains
an endpoint of $\bar{c}$.
If $e_k$ is labelled by an edge of $\Set{S}$, then we are done, since
the $d_{\Set{S}}$--distance from $w$ to this endpoint is bounded above by
the length of $e_k$, which is at most $2A_0 + 1$.
If $e_k$ is labelled by an edge of $\Set{P}$, then $\bar{c}$ can
intersect $c_{e_k}$ in a subsegment of length at most $R$,
so we see that $w$ is within a $d_{\Set{S}}$--distance $R$ of an endpoint
of $\bar{c}$,
completing the proof.
\end{proof}

It is worth noting the following corollary, which could also be derived
directly from the work of \Drutu--Sapir \cite{DrutuSapirTreeGraded}.
The corollary deals with only the geometry of the finite generating set
$\Set{S}$.

\begin{cor}
Let $(G,\PP)$
be a relatively hyperbolic group with finite generating set $\Set{S}$.
Then there exist constants $\epsilon,R,M$ such that $\epsilon$ and $R$
satisfy the conclusion of Lemma~\ref{lem:DisjointDeepSegments}
and such that the following holds.
Let $c$ and $c'$ be two geodesics in $\Cayley(G,\Set{S})$
with the same endpoints in $G$.
Then the set of $(\epsilon,R)$--transition points of $c$
and the set of $(\epsilon,R)$--transition points of $c'$
are at a Hausdorff distance at most $M$.
\qed
\end{cor}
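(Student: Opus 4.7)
The plan is to derive this corollary directly from Proposition~\ref{prop:VerticesNearTransitions} via a triangle inequality argument for the Hausdorff distance. Let $\epsilon$, $R$, and $L$ be the constants furnished by Proposition~\ref{prop:VerticesNearTransitions}; by construction $\epsilon$ and $R$ already satisfy the conclusion of Lemma~\ref{lem:DisjointDeepSegments}. The key observation is that the relative geodesic in $\Cayley(G,\Set{S} \cup \Set{P})$ is determined (up to choice) by the endpoints alone, not by the particular geodesic $c$ in $\Cayley(G,\Set{S})$ realizing those endpoints.

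Concretely, I would proceed as follows. Given two geodesics $c$ and $c'$ of $\Cayley(G,\Set{S})$ sharing endpoints in $G$, fix a single geodesic $\hat{c}$ in $\Cayley(G,\Set{S} \cup \Set{P})$ joining those same endpoints. Applying Proposition~\ref{prop:VerticesNearTransitions} to the pair $(c,\hat{c})$ gives that the set $T$ of $(\epsilon,R)$--transition points of $c$ lies at Hausdorff $d_{\Set{S}}$--distance at most $L$ from the vertex set $V$ of $\hat{c}$. Applying the same proposition to the pair $(c',\hat{c})$ gives that the set $T'$ of $(\epsilon,R)$--transition points of $c'$ is also within Hausdorff distance $L$ of $V$. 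The triangle inequality for Hausdorff distance then yields
\[
   d_{\mathrm{Haus}}(T,T') \le d_{\mathrm{Haus}}(T,V) + d_{\mathrm{Haus}}(V,T') \le 2L,
\]
so taking $M := 2L$ completes the proof.

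There is essentially no obstacle: the only subtlety is to notice that Proposition~\ref{prop:VerticesNearTransitions} is stated asymmetrically, bounding the Hausdorff distance between transition points of a single $\Set{S}$--geodesic and the vertices of an arbitrary coned-off geodesic with the same endpoints. The corollary follows simply because $\hat{c}$ depends only on the endpoints, so it serves as a common reference object for both $c$ and $c'$. No new geometric input beyond the proposition is needed, which justifies the author's remark that the result could also be extracted directly from the machinery of \Drutu--Sapir \cite{DrutuSapirTreeGraded}.
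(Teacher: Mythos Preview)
Your proposal is correct and is precisely the intended argument: the corollary carries a \qed\ with no proof because it follows immediately from Proposition~\ref{prop:VerticesNearTransitions} by choosing a single relative geodesic $\hat{c}$ and applying the triangle inequality for Hausdorff distance, exactly as you describe.
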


We also recover the following result due to Osin
\cite[Proposition~3.15]{Osin06}.
We remark that if $\Set{S}$ is a relative generating set rather than
a generating set, the result still holds and can be derived as a
corollary of Osin \cite[Proposition~3.2]{Osin07_Peripheral}

\begin{cor}
Let $(G,\PP)$
be a relatively hyperbolic group with finite generating set $\Set{S}$.
Then there exists a constant $N$ such that if $c$ and $c'$ are two
geodesics in $\Cayley(G,\Set{S} \cup \Set{P})$
with the same endpoints in $G$, then the set of vertices
of $c$ and the set of vertices of $c'$ are within a Hausdorff distance $N$
in the metric $d_{\Set{S}}$. \qed
\end{cor}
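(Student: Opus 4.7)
The plan is to reduce this to Proposition~\ref{prop:VerticesNearTransitions} by choosing a single auxiliary geodesic in the unconed Cayley graph. Since the common endpoints of $c$ and $c'$ lie in $G$, we may pick a geodesic $b$ in $\Cayley(G,\Set{S})$ joining these two endpoints. Let $\epsilon, R, L$ be the constants produced by Proposition~\ref{prop:VerticesNearTransitions}, and let $T$ denote the set of $(\epsilon,R)$--transition points of $b$.

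First I would apply Proposition~\ref{prop:VerticesNearTransitions} to the pair $(b,c)$: its conclusion says that the set of vertices of $c$ lies within $d_{\Set{S}}$--Hausdorff distance $L$ of $T$. Next I would apply the same proposition to the pair $(b,c')$, using the same $b$, to conclude that the set of vertices of $c'$ also lies within $d_{\Set{S}}$--Hausdorff distance $L$ of $T$. The triangle inequality for Hausdorff distance then yields that the vertex sets of $c$ and $c'$ are within $d_{\Set{S}}$--Hausdorff distance $2L$ of one another, so taking $N := 2L$ gives the desired uniform constant.

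There is essentially no obstacle: the only thing to verify is that Proposition~\ref{prop:VerticesNearTransitions} is applicable to each of $c$ and $c'$ with the \emph{same} choice of $b$, which is automatic because $b$ is a single geodesic in $\Cayley(G,\Set{S})$ with the same endpoints as both $c$ and $c'$, and the constants $\epsilon,R,L$ in that proposition depend only on $(G,\PP,\Set{S})$, not on the chosen geodesic in $\Cayley(G,\Set{S} \cup \Set{P})$. Thus the auxiliary set $T$ of transition points of $b$ serves as a common yardstick against which both vertex sets are compared, and the result follows at once.
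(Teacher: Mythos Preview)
Your proposal is correct and is exactly the argument the paper intends: the corollary is stated with a \qed\ because it follows immediately from Proposition~\ref{prop:VerticesNearTransitions} by comparing both relative geodesics to the transition points of a single $\Cayley(G,\Set{S})$--geodesic with the same endpoints, just as you describe.
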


Finally, we deduce the following corollary
describing relatively quasiconvex subgroups of $G$
using only the geometry of $\Cayley(G,\Set{S})$.

\begin{cor}
\label{cor:RelQCinCayleyGraph}
Let $(G,\PP)$ be relatively hyperbolic with finite generating set $\Set{S}$.
There are $\epsilon,R$ satisfying the conclusion of
Lemma~\ref{lem:DisjointDeepSegments} such that the following holds.
Let $H$ be a subgroup of $G$.
Then $H$ is relatively quasiconvex if and only if
there is a constant $\kappa$
such that for each geodesic $c$ in $\Cayley(G,\Set{S})$
joining points of $H$,
the set of\/ $(\epsilon,R)$--transition points of $c$
lies in the $\kappa$--neighborhood of $H$
\textup{(}with respect to the metric $d_{\Set{S}}$\textup{)}.
\end{cor}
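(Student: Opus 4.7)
The plan is to deduce the corollary directly from the characterization (QC-5) together with Proposition \ref{prop:VerticesNearTransitions}. Since $G$ is finitely generated by $\Set{S}$, the word metric $d_{\Set{S}}$ is a proper left invariant metric on $G$, so it is a legitimate choice for the proper left invariant metric appearing in (QC-5). Thus $H$ is relatively quasiconvex if and only if there is a constant $\kappa_1$ such that for every geodesic $\hat{c}$ in $\Cayley(G, \Set{S}\cup\Set{P})$ with endpoints in $H$, every vertex of $\hat{c}$ lies within $d_{\Set{S}}$-distance $\kappa_1$ of $H$.

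Next, I fix constants $\epsilon, R, L$ as provided by Proposition \ref{prop:VerticesNearTransitions}; by construction these $\epsilon, R$ satisfy the conclusion of Lemma \ref{lem:DisjointDeepSegments}, as required. Given a geodesic $c$ in $\Cayley(G,\Set{S})$ with endpoints in $H$, I choose any geodesic $\hat{c}$ in $\Cayley(G, \Set{S}\cup\Set{P})$ with the same endpoints; this is possible because $\Set{S}$ generates $G$ in the traditional sense, so both endpoints of $c$ lie in the single connected component of $\Cayley(G,\Set{S}\cup\Set{P})$. Proposition \ref{prop:VerticesNearTransitions} asserts that in the metric $d_{\Set{S}}$, the set of $(\epsilon, R)$-transition points of $c$ and the set of vertices of $\hat{c}$ are at Hausdorff distance at most $L$.

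The equivalence now follows by the triangle inequality applied in both directions. If (QC-5) holds with constant $\kappa_1$, each $(\epsilon,R)$-transition point of $c$ lies within $d_{\Set{S}}$-distance $L$ of some vertex of $\hat{c}$, which in turn lies within $\kappa_1$ of $H$, so the transition condition holds with $\kappa := \kappa_1 + L$. Conversely, starting from an arbitrary geodesic $\hat{c}$ in $\Cayley(G,\Set{S}\cup\Set{P})$ with endpoints in $H$, I pick a geodesic $c$ in $\Cayley(G,\Set{S})$ with the same endpoints; each vertex of $\hat{c}$ is within $L$ of a $(\epsilon,R)$-transition point of $c$, which by hypothesis is within $\kappa$ of $H$, so (QC-5) holds with $\kappa_1 := \kappa + L$.

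There is no real obstacle here: all the combinatorial work has already been done in Proposition \ref{prop:VerticesNearTransitions}, and the corollary amounts to repackaging that Hausdorff bound between vertices of coned-off geodesics and transition points of ordinary geodesics together with the (QC-5) criterion. The only point requiring any care is confirming that $d_{\Set{S}}$ is an admissible choice of proper left invariant metric in (QC-5), which follows immediately from the standing assumption that $\Set{S}$ is a finite generating set for $G$.
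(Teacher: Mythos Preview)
Your proof is correct and follows exactly the approach of the paper: reduce to (QC-5) with the word metric $d_{\Set{S}}$ as the proper left-invariant metric, and then invoke Proposition~\ref{prop:VerticesNearTransitions} together with the triangle inequality. The paper's own proof is a two-line sketch of precisely this argument, and you have simply filled in the details.
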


\begin{proof}
A subgroup $H\le G$ is relatively
quasiconvex if and only if it satisfies (QC-5)
with respect to the generating set $\Set{S}$ and the proper
left invariant metric $d_{\Set{S}}$.
The corollary now follows immediately from
Proposition~\ref{prop:VerticesNearTransitions}.
\end{proof}

\section{Applications of relative quasiconvexity}
\label{sec:applications}

The present section is a collection of various basic properties
of relatively quasiconvex subgroups.  In particular, we prove
Theorems \ref{thm:QCBasicProps} and~\ref{thm:UndistortedStatement}
and Corollary~\ref{cor:GeomFiniteManifolds}.
We also briefly examine strongly relatively quasiconvex subgroups,
which were introduced by Osin in \cite{Osin06}.

The following theorem states that relatively quasiconvex subgroups
are relatively hyperbolic, with the obvious peripheral structure.

\begin{thm}[Relatively quasiconvex $\Longrightarrow$ relatively hyperbolic]
\label{thm:RelQCisRelHyp}
Let $(G,\PP)$ be relatively hyperbolic, and let $H \le G$ be a
relatively quasiconvex subgroup.
Consider the following collection of subgroups of $H$\textup{:}
\[
   \bar{\mathbb{O}} :=  \set{H \cap gPg^{-1}}{%
       \text{$g \in G$, $P \in \PP$, and $H \cap gPg^{-1}$ is infinite}}.
\] 
Then the elements of\/ $\bar{\mathbb{O}}$ lie in only finitely many
conjugacy classes in $H$.
Furthermore, if\/ $\mathbb{O}$ is a set of representatives of these conjugacy
classes then $(H,\mathbb{O})$ is relatively hyperbolic.
\end{thm}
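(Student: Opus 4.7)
The plan is to exploit the dynamical characterization of relative quasiconvexity provided by Definition (QC-1). Fix any compact, metrizable space $M$ on which $(G,\PP)$ acts as a geometrically finite convergence group. By (QC-1), $H$ acts on $\Lambda H \subseteq M$ as a geometrically finite convergence group, and so by Corollary~\ref{cor:RH12} the pair $(H,\mathbb{O}_H)$ is relatively hyperbolic, where $\mathbb{O}_H$ is a set of representatives of the conjugacy classes of maximal parabolic subgroups of $H$ acting on $\Lambda H$. The main work is then to identify $\mathbb{O}_H$ with a set of representatives of the conjugacy classes in $\bar{\mathbb{O}}$.

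I first dispose of two degenerate cases. If $H$ is finite, then no subgroup of $H$ is infinite, so $\bar{\mathbb{O}}=\emptyset$ and $(H,\emptyset)$ is relatively hyperbolic by (RH-1) using the convergence action on $\emptyset$. If $H$ is infinite and contains no loxodromic element of $G$, then $H$ is a parabolic subgroup of $G$'s action, so $H$ fixes a unique parabolic point $p\in M$ and $H\le\Stab_G(p)=gPg^{-1}$ for some $g\in G$ and $P\in\PP$. In this case $\Lambda H=\{p\}$, the collection $\bar{\mathbb{O}}$ reduces to $\{H\}$, and $(H,\{H\})$ is relatively hyperbolic via the action on a single point.

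In the substantive case, when $\Lambda H$ contains at least two points, I claim that a subgroup $Q\le H$ is a maximal parabolic subgroup of $H$'s action on $\Lambda H$ if and only if $Q=H\cap gPg^{-1}$ for some $g\in G$ and $P\in\PP$ with $Q$ infinite. For the forward direction, let $Q=\Stab_H(p)$ for a parabolic point $p\in\Lambda H$ of $H$'s action; then $Q$ is infinite and contains no element loxodromic in $H$'s action. Any loxodromic element of $H$ fixes two points of $\Lambda H\subseteq M$ and so is loxodromic in $G$'s action on $M$, and conversely; hence $Q$ is an infinite subgroup of $G$ containing no loxodromic of $G$. By the standard convergence group dichotomy, such a subgroup is parabolic in $G$, fixing a unique point, which must be $p$. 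Therefore $p$ is a parabolic point of $G$'s action, $\Stab_G(p)=gPg^{-1}$ for some $g\in G$, $P\in\PP$, and by maximality $Q=H\cap gPg^{-1}$. Conversely, any infinite intersection $H\cap gPg^{-1}$ fixes the parabolic point $p$ of $gPg^{-1}$, which therefore lies in $\Lambda H$ and is a parabolic point of $H$'s action with $H$-stabilizer precisely $H\cap gPg^{-1}$. The finiteness of the number of $H$-conjugacy classes in $\bar{\mathbb{O}}$ is then immediate from the geometric finiteness of $H$'s action on $\Lambda H$, which supplies finitely many $H$-orbits of parabolic points; any set $\mathbb{O}$ of conjugacy representatives satisfies the theorem.

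The main obstacle is the identification step in the previous paragraph, and specifically ruling out the possibility that a parabolic point of $H$'s action on $\Lambda H$ is a conical (i.e., non-parabolic) limit point of $G$'s action on $M$. The critical input is the classical dichotomy of convergence group theory, which forces every infinite non-loxodromic subgroup of $G$ to be a parabolic subgroup of $G$'s action with a unique fixed point. Once this is invoked, the algebraic description of $\bar{\mathbb{O}}$ via intersections $H\cap gPg^{-1}$ matches the dynamical description of maximal parabolic subgroups of $H$, and the conclusion follows by quoting the equivalence (RH-1)$\Longleftrightarrow$(RH-2).
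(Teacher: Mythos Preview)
Your proof is correct and follows essentially the same dynamical approach as the paper: both use (QC-1) to get a geometrically finite action of $H$ on $\Lambda H$, identify the maximal parabolic subgroups of this action with the infinite intersections $H\cap gPg^{-1}$, and then invoke the finiteness of conjugacy classes of maximal parabolics for geometrically finite convergence groups (Tukia). The only stylistic difference is that the paper handles the identification of parabolic points via Tukia's dichotomy (a point cannot be simultaneously conical and parabolic), whereas you argue directly from the definition of parabolic subgroup and the uniqueness of its fixed point; these are equivalent maneuvers. One small cleanup: you do not actually need Corollary~\ref{cor:RH12} or the equivalence (RH-1)$\Longleftrightarrow$(RH-2) --- (RH-1) alone already gives relative hyperbolicity of $(H,\mathbb{O})$ once you have the geometrically finite action.
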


The peripheral structure $\mathbb{O}$ constructed above is referred to as an
\emph{induced peripheral structure} on $H$ coming from $(G,\PP)$.
Note that the only ambiguity in the construction of $\mathbb{O}$
is the choice of representatives of the conjugacy classes from
$\bar{\mathbb{O}}$.

\begin{proof}
We give a dynamical
proof based on geometrically finite convergence group actions.
Let $(G,\PP)$ act geometrically finitely on a compactum $M$.
Since $H$ is (QC-1), the induced action of $H$ on
$\Lambda(H) \subseteq M$ is geometrically finite.

By a result of Tukia, a point cannot be both a conical limit point
and a parabolic point for the same convergence action
\cite[Theorem~3A]{Tukia98}.
Every conical limit point for $H$ acting on $\Lambda(H) \subseteq M$
is a conical limit point for $H$ acting on $M$, and hence also
a conical limit point for $G$ acting on $M$.
On the other hand, every parabolic point for $H$ acting on $\Lambda(H)$
is clearly a parabolic point for $G$ acting on $M$.
Thus the parabolic points of $H$ are precisely the parabolic points of $G$
that lie in $\Lambda(H)$.

Consequently the set $\bar{\mathbb{O}}$ is the set of 
maximal parabolic subgroups for the action of $H$ on $\Lambda(H)$.
Tukia has shown that a geometrically finite group action has
only finitely many conjugacy classes of maximal parabolic subgroups
\cite[Theorem~1B]{Tukia98}.
Thus $(H,\mathbb{O})$ satisfies (RH-1).
\end{proof}

A group $G$ is \emph{slender} if every subgroup $H \le G$ is finitely generated.
The following corollary is an easy consequence of the preceding theorem.

\begin{cor}
Let $(G,\PP)$ be relatively hyperbolic.
The following conditions are equivalent:
\begin{enumerate}
\item \label{item:slender} Every $P \in \PP$ is slender.
\item \label{item:QCFG} Every relatively quasiconvex subgroup
$H$ of $(G,\PP)$ is finitely generated.
\end{enumerate}
\end{cor}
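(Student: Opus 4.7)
The plan is to prove the two implications separately, using definition \textup{(QC-1)} for (\ref{item:QCFG})~$\Longrightarrow$~(\ref{item:slender}), and Theorem~\ref{thm:RelQCisRelHyp} together with Theorem~\ref{thm:RelHypEquivalent} for the reverse direction. The underlying idea is that relative quasiconvexity is liberal enough to include every subgroup of a peripheral (via the elementary cases of the convergence action), so finite generation of all relatively quasiconvex subgroups immediately forces slenderness of the peripherals; conversely, slenderness combined with finite relative generation (which is automatic for any relatively hyperbolic pair by Theorem~\ref{thm:RelHypEquivalent}) assembles a finite generating set for any relatively quasiconvex subgroup.

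For (\ref{item:QCFG})~$\Longrightarrow$~(\ref{item:slender}), I would first show that every subgroup $K\le P$ of every $P\in\PP$ is relatively quasiconvex. Fix a compactum $M$ on which $(G,\PP)$ acts as a geometrically finite convergence group and verify (QC-1) for $K$. If $K$ is finite then $\Lambda K$ is empty, and by the convention from Section~\ref{subsec:GeomFinite} the action of a finite group on $\emptyset$ is a geometrically finite convergence action. Otherwise $K$ is infinite; since $P$ contains no loxodromic element, neither does $K$, so $K$ is parabolic with $\Lambda K$ equal to the single parabolic fixed point of $P$, and by the same convention the action of the countable group $K$ on a one-point space is a geometrically finite convergence action. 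Hence $K$ is relatively quasiconvex, and (\ref{item:QCFG}) forces $K$ to be finitely generated. As $K\le P$ was arbitrary, $P$ is slender.

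For (\ref{item:slender})~$\Longrightarrow$~(\ref{item:QCFG}), let $H\le G$ be relatively quasiconvex and apply Theorem~\ref{thm:RelQCisRelHyp} to obtain a finite induced peripheral structure $\mathbb{O}$, whose elements are of the form $O=H\cap gPg^{-1}$ with $P\in\PP$ and $g\in G$ and $O$ infinite. Each such $O$ is a subgroup of the conjugate $gPg^{-1}$ of a slender group and is therefore slender, and in particular finitely generated. By Theorem~\ref{thm:RelHypEquivalent}, the pair $(H,\mathbb{O})$ satisfies definition (RH-5), so $H$ admits a finite relative generating set $\Set{S}$ with respect to $\mathbb{O}$. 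Taking the union of $\Set{S}$ with a finite generating set for each of the finitely many $O\in\mathbb{O}$ produces a finite generating set for $H$.

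The only nonroutine point is the assertion that relative hyperbolicity of $(H,\mathbb{O})$ in any of the six equivalent senses forces $H$ to be finitely generated relative to $\mathbb{O}$: this is part of the hypothesis of (RH-5) and (RH-6) and is verified for (RH-4) in the sketch of (RH-4)~$\Longrightarrow$~(RH-5s) given earlier in the paper, so it transfers to all six formulations via Theorem~\ref{thm:RelHypEquivalent}. Once this fact is available, both directions of the corollary are essentially bookkeeping built on top of Theorem~\ref{thm:RelQCisRelHyp} and the observation that slenderness is inherited by subgroups and preserved under conjugation.
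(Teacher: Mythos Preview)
Your proof is correct and follows essentially the same approach as the paper's: for (\ref{item:QCFG})~$\Longrightarrow$~(\ref{item:slender}) you show every subgroup of a peripheral is relatively quasiconvex (the paper asserts this in one line, while you helpfully verify it via the elementary cases of (QC-1)), and for (\ref{item:slender})~$\Longrightarrow$~(\ref{item:QCFG}) you combine Theorem~\ref{thm:RelQCisRelHyp} with finite relative generation to assemble a finite generating set, exactly as the paper does. Your explicit attention to the fact that finite relative generation is part of (RH-5)/(RH-6) and hence holds in all six models is a nice touch that the paper leaves implicit.
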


\begin{proof}
(\ref{item:slender}) $\Longrightarrow$~(\ref{item:QCFG}):
If $H$ is relatively quasiconvex, then it is relatively hyperbolic
with respect to subgroups of the $P \in \PP$.
If each such $P$ is slender, then $H$ is relatively hyperbolic with respect to
finitely generated groups.
In particular $H$ is finitely generated relative to finitely many
finitely generated subgroups.
Thus $H$ is finitely generated.

(\ref{item:QCFG}) $\Longrightarrow$~(\ref{item:slender}):
Every subgroup $H$ of a peripheral subgroup $P \in \PP$
is relatively quasiconvex.
\end{proof}

Since a relatively quasiconvex subgroup $H$ of a relatively hyperbolic group $G$
is itself relatively hyperbolic,
we could in principle consider nested sequences 
$H_0 \le H_1 \le \cdots \le H_\ell =G$
such that $H_i$ is relatively quasiconvex in $H_{i+1}$.
The following corollary shows that any subgroup $H_0$ produced in this manner
is relatively quasiconvex in the original group $G$.
Thus no new subgroups arise in this manner.

\begin{cor}[Nested relative quasiconvexity]
\label{cor:NestedQC}
Suppose $(G,\PP)$ is relatively hyperbolic, and $K \le G$ is relatively
quasiconvex in $(G,\PP)$ with induced peripheral structure $\mathbb{O}$.
Suppose $H \le K$.
Then $H$ is relatively quasiconvex in $(K,\mathbb{O})$
if and only if $H$ is relatively quasiconvex in $(G,\PP)$.
\end{cor}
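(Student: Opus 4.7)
The plan is to use the dynamical characterization (QC-1) throughout, since the relevant peripheral data for $(G,\PP)$ and $(K,\mathbb{O})$ are essentially encoded in the limit set dynamics on one and the same compactum. By Proposition~\ref{prop:QC12} this characterization is well-defined, and we are free to choose the compactum and the geometrically finite convergence action realizing each relative hyperbolicity structure.

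First, I would fix a geometrically finite convergence action of $(G,\PP)$ on a compact metrizable space $M$, which exists since $(G,\PP)$ is relatively hyperbolic. By Theorem~\ref{thm:RelQCisRelHyp}, the restriction of this action to $K$ is geometrically finite on the limit set $\Lambda K \subseteq M$, and the maximal parabolic subgroups of this action on $\Lambda K$ are (up to conjugacy in $K$) exactly the elements of $\mathbb{O}$. Thus the action of $(K,\mathbb{O})$ on $\Lambda K$ is a geometrically finite convergence action realizing the induced peripheral structure.

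The key observation is that the limit set of $H$ does not depend on whether we view $H$ as acting on $M$ or on $\Lambda K$: since $H \le K$, every accumulation point of an $H$-orbit in $M$ already lies in $\Lambda K$, so the limit set $\Lambda H \subseteq M$ equals the limit set of $H$ inside the compactum $\Lambda K$. Applying (QC-1) for $H \le G$ using $M$, and for $H \le K$ using $\Lambda K$, we get: $H$ is relatively quasiconvex in $(G,\PP)$ iff the action of $H$ on $\Lambda H$ is geometrically finite, iff $H$ is relatively quasiconvex in $(K,\mathbb{O})$.

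The main (minor) obstacle is bookkeeping the elementary cases and the matching of parabolics. One must check that a subgroup $P \le H$ is parabolic for the $H$-action on $\Lambda H$ (viewed as a subset of $\Lambda K$) exactly when it is parabolic for the $H$-action on $\Lambda H \subseteq M$; this is immediate once one notes that loxodromic elements and their fixed-point pairs are intrinsic to the group element and its action on $M$, and hence agree with those for the restricted action on $\Lambda K$. With this verification in place, the two instances of (QC-1) become literally the same statement, and the equivalence follows.
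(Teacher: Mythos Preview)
Your proposal is correct and follows essentially the same approach as the paper's proof: fix a geometrically finite convergence action of $(G,\PP)$ on a compactum $M$, use $\Lambda K$ as the compactum realizing $(K,\mathbb{O})$, observe that $\Lambda H$ is the same set whether computed in $M$ or in $\Lambda K$, and conclude that (QC-1) for $H$ in $(G,\PP)$ and in $(K,\mathbb{O})$ are literally the same condition. The paper's version is terser and omits the bookkeeping about parabolics that you spell out, but the argument is the same.
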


\begin{proof}
Let $(G,\PP)$ act geometrically finitely on a compactum $M$.
Then the limit set $\Lambda(H)$ for the action of $H$ on $M$ is the same as
the limit set for the restricted action of $H$ on $\Lambda(K)$.
The property of geometrical finiteness of the action of $H$ on $\Lambda(H)$
is intrinsic to $\Lambda(H)$.
Thus $H$ satisfies (QC-1) as a subgroup of $(G,\PP)$
if and only if $H$ satisfies (QC-1) as a subgroup of $(K,\mathbb{O})$.
\end{proof}

Using Lemma~\ref{lem:cHatNearc}, the proof of
Theorem~\ref{thm:UndistortedStatement} is straightforward.

\begin{proof}[Proof of Theorem~\ref{thm:UndistortedStatement}]
Since $\Cayley(H,\Set{T})$ is a geodesic space, it follows that each pair
of points in $H$ is connected in $\Cayley(G,\Set{S})$ by an
$\epsilon$--quasigeodesic
that lies in the $\epsilon$--neighborhood of $H$.
Let $c$ be such a quasigeodesic.
By Lemma~\ref{lem:cHatNearc}, if $\hat{c}$ is any geodesic
of $\Cayley(G,\Set{S} \cup \Set{P})$ with the same endpoints as $c$,
then each vertex $v$ of $\hat{c}$ lies within a $d_{\Set{S}}$--distance
$A_0$ of $c$, where $A_0=A_0(\epsilon)$.
Consequently, $v$ is within a $d_{\Set{S}}$--distance $A_0 + \epsilon$
of some point of $H$,
establishing that $H$ satisfies definition (QC-5) with respect to
the generating set $\Set{S}$ and the proper, left invariant metric
$d_{\Set{S}}$.
\end{proof}

The following result roughly states that an element of a group that lies close
to two cosets $xH$ and $yK$ also lies near the intersection
$xHx^{-1} \cap yKy^{-1}$.

\begin{prop}\label{prop:CloseCosets}
Let $G$ have a proper, left invariant metric $d$, and suppose $xH$ and $yK$
are arbitrary left cosets of subgroups of~$G$.
For each constant $L$ there is a constant $L'=L'(G,d,xH,yK)$
so that in the metric space $(G,d)$ we have
\[
   \nbd{x H}{L} \cap \nbd{y K}{L} \subseteq
     \nbd{x H x^{-1} \cap y K y^{-1}}{L'}.
\]
\end{prop}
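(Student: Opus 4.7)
The plan is a two-step reduction. First I would replace the cosets $xH$ and $yK$ by the conjugate subgroups $A := xHx^{-1}$ and $B := yKy^{-1}$. By left invariance, for every $h \in H$ we have $d(xh, xhx^{-1}) = d(e,(xh)^{-1}xhx^{-1}) = d(e, x^{-1}) = d(e, x)$, the last equality coming from the symmetry of $d$ together with left invariance applied to the pair $(e, x^{-1})$. Hence the Hausdorff distance between $xH$ and $A$ is at most $d(e,x)$, and analogously for $yK$ and $B$. Setting $L_0 := L + \max\bigl(d(e,x), d(e,y)\bigr)$ gives
\[
\nbd{xH}{L} \cap \nbd{yK}{L} \subseteq \nbd{A}{L_0} \cap \nbd{B}{L_0},
\]
so it suffices to prove the following self-contained claim: for any two subgroups $A, B \le G$ and any $L_0 > 0$ there is a constant $L''$ depending on $A$, $B$, $L_0$, and $d$ such that
\[
\nbd{A}{L_0} \cap \nbd{B}{L_0} \subseteq \nbd{A \cap B}{L''}.
\]

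To prove the claim, I would exploit properness of $d$ to reduce to a finite bookkeeping problem. Given $g \in \nbd{A}{L_0} \cap \nbd{B}{L_0}$, write $g = a w_1 = b w_2$ with $a \in A$, $b \in B$, and $d(e,w_i) < L_0$. Then $b^{-1} a = w_2 w_1^{-1}$ lies in the set $BA \cap \ball{e}{2L_0}$, which is finite because $d$ is proper. Enumerate its elements as $g_1, \dots, g_m$, and for each $g_i$ that is realized by some pair in $A \times B$ fix once and for all a representative $(a_i, b_i) \in A \times B$ with $b_i^{-1} a_i = g_i$. A direct manipulation shows that any other realizing pair $(a, b)$ with $b^{-1} a = g_i$ has the form $(c a_i, c b_i)$ for the unique element $c := a a_i^{-1} = b b_i^{-1}$, which automatically lies in $A \cap B$. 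Applying this to our $g$ and using left invariance,
\[
d(g, c) = d(c a_i w_1, c) = d(e, a_i w_1) \le d(e, a_i) + d(e, w_1) < M + L_0,
\]
where $M := \max_i d(e, a_i)$. This places $g$ within distance $M + L_0$ of $A \cap B = xHx^{-1} \cap yKy^{-1}$, giving the proposition with $L' := M + L_0$.

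The only delicate point is that the representatives $(a_i, b_i)$ must be chosen once and for all, independently of the particular $g$, so that the bound $M$ is a constant rather than varying with $g$. This is legitimate precisely because properness of $d$ makes the ball $\ball{e}{2L_0}$ finite, leaving only finitely many possible values $g_i$ and hence only finitely many representatives to choose. Everything else is routine triangle-inequality bookkeeping with left invariance.
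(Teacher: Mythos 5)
Your proof is correct, and it is essentially the paper's argument: both hinge on properness making the bounded ``offset'' elements lie in a finite set, after which two points realizing the same offsets differ by an element of $xHx^{-1} \cap yKy^{-1}$. The only difference is presentational---you argue directly with representatives fixed in advance (after first replacing the cosets by the conjugate subgroups), while the paper runs the same pigeonhole idea as a proof by contradiction along a sequence, passing to a subsequence on which the offsets are constant.
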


\begin{proof}
If there is no such $L'$, then there is a sequence $(z_i)$ in~$G$ so that
$z_i$ is in the $L$--neighborhood of both $xH$ and $yK$,
but $i < d(z_i, xHx^{-1} \cap yKy^{-1})$ for each~$i$.
It follows that $z_i = x h_i p_i = y k_i q_i$
for some $h_i\in H$, $k_i \in K$ and $p_i,q_i \in G$
with $d(1,p_i)$ and $d(1,q_i)$ both less than~$L$.
Since the ball of radius $L$ in $(G,d)$ is finite,
we can pass to a subsequence in which $p_i$
and $q_i$ are constants $p$ and $q$.
Then for each $i$ we can express $z_i = x h_i p = y k_i q$.
Therefore
\[
   z_i z_1^{-1} = x h_i h_1^{-1} x^{-1} = y k_i k_1^{-1} y^{-1}
     \in xHx^{-1} \cap yKy^{-1}.
\]
It follows that the distance between $z_i$
and $xHx^{-1} \cap yKy^{-1}$ is at most
$d(1,z_1)$ for all~$i$, contradicting our choice of $(x_i)$.
\end{proof}

\begin{cor}\label{cor:Intersections}
Let $H$ and $K$ be relatively quasiconvex subgroups of a relatively
hyperbolic group $(G,\PP)$.
Then $H \cap K$ is relatively quasiconvex in $(G,\PP)$.
\end{cor}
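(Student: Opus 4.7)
The plan is to use characterization (QC-5) together with Proposition~\ref{prop:CloseCosets}. Fix a finite relative generating set $\Set{S}$ for $(G,\PP)$ and a proper, left invariant metric $d$ on $G$; by Proposition~\ref{prop:QC45} the definition (QC-5) is independent of these choices, so it suffices to verify the defining condition for $H \cap K$ with respect to this fixed data. Apply (QC-5) to $H$ and to $K$ to obtain constants $\kappa_H$ and $\kappa_K$, and set $\kappa := \max\{\kappa_H,\kappa_K\}$.

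Now let $\bar c$ be an arbitrary geodesic in $\Cayley(G,\Set{S} \cup \Set{P})$ whose endpoints lie in $H \cap K$. Since both endpoints lie in $H$, (QC-5) applied to $H$ tells us that every vertex $v$ of $\bar c$ satisfies $d(v,H) \le \kappa$. Similarly, since both endpoints lie in $K$, every vertex $v$ of $\bar c$ satisfies $d(v,K) \le \kappa$. Consequently
\[
   v \in \nbd{H}{\kappa+1} \cap \nbd{K}{\kappa+1}
\]
in the metric space $(G,d)$.

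Apply Proposition~\ref{prop:CloseCosets} with $x = y = 1$, $xH = H$, $yK = K$, and $L = \kappa+1$. This produces a constant $L' = L'(G,d,H,K)$, depending only on the ambient data (not on the particular geodesic $\bar c$ or its endpoints), such that
\[
   \nbd{H}{\kappa+1} \cap \nbd{K}{\kappa+1} \subseteq \nbd{H \cap K}{L'}.
\]
Thus every vertex $v$ of $\bar c$ lies within $d$-distance $L'$ of $H \cap K$. Since $\bar c$ was an arbitrary geodesic of $\Cayley(G,\Set{S} \cup \Set{P})$ joining points of $H \cap K$, this verifies (QC-5) for $H \cap K$ with constant $L'$, so $H \cap K$ is relatively quasiconvex.

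There is no real obstacle here: all of the difficulty is already absorbed into Proposition~\ref{prop:CloseCosets} (which packages the ``close-to-two-cosets implies close-to-the-intersection'' principle using properness and left invariance of $d$) and into the equivalence of (QC-5) with the other definitions. The only thing to be careful about is to fix a single proper left invariant metric $d$ on $G$ before invoking (QC-5) for $H$, for $K$, and for the conclusion about $H \cap K$; once this is done the argument is a one-line application of Proposition~\ref{prop:CloseCosets}.
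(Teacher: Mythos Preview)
Your proof is correct and follows essentially the same approach as the paper: fix $\Set{S}$ and $d$, apply (QC-5) to $H$ and $K$ separately to see that vertices of a relative geodesic with endpoints in $H\cap K$ lie $d$--close to both $H$ and $K$, and then invoke Proposition~\ref{prop:CloseCosets} (with $x=y=1$) to conclude they lie $d$--close to $H\cap K$. The only difference is that you spell out a few details (the choice of $\kappa=\max\{\kappa_H,\kappa_K\}$ and the use of $\kappa+1$ to get into the open neighborhoods) that the paper leaves implicit.
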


\begin{proof}
Choose a finite relative generating set $\Set{S}$ for $(G,\PP)$
and a proper, left invariant metric $d$ for $G$.
Consider the Cayley graph $\bar\Gamma:= \Cayley(G,\Set{S} \cup \Set{P})$.
If $c$ is a geodesic in $\bar\Gamma$ with both endpoints in $H \cap K$,
then each vertex $v$ of $c$ lies within a uniformly bounded $d$--distance
of both $H$ and $K$ by (QC-5).
Therefore, by Proposition~\ref{prop:CloseCosets}
we also have a uniform bound on the distance $d(v,H \cap K)$.
Thus $H \cap K$ satisfies (QC-5) as well.
\end{proof}

Theorem~\ref{thm:RelQCisRelHyp} and Corollary~\ref{cor:Intersections}
together complete the proof of Theorem~\ref{thm:QCBasicProps}.
The proof of Corollary~\ref{cor:GeomFiniteManifolds} is
now immediate.

\begin{proof}[Proof of Corollary~\ref{cor:GeomFiniteManifolds}]
Let $H$ be a subgroup of a geometrically finite group $G \le \Isom(X)$,
where $X$ is a finite volume manifold with pinched negative curvature.
The first assertion of the Corollary follows from
Definition~(QC-1) for relative quasiconvexity.
The second assertion is an immediate consequence of
Corollary~\ref{cor:Intersections}.
\end{proof}

The notion of a strongly relatively quasiconvex subgroup was introduced by Osin
in \cite[Section~4.2]{Osin06} in the special case that $G$ is finitely generated.  Osin's results about such subgroups extend easily to the general case.
We give the definition and then list several basic results about
strongly relatively hyperbolic subgroups.

\begin{defn}
Let $(G,\PP)$ be relatively hyperbolic.
A subgroup $H \le G$ is \emph{strongly relatively quasiconvex}
if $H$ is relatively quasiconvex and the induced peripheral structure
$\mathbb{O}$ on $H$ is empty.
In other words, the subgroup $H \cap gPg^{-1}$ is finite
for all $g \in G$ and $P \in \PP$.
\end{defn}

\begin{thm}[See Theorem~4.16, \cite{Osin06}]
\label{thm:StrongQCisHyp}
If $H$ is strongly relatively quasiconvex in $(G,\PP)$ then $H$
is finitely generated and word hyperbolic.
\end{thm}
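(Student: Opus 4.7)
The plan is to read off the theorem as an immediate corollary of Theorem~\ref{thm:RelQCisRelHyp} and the equivalence of the six definitions of relative hyperbolicity (Theorem~\ref{thm:RelHypEquivalent}), combined with the observation that relative hyperbolicity with respect to the \emph{empty} peripheral family is the same as ordinary word hyperbolicity.

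First I would apply Theorem~\ref{thm:RelQCisRelHyp} to produce an induced peripheral structure $\mathbb{O}$ on $H$ such that $(H,\mathbb{O})$ is relatively hyperbolic. Recall that the collection $\bar{\mathbb{O}}$ is defined to consist only of those intersections $H \cap gPg^{-1}$ that are \emph{infinite}. The hypothesis that $H$ is strongly relatively quasiconvex says precisely that every such intersection is finite, so $\bar{\mathbb{O}} = \emptyset$, and therefore $\mathbb{O} = \emptyset$.

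Next I would invoke the equivalence of definitions (RH-1)--(RH-6) for the pair $(H,\emptyset)$. It is most transparent to use (RH-3): the pair $(H,\emptyset)$ acts properly discontinuously on a proper $\delta$--hyperbolic space $X$ together with an $H$--equivariant union of horoballs $U$ centered at the parabolic points, such that $H$ acts cocompactly on $X - U$. But since $\mathbb{O}$ is empty there are no parabolic points, hence $U = \emptyset$ and $H$ acts properly discontinuously and cocompactly by isometries on the entire proper $\delta$--hyperbolic space $X$. The \v{S}varc--Milnor lemma then furnishes a finite generating set $\Set{S}$ for $H$ together with a quasi-isometry $\Cayley(H,\Set{S}) \to X$. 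Since $X$ is $\delta$--hyperbolic and hyperbolicity of geodesic spaces is a quasi-isometry invariant, $\Cayley(H,\Set{S})$ is hyperbolic, i.e.\ $H$ is finitely generated and word hyperbolic.

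There is no genuine obstacle here; the only thing to double-check is the bookkeeping about empty peripheral families, which one could equally verify through (RH-5): a finite relative generating set for $(H,\emptyset)$ is just a finite generating set for $H$ in the traditional sense, the coned-off Cayley graph $\hat\Gamma(H,\emptyset,\Set{S})$ coincides with $\Cayley(H,\Set{S})$, the Bounded Coset Penetration condition is vacuous in the absence of cones, and the $\delta$--hyperbolicity required by (RH-5) is exactly word hyperbolicity of $H$.
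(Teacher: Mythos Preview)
Your proposal is correct and follows exactly the same approach as the paper. The paper's proof is a single sentence---``The result is immediate, since $(H,\emptyset)$ is relatively hyperbolic''---and you have simply unpacked the implicit step (via Theorem~\ref{thm:RelQCisRelHyp}, the definition of strongly relatively quasiconvex, and the observation that relative hyperbolicity with empty peripheral structure reduces to ordinary word hyperbolicity through (RH-3) or (RH-5)) in more detail than the paper chose to.
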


\begin{proof}
The result is immediate, since $(H,\emptyset)$ is relatively hyperbolic.
\end{proof}

We also obtain the following result, proved by Osin 
in the case that $G$ is finitely generated \cite[Proposition~4.18]{Osin06}.
(We remark that Osin erroneously claims to have proven
Corollary~\ref{cor:Intersections},
although his proof actually gives the result below.)

\begin{thm}
Let $H$ and $K$ be subgroups of a relatively hyperbolic group $(G,\PP)$.
If $H$ is strongly relatively quasiconvex and $K$ is relatively
quasiconvex, then $H \cap K$ is strongly relatively quasiconvex.
\end{thm}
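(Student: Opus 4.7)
The plan is to combine the intersection result for relatively quasiconvex subgroups (Corollary~\ref{cor:Intersections}) with the definition of strong relative quasiconvexity. There are two things to verify for $H \cap K$: first, that it is relatively quasiconvex in $(G,\PP)$; and second, that its induced peripheral structure is empty, i.e., that $(H \cap K) \cap gPg^{-1}$ is finite for every $g \in G$ and every $P \in \PP$.

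The first verification is immediate from Corollary~\ref{cor:Intersections}, since both $H$ and $K$ are relatively quasiconvex by hypothesis. So the real content is the second verification, but this is also essentially automatic. For any $g\in G$ and $P \in \PP$ we have the set-theoretic inclusion
\[
   (H \cap K) \cap gPg^{-1} \;\subseteq\; H \cap gPg^{-1},
\]
and the right-hand side is finite because $H$ is strongly relatively quasiconvex. A subgroup of a finite group is finite, so $(H \cap K) \cap gPg^{-1}$ is finite as required.

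There is no real obstacle here: the entire argument is a one-line containment together with a citation of Corollary~\ref{cor:Intersections}. Note in particular that no hypothesis on $K$ beyond relative quasiconvexity is needed, nor does one need to appeal to Theorem~\ref{thm:RelQCisRelHyp} to describe the induced peripheral structure on $H \cap K$ explicitly; the membership test for the peripheral structure being empty is checked directly against $G$ and $\PP$.
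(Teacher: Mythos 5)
Your proof is correct. It verifies both clauses of the definition directly: relative quasiconvexity of $H \cap K$ comes from Corollary~\ref{cor:Intersections}, and emptiness of the induced peripheral structure comes from the containment $(H \cap K) \cap gPg^{-1} \subseteq H \cap gPg^{-1}$, the right-hand side being finite because $H$ is strongly relatively quasiconvex. The paper's proof is the same observation packaged differently: it regards $H \cap K$ as a relatively quasiconvex subgroup of the pair $(H,\emptyset)$ --- which implicitly invokes Theorem~\ref{thm:RelQCisRelHyp} (so that $(H,\emptyset)$ is relatively hyperbolic) together with Corollaries \ref{cor:Intersections} and~\ref{cor:NestedQC} --- and concludes that its induced peripheral structure must be empty. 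Your version checks the membership test directly against $G$ and $\PP$, so it is more self-contained (only Corollary~\ref{cor:Intersections} and the definition are needed, and no appeal to the nested-quasiconvexity machinery), while the paper's phrasing trades that for brevity given the results already established; mathematically the two arguments amount to the same one-line containment.
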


\begin{proof}
Since $H \cap K$ is a relatively quasiconvex subgroup of $(H,\emptyset)$,
its induced peripheral structure must be empty.
\end{proof}

The following
theorem characterizes strongly relatively quasiconvex
subgroups in several ways.

\begin{thm}
Let $(G,\PP)$ be a relatively hyperbolic group with finite relative generating
set $\Set{S}$.
Choose a cusp uniform action of $(G,\PP)$ on a $\delta$--hyperbolic space
$(X,\rho)$.
If $H$ is a subgroup of $G$,
then the following are equivalent.
\begin{enumerate}
\item \label{item:StrongRelQC}
$H$ is strongly relatively quasiconvex in $(G,\PP)$.
\item \label{item:UniformConvergenceGroup}
$H$ acts on $\Lambda H \subseteq \boundary X$ as a uniform convergence
group; i.e., the action of $H$ on the space of distinct
triples of points of $\Lambda H$ is proper and cocompact.
\item\label{item:AllConicalLimits}
The action of $H$ on $\Lambda H \subseteq \boundary X$
is a convergence group action, and every point of $\Lambda H$
is a conical limit.
\item\label{item:CocompactOnJoin}
Either $H$ is finite, or
the action of $H$ on $\join(\Lambda X) \subseteq X$ is cocompact.
\item\label{item:QuasiconvexinCuspedSpace}
For each basepoint $x \in X$ the orbit $Hx \subseteq X$ is quasiconvex.
\item \label{item:QIinCuspedSpace}
$H$ is generated by a finite set $\Set{T}$ and for each basepoint $x \in X$
the orbit map $h \mapsto h(x)$ induces a quasi-isometric embedding
\[
   \Cayley(H,\Set{T}) \to (X,\rho).
\]
\item \label{item:QIinRelCayley}
$H$ is generated by a finite set $\Set{T}$
and the inclusion $H \inclusion G$ induces a quasi-isometric embedding
\[
   \Cayley(H,\Set{T}) \to \Cayley(G,\Set{S} \cup \Set{P}).
\]
\end{enumerate}
\end{thm}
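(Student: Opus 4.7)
The plan is to use condition (1) as a hub, connecting the dynamical characterizations (2)--(3), the hyperbolic-geometric characterizations (4)--(6), and the combinatorial characterization (7). First I would establish $(1) \Leftrightarrow (2) \Leftrightarrow (3)$ using (QC-1): the induced action of $H$ on $\Lambda H$ is a geometrically finite convergence action, and strong relative quasiconvexity ($H \cap gPg^{-1}$ finite for every $g, P$) translates precisely to the absence of parabolic fixed points in $\Lambda H$. Tukia's theorem that conical and parabolic points are mutually exclusive, combined with Bowditch's characterization of uniform convergence group actions, identifies this common condition with (2) and (3).

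For $(1) \Leftrightarrow (4)$, (QC-2) furnishes $H$ (if infinite) with a cusp uniform action on a space quasi-isometric to $\join(\Lambda H)$; since strong relative quasiconvexity forbids parabolic subgroups, there are no horoballs to remove and the action is simply cocompact. Then $(4) \Rightarrow (5)$ follows from $\kappa$-quasiconvexity of $\join(\Lambda H)$ in $X$ (Remark~\ref{rem:join}), since cocompactness on $\join(\Lambda H)$ makes $Hx$ quasi-dense there and hence quasiconvex in $X$. For the closure $(5) \Rightarrow (1)$, I argue by contrapositive: if $H \cap gPg^{-1}$ were infinite, its orbit would lie on a horocycle inside the horoball $B$ stabilized by $gPg^{-1}$, and $\rho$-geodesics between widely separated horocycle points would dive to logarithmic depth in $B$. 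However, by properness of the $G$-action, only finitely many $H$-elements outside $gPg^{-1}$ send $x$ into any bounded-depth shell of $B$, so $Hx$ stays at bounded depth in $B$, contradicting quasiconvexity of $Hx$.

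The equivalence $(5) \Leftrightarrow (6)$ is a standard Milnor--\v{S}varc argument in $\delta$-hyperbolic spaces: take $\Set{T} := \set{h \in H}{\rho(x, hx) \le R}$ for $R$ sufficiently large relative to $\delta$ and the quasiconvexity constant, and subdivide geodesics in the quasi-convex hull of $Hx$ to show $\Set{T}$ generates $H$ with word length comparable to $\rho$-distance between orbit points. For $(7) \Rightarrow (6)$ the upper bound comes from $\rho(hx, h'x) \le |h^{-1}h'|_{\Set{T}} \cdot \max_{t \in \Set{T}} \rho(x, tx)$, and the lower bound from $\rho \ge \hat\rho$ together with the QI embedding of (7) composed with the quasi-isometry from Proposition~\ref{prop:ElectricQIRelCayley}. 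The reverse direction $(6) \Rightarrow (7)$ requires upgrading a QI embedding in $\rho$ to one in the coarser $\hat\rho$: after choosing horoballs separated by a $\rho$-distance at least some $r$, invoke the already-established equivalence of (6) with (1). The finite intersection $H \cap gPg^{-1}$ together with quasiconvexity of $Hx$ bounds the diameter of $Hx \cap \nbd{B}{\kappa}$ by some $M$ independent of the horoball $B$; consequently each $\rho$-geodesic between orbit points enters any single horoball only in a segment of length at most $M$, and the total electric savings along such a geodesic are bounded by $(M/r) \cdot \rho(hx, h'x)$. Choosing $r > M$ gives $\hat\rho \ge (1 - M/r)\rho$ along orbit points, transferring the lower bound.

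The main obstacle is the upgrade $(6) \Rightarrow (7)$, where the lower estimate cannot be obtained formally from the hypothesis alone but requires knowing that orbit points do not linger deep in horoballs. That fact rests on the finite-intersection clause built into strong relative quasiconvexity. Consequently the proof cannot proceed as a strict cycle; one must first close the equivalence $(1) \Leftrightarrow \cdots \Leftrightarrow (5)$ in order to unlock the finite-intersection hypothesis for the metric comparison underlying $(6) \Leftrightarrow (7)$.
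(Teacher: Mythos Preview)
Your overall strategy matches the paper's for the bulk of the implications: $(1)\Leftrightarrow(2)\Leftrightarrow(3)$ via Tukia and Bowditch, $(1)\Leftrightarrow(4)$ via (QC-2), $(4)\Leftrightarrow(5)$ directly, and $(5)\Leftrightarrow(6)$ by the standard Short/Milnor--\v{S}varc argument. Two points of divergence are worth noting.

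First, for $(5)\Rightarrow(1)$ you argue by contrapositive using horoball depth. This is correct but over-engineered: once you choose the basepoint $x$ in the truncated space $X-U$, the $G$--equivariance of $U$ forces $Hx\subseteq Gx\subseteq X-U$, so $Hx$ never enters any horoball at all. The properness argument about ``bounded-depth shells'' is unnecessary. The paper instead closes this loop by proving $(5)\Rightarrow(4)$ directly (limits of geodesics between orbit points fill out $\join(\Lambda H)$, forcing cocompactness), which is arguably cleaner since it avoids referencing horoballs altogether.

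Second, for $(1)\Leftrightarrow(7)$ the paper simply invokes Osin's \cite[Theorem~4.13]{Osin06}, observing that his proof does not use finite generation of $G$. Your route through the electric metric is a genuine alternative and is more self-contained, but the step ``$Hx\cap\nbd{B}{\kappa}$ has diameter at most $M$ independent of $B$'' needs the same finiteness-of-$H$-orbits argument that appears at the end of the proof of Proposition~\ref{prop:QC23}: translate $B$ by some $h\in H$ so that $hB$ meets a fixed ball about $x$, note that only finitely many horoballs do so, and then use that each such horoball has bounded intersection with $\join(\Lambda H)$ because its center is not in $\Lambda H$. Your sketch gestures at this but does not quite nail down the uniformity in $B$. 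Once that is secured, your estimate $\hat\rho\ge(1-M/r)\rho$ on orbit points goes through.
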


\begin{proof}
The equivalence of
(\ref{item:UniformConvergenceGroup}) and
(\ref{item:AllConicalLimits})
for convergence group actions has been proved by Bowditch and
also by Tukia \cite{Bowditch98TopologicalHyp,Tukia98}.
It is clear that (\ref{item:AllConicalLimits}) is simply definition
(QC-1) with no parabolic subgroups.
Similarly, (\ref{item:CocompactOnJoin}) is simply (QC-2) with no
parabolic subgroups.
Thus the first four conditions are equivalent.

(\ref{item:CocompactOnJoin})
$\Longleftrightarrow$~(\ref{item:QuasiconvexinCuspedSpace}):
Suppose $H$ acts cocompactly on $\join(\Lambda H)$.
Choose $x \in \join(\Lambda H)$.
Since $\join(\Lambda H)$ is quasiconvex, it is clear that any geodesic
joining points of $Hx$ lies near $Hx$.

Conversely, suppose $Hx$ is quasiconvex.
As in the proof of Proposition~\ref{prop:QC23},
each geodesic connecting points of $Hx$ lies near $Hx$,
so that same holds for biinfinite geodesics obtained as limits of such
geodesics.
Every point of $\Lambda H$ is a limit point of $Hx$ by definition.
Thus each pair in $\Lambda H$ is joined by such a geodesic $c$.
Any other geodesic $c'$ joining the same pair is at a Hausdorff
distance $2\delta$ from $c$.
The set $\join(\Lambda H)$ is the union of all such lines $c'$,
so $\join(\Lambda H)$ lies in a uniformly bounded neighborhood of $Hx$.
In other words, $H$ acts cocompactly on $\join(\Lambda H)$.

(\ref{item:QuasiconvexinCuspedSpace})
$\Longleftrightarrow$~(\ref{item:QIinCuspedSpace}):
This equivalence is elementary and well-known for subgroups $H$
of a word hyperbolic group $H'$ by work of Short.
Replace $H'$ with $X$ to prove the desired result.

(\ref{item:StrongRelQC}) $\Longleftrightarrow$~(\ref{item:QIinRelCayley}):
This equivalence is proved by Osin in the case that $\Set{S}$
generates $G$ \cite[Theorem~4.13]{Osin06}.
However Osin's proof never uses this hypothesis,
so it extends without change to the present setting
once we replace $d_{\Set{S}}$ with a proper, left invariant metric $d$
on $G$.
(The forward implication also follows from Theorem~\ref{thm:QCEmbedding}
below.)
\end{proof}

\section{Geometric properties of subgroup inclusion}
\label{sec:SubgroupInclusion}

In this section, we examine the geometry of the
inclusion $H \inclusion G$ where $H$ is relatively quasiconvex in $(G,\PP)$.
Theorem~\ref{thm:QCEmbedding} shows that the relative Cayley graph of $H$
embeds quasi-isometrically into the relative Cayley graph of $G$.
In Theorem~\ref{thm:Distortion} we compute the distortion of $H$ in $G$
in the case when $H$ and $G$ are both finitely generated.
We finish the section with a proof of Corollary~\ref{cor:GeomFiniteKleinian}.

If $(G,\PP)$ is relatively hyperbolic, recall that $\mathcal{P}$ 
is the disjoint union
\[
   \mathcal{P} := \coprod_{P \in \PP} (\tilde{P} - \{1\}),
\]
where $\tilde{P}$ is an abstract group isomorphic to $P$.
If $H$ is relatively quasiconvex in $(G,\PP)$ with induced peripheral structure
$\mathbb{O}$, then the set $\mathcal{O}$ is defined analogously
to $\mathcal{P}$.

\begin{thm}
\label{thm:QCEmbedding}
Let $(G,\PP)$ be relatively hyperbolic, and let $H$ be a relatively
quasiconvex subgroup with induced peripheral structure $\mathbb{O}$.
Choose finite relative generating sets $\Set{S}$ and $\Set{T}$ for $(G,\PP)$
and $(H,\mathbb{O})$ respectively.
Then the inclusion $H \inclusion G$ induces a
quasi-isometric embedding
\[
  f \colon (H,d_{\Set{T} \cup \Set{O}}) \to (G,d_{\Set{S} \cup \Set{P}}).
\]
%
\end{thm}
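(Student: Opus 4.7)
The plan is to verify both Lipschitz inequalities for the map $f$. For the upper bound $d_{\Set{S} \cup \Set{P}}(1,h) \le L\, d_{\Set{T} \cup \Set{O}}(1,h)$, it suffices to bound the $\Set{S} \cup \Set{P}$-length of each element of $\Set{T} \cup \Set{O}$. The elements of the finite set $\Set{T}$ lie in $G$ and so have bounded length. Each $\tilde{o} \in \Set{O}$ corresponds to some $o \in O$ with $O = H \cap g_O P g_O^{-1} \in \mathbb{O}$ for fixed $g_O \in G$ and $P \in \PP$, so $o = g_O p g_O^{-1}$ for some $p \in P$, giving $|o|_{\Set{S}\cup\Set{P}} \le 2 |g_O|_{\Set{S}\cup\Set{P}} + 1$. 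Finiteness of $\mathbb{O}$ then produces $L$.

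For the lower bound, I apply definition (QC-5) to $H$ to obtain a proper left-invariant metric $d$ on $G$ and a constant $\kappa$ such that every vertex of every geodesic in $\Cayley(G, \Set{S} \cup \Set{P})$ between elements of $H$ lies within $d$-distance $\kappa$ of $H$. Given $h \in H$ and such a geodesic $\bar{c} = v_0 v_1 \cdots v_n$ from $1$ to $h$, the finite ball $F := B_d(1,\kappa)$ provides $f_i \in F$ with $h_i := v_i f_i \in H$, where I choose $f_0 = f_n = 1$. It suffices to bound each $d_{\Set{T} \cup \Set{O}}(h_i, h_{i+1})$ by a constant $C$, since then $d_{\Set{T} \cup \Set{O}}(1, h) \le Cn$.

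Expanding $h_i^{-1} h_{i+1} = f_i^{-1} (v_i^{-1} v_{i+1}) f_{i+1}$, I split on the label of the $i$-th edge of $\bar{c}$. If $v_i^{-1} v_{i+1} = s \in \Set{S}$, then $h_i^{-1} h_{i+1}$ lies in the finite set $F^{-1} \Set{S} F \cap H$, whose elements have uniformly bounded $\Set{T} \cup \Set{O}$-norm. If $v_i^{-1} v_{i+1} = p \in P$ for some $P \in \PP$, then $h_i^{-1} h_{i+1}$ lies in $H \cap f_i^{-1} P f_{i+1}$. A direct computation shows that, when non-empty, this set is a single left coset $h_0 \cdot (H \cap f_{i+1}^{-1} P f_{i+1})$ in $H$. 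Since $F \times F \times \PP$ is finite, only finitely many such cosets arise; for each, I show every element has bounded $\Set{T} \cup \Set{O}$-norm. If $H \cap f_{i+1}^{-1} P f_{i+1}$ is finite, so is the coset. Otherwise it lies in $\bar{\mathbb{O}}$, and by Theorem~\ref{thm:RelQCisRelHyp} equals $\xi O \xi^{-1}$ for a fixed $\xi \in H$ and $O \in \mathbb{O}$; every coset element then has the form $h_0 \xi \tilde{o} \xi^{-1}$ with $h_0$ a fixed representative and $\tilde{o} \in \tilde{O}$, of $\Set{T} \cup \Set{O}$-norm at most $d_{\Set{T} \cup \Set{O}}(1,h_0) + 2 d_{\Set{T} \cup \Set{O}}(1,\xi) + 1$. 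Maximizing over the finite set of triples yields the required $C$.

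The main obstacle is this peripheral subcase, where the coset $H \cap f_i^{-1} P f_{i+1}$ is typically infinite, so a naive counting bound fails. The decisive observation is that, although infinite, the coset is parametrized in $\Cayley(H, \Set{T} \cup \Set{O})$ by a single $\Set{O}$-generator together with bounded-length conjugation and translation—all drawn from the finite data $F \times F \times \PP \times \mathbb{O}$.
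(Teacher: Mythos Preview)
Your argument is correct and takes a genuinely different route from the paper's proof. The paper works with the cusp uniform action of $(G,\PP)$ on a $\delta$--hyperbolic space $(X,\rho)$: it observes that $\join(\Lambda H)$ is quasiconvex, uses (QC-2) to get a cusp uniform action of $(H,\mathbb{O})$ on a space $Y$ quasi-isometric to $\join(\Lambda H)$, and then invokes Proposition~\ref{prop:ElectricQIRelCayley} to identify both relative Cayley graphs with the corresponding electric spaces, so that the quasi-isometric embedding becomes geometrically evident. Your proof, by contrast, is the direct combinatorial argument in the spirit of Short that the paper explicitly mentions as an alternative: you use (QC-5) to push vertices of a relative geodesic in $G$ onto $H$ via the finite set $F$, and then handle the peripheral edges by recognizing that $H \cap f_i^{-1} P f_{i+1}$ is a single coset of a conjugate of some $O \in \mathbb{O}$, hence parametrized by one $\Set{O}$--letter up to bounded correction. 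Your approach is more self-contained (it avoids the electric machinery and Lemma~\ref{lem:LiftIsQuasigeodesic}), while the paper's approach makes the statement transparent once the electric quasi-isometries are in hand and dovetails with the rest of Section~\ref{sec:QCEquivalence}.
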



In principle, one could prove the theorem by modifying
Short's original proof that quasiconvex subgroups of a hyperbolic group
are finitely generated and undistorted \cite{Short91}.
However, it seems more direct to use electric geometry
since in that setting the quasi-isometric embedding is geometrically
obvious.

\begin{proof}
Let $(G,\PP)$ have a cusp uniform action on $(X,\rho)$.
Recall that $Y':=\join(\Lambda H) \subseteq X$
is $\kappa$--quasiconvex for some $\kappa>0$.
By (QC-2), we know that $(H,\mathbb{O})$ has a cusp uniform action
on a geodesic space $Y$ that is $H$--equivariantly quasi-isometric
to $Y'$ endowed with the subspace metric $\rho$.
By abuse of notation we will also use $\rho$ to refer to the metric on $Y$.

Choose a truncated space $X-U$ for $G$ such that the only horoballs
of $U$ meeting $Y'$ are those centered at parabolic points of $H$.
Pull $X-U$ back to $Y$ to get an induced truncated space $Y-V$ for $H$.
Suppose the horoballs of $U$ and also of $V$ are pairwise separated by
at least a distance $r$,
where $r$ is given by Lemma~\ref{lem:LiftIsQuasigeodesic}.
Let $\hat\rho$ denote the electric metric associated to $(X,U)$,
and also the electric metric associated to $(Y,V)$.

For any semipolygonal relative geodesic $(\gamma,\alpha)$ in $(X,U)$
connecting two points of $\join(\Lambda H)$, the path $\gamma$ is
a quasigeodesic of $(X,\rho)$ by Lemma~\ref{lem:LiftIsQuasigeodesic}.
Thus by the Morse Lemma and the quasiconvexity
of $\join(\Lambda H)$, we see that $\gamma$ lies in the $L$--neighborhood
of $\join(\Lambda H)$ for some $L$ depending only on $\kappa$ and
the choice of $U$.
It follows that $(Y,\hat\rho)$ is quasi-isometric to
$\join(\Lambda H)$ with the electric metric
$\hat\rho$ induced as a subspace of $X$.
Therefore the map $Y \to X$ induces an $H$--equivariant
quasi-isometric embedding $(Y,\hat\rho) \to (X,\hat\rho)$.

Applying Proposition~\ref{prop:ElectricQIRelCayley} to both $Y$ and $X$,
gives equivariant quasi-isometries
\[
   (H,d_{\Set{T} \cup \Set{O}}) \to (Y,\hat\rho)
   \quad \text{and} \quad
   (G,d_{\Set{S} \cup \Set{P}}) \to (X,\hat\rho).
\]
Thus the inclusion $H \inclusion G$
induces a quasi-isometric embedding
\[
   f \colon (H,d_{\Set{T} \cup \Set{O}}) \to (G,d_{\Set{S} \cup \Set{P}}),
\]
completing the proof.
\end{proof}

Two monotone functions $f,g \colon [0,\infty) \to [0,\infty)$
are said to be \emph{$\simeq$~equivalent} if $f \preceq g$ and $g \preceq f$,
where $f \preceq g$ means that there exists a constant $C>0$ such that
\[
   f(r) \le C \, g(Cr+C) + Cr + C
\]
for all $r \ge 0$.  One extends this equivalence to functions
$f \colon \N \to [0,\infty)$ by extending $f$ to be constant on each interval
$[n,n+1)$.

\begin{defn}[distortion]
If $G$ is a group with finite generating set $\Set{S}$ and $H$ is a subgroup
with finite generating set $\Set{T}$, the \emph{distortion}
of $(H,\Set{T})$ in $(G,\Set{S})$ is 
the function
\[
   \Delta_H^G(n) := \max \,\bigset{\abs{h}_{\Set{T}}}{\text{$h \in H$
              and $\abs{h}_{\Set{S}} \le n$}}.
\]
Up to $\simeq$~equivalence, this function does not depend on the choice
of the finite generating sets $\Set{S}$ and $\Set{T}$.
A subgroup $H \le G$ is \emph{undistorted} if and only if $\Delta_H^G \simeq n$.
\end{defn}

\begin{defn}
A function $f \colon \N \to \N$ is \emph{superadditive}
if
\[
   f(a+b) \ge f(a) + f(b) \quad \text{for all $a,b \in \N$.}
\]
The \emph{superadditive closure} of a function $f \colon \N \to \N$
is the smallest superadditive function $\bar{f}$ such that $\bar{f}(n) \ge f(n)$
for all $n \in \N$.
The superadditive closure is given by the formula
\[
   \bar{f}(n) := \max \,\bigset{f(n_1) + \dotsb + f(n_r)}{r \ge 1 \text{ and }
       n_1 + \dotsb + n_r = n}.
\]
See Brick \cite{Brick93} and Guba--Sapir \cite{GubaSapir99} for more information about superadditive functions.
\end{defn}

The following lemma is useful when passing from the peripheral subgroups
of $G$ to the induced peripheral subgroups of $H$.

\begin{lem}
\label{lem:InducedPeripherals}
Let $(G,\PP)$ be relatively hyperbolic, and suppose $H$ is a relatively 
quasiconvex subgroup with induced peripheral structure $\mathbb{O}$.
Fix a proper, left invariant metric $d$ on $G$.
For each constant $L>0$ there are constants $D=D(L)$ and $L' = L'(L)$
such that the following holds.

Let $\mathcal{A}$ be the finite collection of peripheral left cosets $gP$ of
$(G,\PP)$ such that $H \cap gPg^{-1} \in \mathbb{O}$.
Let $\mathcal{B}_L$ be the finite collection of peripheral left cosets $gP$
that intersect the ball $\ball{1}{L}$ in $(G,d)$ such that
$H \cap gPg^{-1}$ is finite.

Suppose $g'P$ is a peripheral left coset of $(G,\PP)$ such that $d(H,g'P) < L$.
Then $g'P = hgP$ for some $h \in H$ and some
$gP \in \mathcal{A} \cup \mathcal{B}_L$, and
\[
   \nbd{H}{L} \cap \nbd{g'P}{L} \subseteq \bignbd{h(H \cap gPg^{-1})}{L'}.
\]

Furthermore, if $gP \in \mathcal{B}_L$ then
\[
   \diam \bigl(\nbd{H}{L} \cap \nbd{g'P}{L} \bigr) < D.
\]
\end{lem}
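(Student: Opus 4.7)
The plan is to write $g'P = hgP$ for some $h \in H$ and some $gP \in \mathcal{A} \cup \mathcal{B}_L$, and then to apply Proposition \ref{prop:CloseCosets} to the pair $(H, gP)$. Both $\mathcal{A}$ and $\mathcal{B}_L$ are finite: the former because each $O \in \mathbb{O}$ determines the coset $gP$ with $H \cap gPg^{-1} = O$ uniquely, and the latter because the $d$-ball $\ball{1}{L}$ is finite by properness. Taking a maximum of the coset-dependent constants from Proposition \ref{prop:CloseCosets} over this finite set will give the uniform $L'$.

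I split into cases by whether $H \cap g'Pg'^{-1}$ is infinite or finite. In the infinite case, this intersection is $H$-conjugate to some $O \in \mathbb{O}$, say $H \cap g'Pg'^{-1} = hOh^{-1}$ with $h \in H$. Choosing $g_0P_0 \in \mathcal{A}$ with $O = H \cap g_0P_0 g_0^{-1}$, we have $H \cap g'Pg'^{-1} = H \cap (hg_0)P_0(hg_0)^{-1}$. Since this common subgroup is infinite, and distinct maximal parabolic subgroups of a relatively hyperbolic group have finite intersection, it follows that $g'Pg'^{-1} = (hg_0)P_0(hg_0)^{-1}$. Hence $P = P_0$ (as $\PP$ is a set of conjugacy class representatives), and self-normalization of maximal parabolic subgroups forces $(hg_0)^{-1}g' \in P$, so $g'P = hg_0P$ with $g_0P \in \mathcal{A}$. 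In the finite case, the hypothesis $d(H, g'P) < L$ gives some $h \in H$ with $d(1, h^{-1}g'P) < L$ by left invariance. The intersection $H \cap (h^{-1}g'P)(h^{-1}g'P)^{-1} = h^{-1}(H \cap g'Pg'^{-1})h$ is finite, so $gP := h^{-1}g'P$ lies in $\mathcal{B}_L$.

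With $g'P = hgP$ in hand, left invariance of $d$ together with $h \in H$ gives
\[
   h^{-1}\bigl(\nbd{H}{L} \cap \nbd{g'P}{L}\bigr) = \nbd{H}{L} \cap \nbd{gP}{L}.
\]
Proposition \ref{prop:CloseCosets} supplies a constant $L'_0(gP)$ such that this last set is contained in $\nbd{H \cap gPg^{-1}}{L'_0(gP)}$, and we set $L' := \max \bigset{L'_0(gP)}{gP \in \mathcal{A} \cup \mathcal{B}_L}$. Translating by $h$ yields the required inclusion into $\nbd{h(H \cap gPg^{-1})}{L'}$. For the diameter bound, when $gP \in \mathcal{B}_L$ the finite set $H \cap gPg^{-1}$ has bounded diameter, so $\nbd{h(H \cap gPg^{-1})}{L'}$ has diameter at most $\diam(H \cap gPg^{-1}) + 2L'$; taking a maximum over the finite set $\mathcal{B}_L$ gives $D(L)$.

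The main obstacle is the infinite case, where one must recover the actual coset $g'P$ (not merely the maximal parabolic $g'Pg'^{-1}$) as an $H$-translate of a coset in $\mathcal{A}$. This requires two standard but subtle properties of relatively hyperbolic groups — that distinct maximal parabolic subgroups have finite intersection, and that each such subgroup is self-normalizing — in order to pass from information about the conjugate subgroups back to the cosets themselves.
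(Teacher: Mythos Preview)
Your proof is correct and follows the same overall structure as the paper's: reduce $g'P$ to an $H$--translate of a coset in the finite set $\mathcal{A} \cup \mathcal{B}_L$, apply Proposition~\ref{prop:CloseCosets} to each such coset, and take a maximum. The only real difference is in the infinite case. You pass from the conjugate subgroup $H \cap g'Pg'^{-1}$ back to the coset $g'P$ by invoking two structural facts (distinct maximal parabolics have finite intersection, and each maximal parabolic is self-normalizing). The paper avoids this entirely: it first writes $g'P = h_0 gP$ with $gP$ meeting $\ball{1}{L}$ (exactly your move in the finite case, applied uniformly), and then, if $H \cap gPg^{-1}$ is infinite, chooses $h_1 \in H$ with $h_1(H \cap gPg^{-1})h_1^{-1} \in \mathbb{O}$, so that $h_1 gP \in \mathcal{A}$ directly from the definition of $\mathcal{A}$; then $g'P = (h_0 h_1^{-1})(h_1 gP)$. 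Working with cosets throughout means there is no need to recover the coset from the subgroup, so the two facts you flagged as the main obstacle are never used. Your route is valid, just slightly heavier.
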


\begin{proof}
Since $d(H,g'P) < L$, we can express $g'P$ as $h_0gP$
for some $h_0 \in H$
and some coset $gP$ intersecting $\ball{1}{L}$ in $(G,d)$.
If $H \cap gPg^{-1}$ is finite, then $gP \in \mathcal{B}_L$.
In this case, we let $h := h_0$.

On the other hand, if $H \cap gPg^{-1}$ is infinite,
then it is conjugate in $H$ to a
peripheral subgroup of $\mathbb{O}$.
In other words, there is $h_1 \in H$ such that
$H \cap h_1 g P g^{-1} h_1^{-1} \in \mathbb{O}$.
Therefore $h_1 gP = (h_1 h_0^{-1})g'P \in \mathcal{A}$.
In this case, let $h := h_1 h_0^{-1}$.
In either case we have $g'P = hgP$ for some
$gP \in \mathcal{A} \cup \mathcal{B}_L$ as desired.

Since $\mathcal{A} \cup \mathcal{B}_L$ is finite,
Proposition~\ref{prop:CloseCosets} gives is a constant $L'= L'(L)$
such that for every $gP \in \mathcal{A} \cup \mathcal{B}_L$ we have
\[
   \nbd{H}{L} \cap \nbd{gP}{L} \subseteq \nbd{H \cap gPg^{-1}}{L'}.
\]
Translating this statement by $h$ gives
\[
   \nbd{H}{L} \cap \nbd{g'P}{L} \subseteq \bignbd{ h(H \cap gPg^{-1}) }{L'}.
\]

Since $\mathcal{B}_L$ is finite, there is some $D_0 < \infty$
such that for each $gP \in \mathcal{B}_L$ we have
\[
   \diam ( H \cap gPg^{-1}) < D_0.
\]
If we let $D := D_0 + 2L'$, then
\[
   \diam \bigl( \nbd{H}{L} \cap \nbd{g'P}{L} \bigr) < D
\]
whenever $gP \in \mathcal{B}_L$.
\end{proof}

\begin{thm}
\label{thm:Distortion}
Let $(G,\PP)$ be relatively hyperbolic with a finite generating set $\Set{S}$.
Let $H$ be a relatively quasiconvex subgroup with a finite generating
set $\Set{T}$.
Let $\mathbb{O}$ denote the induced peripheral structure on $H$.
For each $O = H \cap gPg^{-1} \in \mathbb{O}$ let $\delta_O$
denote the distortion of the finitely generated group $O$
in the finitely generated group $gPg^{-1}$.
Then the distortion of $H$ in $G$ satisfies
\[
   f \preceq \Delta_H^G  \preceq \bar{f},
\]
where 
\[
   f(n) := \max_{O \in \mathbb{O}} \delta_O(n)
\]
and $\bar{f}$ is the superadditive closure of $f$.
\end{thm}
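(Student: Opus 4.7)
The plan is to prove $f \preceq \Delta_H^G \preceq \bar{f}$ by combining two uniform undistortedness facts — of each $gPg^{-1}$ in $G$ and of each $O \in \mathbb{O}$ in $H$, both from Corollary~\ref{cor:PeripheralUndistorted} applied respectively to $(G,\PP)$ and $(H,\mathbb{O})$ (the latter being relatively hyperbolic by Theorem~\ref{thm:RelQCisRelHyp}) — with the transition-point decomposition of $\Cayley(G,\Set{S})$--geodesics from Section~\ref{sec:QCWordMetric}.

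For the lower bound, fix $O = H \cap gPg^{-1} \in \mathbb{O}$ and for each $n$ choose $o_n \in O$ with $|o_n|_{gPg^{-1}} \le n$ and $|o_n|_O \simeq \delta_O(n)$. Undistortedness of $gPg^{-1}$ in $G$ gives $|o_n|_{\Set{S}} \preceq n$, and undistortedness of $O$ in $H$ gives $|o_n|_{\Set{T}} \succeq \delta_O(n)$. Hence $\Delta_H^G(n) \succeq \delta_O(n)$ for every $O \in \mathbb{O}$, and taking the maximum yields $f \preceq \Delta_H^G$.

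For the upper bound, fix $h \in H$ with $|h|_{\Set{S}} \le n$ and let $c$ be a $\Cayley(G,\Set{S})$--geodesic from $1$ to $h$. Corollary~\ref{cor:RelQCinCayleyGraph} supplies constants $\epsilon,R,\kappa$ so that every $(\epsilon,R)$--transition point of $c$ lies within $\kappa$ of $H$, and Lemma~\ref{lem:DisjointDeepSegments} lets me list the maximal $(\epsilon,R)$--deep components $\bar{c}_1,\dots,\bar{c}_k$ of $c$, each deep in a unique peripheral coset $g_jP_j$ and of length $\ell_j$. Set $\eta_0^+ = 1$, $\eta_{k+1}^- = h$, and for each $j$ pick $\eta_j^-,\eta_j^+ \in H$ within $\kappa$ of the two endpoints of $\bar{c}_j$, giving the telescoping bound
\[
   |h|_{\Set{T}} \le \sum_{j=1}^{k+1}\bigl|(\eta_{j-1}^+)^{-1}\eta_j^-\bigr|_{\Set{T}} + \sum_{j=1}^{k}\bigl|(\eta_j^-)^{-1}\eta_j^+\bigr|_{\Set{T}}.
\]
For each transition subsegment, sampling at spacing $1+2\kappa$ produces consecutive $H$--elements whose $\Set{S}$--differences lie in the finite set $H \cap \ball{1}{1+2\kappa}$, whose elements have uniformly bounded $\Set{T}$--length; this gives a linear bound on each transition contribution, summing to $O(n)$.

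For the deep terms, I apply Lemma~\ref{lem:InducedPeripherals} with $L = \kappa + \epsilon$. In the $\mathcal{B}_L$--case (finite intersection) the bounded diameter $D$ forces both $\ell_j$ and $\bigl|(\eta_j^-)^{-1}\eta_j^+\bigr|_{\Set{T}}$ to be bounded by constants — the latter via finiteness of $H \cap \ball{1}{D}$. In the $\mathcal{A}$--case the lemma gives a decomposition $\eta_j^{\pm} = h_j o_j^{\pm}\epsilon_j^{\pm}$ with $h_j \in H$, $o_j^{\pm} \in O_j$ for some $O_j \in \mathbb{O}$, and $\epsilon_j^{\pm} \in H \cap \ball{1}{L'}$ (a finite set). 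Setting $o = (o_j^-)^{-1}o_j^+ \in O_j$, the identity $(\eta_j^-)^{-1}\eta_j^+ = (\epsilon_j^-)^{-1} o\, \epsilon_j^+$ combined with $|o|_{\Set{S}} \preceq \ell_j$ feeds into three distortion inequalities — undistortedness of $g_j''P_j(g_j'')^{-1}$ in $G$, the definition of $\delta_{O_j}$, and undistortedness of $O_j$ in $H$ — to yield $\bigl|(\eta_j^-)^{-1}\eta_j^+\bigr|_{\Set{T}} \preceq \delta_{O_j}(\ell_j) \le f(\ell_j)$. Summing, since $\sum_j\ell_j \le n$, the definition of the superadditive closure gives $\sum_j f(\ell_j) \le \bar{f}(n)$; and since $\bar{f}(n) \succeq n$, the linear transition and $\mathcal{B}_L$--contributions are absorbed, yielding $\Delta_H^G \preceq \bar{f}$. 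The main obstacle is this $\mathcal{A}$--case estimate, where one must chain the three distortion inequalities carefully so that the $\ell_j$--dependence enters exactly once through $\delta_{O_j}$ while the perturbations $\epsilon_j^{\pm}$ and the conjugators $h_j, g_j''$ contribute only uniform constants; this uniformity is ensured by finiteness of the coset family $\mathcal{A} \cup \mathcal{B}_L$ from Lemma~\ref{lem:InducedPeripherals}.
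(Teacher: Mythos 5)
Your proposal is correct and follows essentially the same route as the paper: the identical lower-bound computation, and for the upper bound the same decomposition of an $\Set{S}$--geodesic into transition and deep components via Corollary~\ref{cor:RelQCinCayleyGraph} and Lemma~\ref{lem:DisjointDeepSegments}, the same use of Lemma~\ref{lem:InducedPeripherals} with $L=\kappa+\epsilon$ to split the $\mathcal{A}$-- and $\mathcal{B}_L$--cases, the same three-step chaining of distortions, and the same superadditivity argument; your per-segment telescoping with sampled $H$--points is just a repackaging of the paper's edge-by-edge rewriting with the elements $h_{i,j}$ and $o_i$. The only discrepancies are harmless constant bookkeeping (e.g.\ the sampled differences lie in $H\cap \ball{1}{1+4\kappa}$ rather than $H\cap\ball{1}{1+2\kappa}$, and similarly $D$ should be enlarged by $2\kappa$), which are absorbed by the $\preceq$ relation.
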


We remark that if $\mathbb{O}$ is empty (i.e., $H$ is strongly relatively
quasiconvex) then $f(n)$ is considered to be identically zero,
which is $\simeq$~equivalent to a linear function.
Thus in the case of a strongly relatively quasiconvex subgroup $H$,
the preceding theorem implies that $H$ is undistorted in $G$.

The outline of the proof is inspired by Short's proof
that quasiconvex subgroups of a finitely generated group
are finitely generated and undistorted \cite{Short91}.

\begin{proof}
First let us see that $f \preceq \Delta_H^G$.
If $\mathbb{O}$ is empty, there is nothing to show.
Thus it suffices to check that $\delta_O \preceq \Delta_H^G$ for each
$O \in \mathbb{O}$.
Choose finite generating sets
$\Set{A}$, $\Set{B}$, $\Set{S}$, and $\Set{T}$ for
$gPg^{-1}$, $O$, $G$, and $H$ respectively
such that $\Set{A} \subseteq \Set{S}$.
Choose $z \in O$ such that
\[
   \abs{z}_{\Set{A}} \le n \quad \text{and} \quad
   \abs{z}_{\Set{B}} = \delta_O(n).
\]
Since the distortion of $O$ in $H$ is linear, we have
\[
   \abs{z}_{\Set{S}} \le n \quad \text{and} \quad
   \abs{z}_{\Set{T}} > (\Delta_O^H)^{-1} \bigl(\delta_O(n) - 1\bigr)
      \simeq \delta_O(n).
\]
Thus $\delta_O \preceq \Delta_H^G$.

We will now consider the less trivial inequality
$\Delta_H^G \preceq \bar{f}$.
Fix a finite generating set $\Set{S}$ for $G$, and
let $\epsilon,R,\kappa$ be the constants given by
Corollary~\ref{cor:RelQCinCayleyGraph}.
Suppose $\abs{h}_{\Set{S}} \le n$, and let $c$ be a geodesic in
$\Cayley(G,\Set{S})$ from $1$ to $h$.
Then the set of $(\epsilon,R)$--transition points of $c$
lies in the $\kappa$--neighborhood of $H$ in $\Cayley(G,\Set{S})$.



The geodesic $c$ is a concatenation of paths
$c_0,c'_1,c_1,\dots,c'_{\ell},c_\ell$
such that each $c_i$ is a component of the set of $(\epsilon,R)$--transition
points of $c$, and each $c'_i$ is a component
of the set of $(\epsilon,R)$--deep points of $c$.
Each path $c'_i$ is $(\epsilon,R)$--deep in a unique peripheral left
coset $g'_i P_i$ by Lemma~\ref{lem:DisjointDeepSegments}.

We will assume without loss of generality that each $c_i$ and each $c'_i$
is a sequence of edges.  Increasing $R$ and $\kappa$ slightly,
we can also assume that each $c_i$ contains at least one edge,
so that $c$ has length at least $\ell+1$; in other words, $\ell+1\le n$.

Lemma~\ref{lem:InducedPeripherals} gives constants
$L'=L'(\kappa + \epsilon)$ and $D= D(\kappa+\epsilon)$ and sets
of cosets $\mathcal{A}$ and $\mathcal{B}_{\kappa+\epsilon}$
such that for each $i$ we have
$g'_i P_i = k_i {g}_i {P}_i$ for some $k_i \in H$ and some
${g}_i {P}_i \in \mathcal{A} \cup \mathcal{B}_{\kappa+\epsilon}$.
Furthermore, if we let $O_i = H \cap g_i P_i g_i^{-1}$, then
in $(G,d_{\Set{S}})$ the endpoints of $c'_i$ lie in 
$\nbd{H}{\kappa} \cap \nbd{g'_i P_i}{\epsilon}$, which is a subset of
$\nbd{k_i O_i}{L'}$.
If $b_i$ is the label on the path $c'_i$, then
we can choose elements $u_i$ and $v_i$ in $G$ such that
$\abs{u_i}_{\Set{S}}$ and $\abs{v_i}_{\Set{S}}$ are less than $L'$
and such that $o_i := u_i b_i v_i^{-1}$ is an element of $O_i$
as shown in Figure~\ref{fig:DistortionPeripheral}.

\begin{figure}[t]
\labellist
\small
\pinlabel $1$ at 15 31
\pinlabel $H$ at 44 5
\pinlabel $k_1g_1P_1$ at 108 105
\pinlabel $b_1$ at 107 77
\pinlabel $u_1$ at 92 61
\pinlabel $v_1$ at 141 61
\pinlabel $o_1$ at 120 26
\pinlabel $k_1O_1$ at 155 23
\pinlabel {$k_\ell g_\ell P_\ell$} at 254 102
\pinlabel $b_\ell$ at 247 77
\pinlabel $u_\ell$ at 211 61
\pinlabel $v_\ell$ at 267 61
\pinlabel $o_\ell$ at 253 26
\pinlabel {$k_\ell O_\ell$} at 290 23
\pinlabel $h$ at 321 31
\endlabellist
\begin{center}
\includegraphics{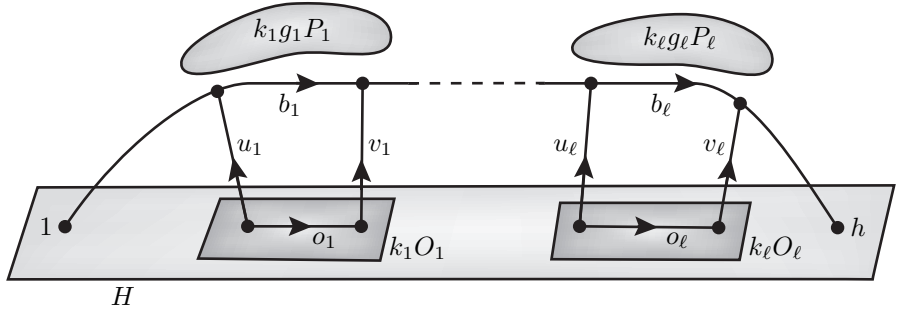}
\end{center}
\caption{The elements $o_i$ lie in the peripheral subgroup $O_i$ of $H$.}
\label{fig:DistortionPeripheral}
\end{figure}

Suppose the edges of $c_i$ are labelled by the sequence
$a_{i,1} \cdots a_{i,n_i}$ of elements of~$\Set{S}$.
Each vertex of $c_i$ lies within an $\Set{S}$--distance $\kappa$ of a vertex
of $H$.
Let $w_{0,0} = w_{\ell,n_\ell} = 1$,
and for each $i=1,\dots,\ell$
let $w_{i-1,n_{i-1}} := u_i$, and $w_{i,0} := v_i$.
Then for each $j=1,\dots,n_i-1$ there exists $w_{i,j} \in G$ 
with $\abs{w_{i,j}}_{\Set{S}} < \kappa$ such that whenever $j = 1,\dots,n_i$
the element
$h_{i,j}:= w_{i,j-1} a_{i,j} w_{i,j}^{-1}$ lies in $H$
as illustrated in Figure~\ref{fig:DistortionTransition}.

\begin{figure}[t]
\labellist
\small
\pinlabel $H$ at 66 7
\pinlabel {$v_i=w_{i,0}$} [r] at 56 64
\pinlabel $w_{i,1}$ [r] at 116 64
\pinlabel $w_{i,{n_i}-1}$ [r] at 197 64
\pinlabel {$w_{i,n_i}=u_{i+1}$} [l] at 254 64
\pinlabel $a_{i,1}$ at 86 96
\pinlabel $a_{i,n_i}$ at 225 96
\pinlabel $h_{i,1}$ at 83 23
\pinlabel $h_{i,n_i}$ at 225 23
\endlabellist
\begin{center}
\includegraphics{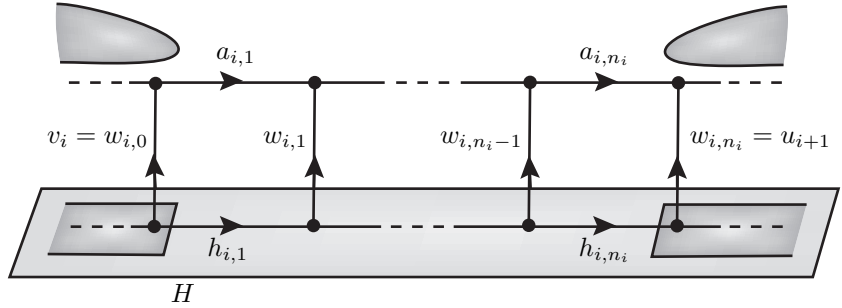}
\end{center}
\caption{The elements $h_{i,j}$ lie in $H$.}
\label{fig:DistortionTransition}
\end{figure}

We now have two decompositions of $h$.  The first in $G$ corresponds to
the labels along the geodesic $c$:
\[
   h = (a_{0,1}\cdots a_{0,n_0})b_1(a_{1,1}\cdots a_{1,n_1})
        \cdots
       b_{\ell}(a_{\ell,1}\cdots a_{\ell,n_\ell}).
\]
The second is a decomposition of $h$ in $H$, that ``tracks close'' to $c$:
\[
   h = (h_{0,1}\cdots h_{0,n_0})o_1(h_{1,1}\cdots h_{1,n_1})
        \cdots
       o_{\ell}(h_{\ell,1}\cdots h_{\ell,n_\ell}).
\]
Since $\abs{w_{i,j}}_{\Set{S}} < \kappa + L'$,
we have $\abs{h_{i,j}}_{\Set{S}} < 2(\kappa + L') + 1$.
If $g_iP_i \in \mathcal{B}_{\kappa+\epsilon}$ for some $i$,
then $\abs{b_i}_{\Set{S}} < D$ by Lemma~\ref{lem:InducedPeripherals}, so that
$\abs{o_i}_{\Set{S}} < 2L' + D$.
Let $B$ denote the finite ball in $(H,d_{\Set{S}})$ centered at $1$ with radius
$2(\kappa + L') + D + 1$,
and choose a finite generating set $\Set{T}$ for $H$
such that $B \subseteq \Set{T}$.
Then we have
\[
   \abs{h}_{\Set{S}} = n_0 + \abs{b_1}_{\Set{S}} + n_1 + \cdots
      + \abs{b_\ell}_{\Set{S}} + n_\ell \le n
\]
and
\[
   \abs{h}_{\Set{T}} \le n_0 + \abs{o_1}_{\Set{T}} + n_1 + \cdots
      + \abs{o_\ell}_{\Set{T}} + n_\ell.
\]
If $g_iP_i \in \mathcal{B}_{\kappa+\epsilon}$ then $\abs{o_i}_{\Set{T}} \le 1$
by our choice of $\Set{T}$.
On the other hand, if $g_iP_i \in \mathcal{A}$, then 
$O_i \in \mathbb{O}$.
Since the distortions of $O_i$ in $H$ and of $g_iP_ig_i^{-1}$ in $G$ are linear,
there is a constant $C$ depending only on $\mathbb{O}$ such that
\begin{align*}
   \abs{o_i}_{\Set{T}}
      &\le C \,\delta_{O_i} \bigl( C \abs{o_i}_{\Set{S}} + C \bigr)+C \\
      &\le C \,\delta_{O_i} \bigl( C 
         \bigl( \abs{u_i}_{\Set{S}}
         + \abs{b_i}_{\Set{S}} + \abs{v_i}_{\Set{S}} \bigr)
         + C \bigr) +C \\
      &< C \,\delta_{O_i} \bigl( C \abs{b_i}_{\Set{S}} + 2CL' + C \bigl) +C \\
      &\le C \, \bar{f} \bigl( C \abs{b_i}_{\Set{S}} + 2CL' + C \bigl) + C 
\end{align*}
Using the superadditivity of $\bar{f}$ and the fact that $\ell< n$, we see that
\begin{align*}
   \abs{h}_{\Set{T}}
      &< \sum_{i=0}^\ell n_i
        + \sum_{i=1}^{\ell}
        C \, \bar{f} \bigl( C \abs{b_i}_{\Set{S}} + 2CL' + C \bigl) + C \\
      &\le n + C \, \bar{f}
      \biggl( \sum_{i=1}^{\ell} C\abs{b_i}_{\Set{S}}
        +2CL' +C \biggr) + C\ell \\
      &\le n + C \, \bar{f} \bigl( Cn + (2CL'+C)\ell \bigr) +C\ell \\
      &\le n + C \, \bar{f} \bigl( (2C + 2CL')n \bigr) + Cn.
\end{align*}
Thus $\Delta_H^G \preceq \bar{f}$ as desired.
\end{proof}

We will now use the previous theorem to complete the proof 
of Corollary~\ref{cor:GeomFiniteKleinian}.

\begin{proof}[Proof of Corollary~\ref{cor:GeomFiniteKleinian}]
Properties (1) and (2) are equivalent by
Corollary~\ref{cor:GeomFiniteManifolds}.

If $H$ is undistorted in $G$, then it is relatively quasiconvex
by Theorem~\ref{thm:UndistortedStatement}.
Recall that since $G$ is a geometrically finite Kleinian group
its maximal parabolic subgroups $P$
are finitely generated and virtually abelian.
Every subgroup $O$ of such a group $P$ is finitely generated
and undistorted.
If $H$ is relatively quasiconvex in $G$ it follows from
Theorem~\ref{thm:Distortion} that $H$ is undistorted in $G$.
Thus (2) and (3) are equivalent.

A geometrically finite Kleinian group $G$ acts properly discontinuously,
cocompactly, and isometrically on a $\CAT(0)$ space $Y$ with isolated flats
(see, for instance, Hruska \cite{HruskaGeometric}).
The author proves in \cite{HruskaGeometric} that
a subgroup $H$ of a $\CAT(0)$ group $G$
is $\CAT(0)$--quasiconvex if and only if $H$ is finitely
generated and undistorted in $G$.
Thus (3) is equivalent to (4).
\end{proof}

\bibliographystyle{alpha}
\bibliography{relqc}

\end{document}